\newtheorem{lemma}{{\sc Lemma}}[section]
\newtheorem{proposition}[lemma]{{\sc Proposition}}
\newtheorem{theorem}[lemma]{{\sc Theorem}}
\newtheorem{remark}[lemma]{{\sc Remark}}
\newtheorem{conjecture}[lemma]{{\sc Conjecture}}
\numberwithin{equation}{section}
\def\Gb{{\mathfrak{b}}}
\def\Gg{{\mathfrak{g}}}
\def\Gh{{\mathfrak{h}}}
\def\Gn{{\mathfrak{n}}}
\def\Gs{{\mathfrak{s}}}
\def\BA{{\mathbb{A}}}
\def\BC{{\mathbb{C}}}
\def\BF{{\mathbb{F}}}
\def\BQ{{\mathbb{Q}}}
\def\BZ{{\mathbb{Z}}}
\def\CB{{\mathcal B}}
\def\CC{{\mathcal C}}
\def\DD{{\mathcal D}}
\def\CO{{\mathcal O}}
\def\CI{{\mathcal I}}
\def\CK{{\mathcal K}}
\def\CL{{\mathcal L}}
\def\CM{{\mathcal M}}
\def\CR{{\mathcal R}}
\def\CV{{\mathcal V}}
\def\CZ{{\mathcal Z}}
\def\ad{\mathop{\rm ad}\nolimits}
\def\Coker{\mathop{\rm Coker}\nolimits}
\def\Comod{\mathop{\rm Comod}\nolimits}
\def\deru{\partial}
\def\End{\mathop{\rm{End}}\nolimits}
\def\For{{\mathop{\rm For}\nolimits}}
\def\Hom{\mathop{\rm Hom}\nolimits}
\def\Ht{\mathop{\rm ht}\nolimits}
\def\id{\mathop{\rm id}\nolimits}
\def\Id{\mathop{\rm Id}\nolimits}
\def\Ind{\mathop{\rm Ind}\nolimits}
\def\Image{\mathop{\rm Im}\nolimits}
\def\Ker{\mathop{\rm Ker\hskip.5pt}\nolimits}
\def\Mod{\mathop{\rm Mod}\nolimits}
\def\res{\mathop{\rm res}\nolimits}
\def\Tor{{\rm{Tor}}}
\begin{document}
\title[quantized flag manifolds]
{Differential operators on quantized flag manifolds at roots of unity II}
\dedicatory{To Etsuro Date on his 60th birthday}
\author{Toshiyuki TANISAKI}
\subjclass[2000]{20G05, 17B37}
\thanks
{
The author was partially supported by the Grants-in-Aid for Scientific Research, Challenging Exploratory Research No.\ 21654005, and
Grants-in-Aid for Scientific Research (C) No.\ 24540026 
from Japan Society for the Promotion of Science.
}
\address{
Department of Mathematics, Osaka City University, 3-3-138, Sugimoto, Sumiyoshi-ku, Osaka, 558-8585 Japan}
\email{tanisaki@sci.osaka-cu.ac.jp}
\begin{abstract}
We formulate a Beilinson-Bernstein type derived equivalence for a quantized enveloping algebra at a root of 1 as a conjecture.
It says that there exists a derived equivalence between the category of modules over a quantized enveloping algebra at a root of 1 with fixed regular Harish-Chandra central character and the category of certain twisted $D$-modules on the corresponding quantized flag manifold.
We show that the proof is reduced to a statement about the (derived) global sections of the ring of differential operators on the quantized flag manifold.
We also give a reformulation of the conjecture in terms of the (derived) induction functor.

\end{abstract}
\maketitle
\setcounter{section}{-1}
\section{Introduction}
\label{sec:Intoro}
\subsection{}
Let $G$ be a connected, simply-connected simple algebraic group over $\BC$, and let $H$ be a maximal torus of $G$.
We denote by $\Gg$ and $\Gh$ the Lie algebras of $G$ and $H$ respectively.
Let $Q$ and $\Lambda$ be the root lattice and the weight lattice respectively.
Let $h_G$ be the Coxeter number of $G$.
We fix an odd integer $\ell>h_G$, which is prime to the order of $\Lambda/Q$ and prime to 3 if $\Gg$ is of type $G_2$, $F_4$, $E_6$, $E_7$, $E_8$, and consider 
the De Concini-Kac type quantized enveloping algebra $U_\zeta$ at $q=\zeta=\exp(2\pi\sqrt{-1}/\ell)$.

In \cite{TI} we started the investigation of the corresponding quantized flag manifold $\CB_\zeta$, which is a non-commutative scheme, and the category of $D$-modules on it.
In view of a general philosophy saying that quantized objects at roots of 1 resemble ordinary objects in positive characteristics, it is natural to pursue analogue of the theory of $D$-modules on the ordinary flag manifolds in positive characteristics due to Bezrukavnikov-Mirkovi\'{c}-Rumynin \cite{BMR}.
Along this line we have established in \cite{TI} certain Azumaya properties of the ring of differential operators on the quantized flag manifold.
The aim of this paper is to investigate an analogue of another main point of \cite{BMR} about the Beilinson-Bernstein type derived equivalence.

\subsection{}
We denote by $\DD_{\CB_\zeta,1}$ the ``sheaf of rings of differential operators''on the quantized flag manifold $\CB_\zeta$.
More generally, for each $t\in H$ we have its twisted analogue denoted by $\DD_{\CB_\zeta,t}$.
It is obtained as the specialization ${\DD}_{\CB_\zeta}\otimes_{\BC[H]}\BC$ of the universally twisted sheaf  ${\DD}_{\CB_\zeta}$ with respect to the ring homomorphism $\BC[H]\to\BC$ corresponding to $t\in H$.

Let $\CB$ be the ordinary flag manifold for $G$.
Then we have a Frobenius morphism $Fr:\CB_\zeta\to\CB$, which is a finite morphism from a non-commutative scheme to an ordinary scheme.
Taking the direct images we obtain sheaves $Fr_*{\DD}_{\CB_\zeta}$, $
Fr_*{\DD}_{\CB_\zeta,t}$\,\,$(t\in H)$ of rings on $\CB$ (in the ordinary sense).
Denote by $\Mod_{coh}(Fr_*{\DD}_{\CB_\zeta,t})$ the category of coherent $Fr_*{\DD}_{\CB_\zeta,t}$-modules.
Let ${Z_{Har}(U_\zeta)}$ be the Harish-Chandra center of $U_\zeta$, and let $\BC_t$ be the corresponding one-dimensional ${Z_{Har}(U_\zeta)}$-module.
Denote by $\Mod_{f}(U_\zeta\otimes_{Z_{Har}(U_\zeta)}\BC_t)$ the category of finitely-generated $U_\zeta\otimes_{Z_{Har}(U_\zeta)}\BC_t$-modules.
Then we have a functor
\begin{equation}
\label{eq:RGamma0}
R\Gamma(\CB,\bullet):
D^b(\Mod_{coh}(Fr_*{\DD}_{\CB_\zeta,t}))
\to
D^b(\Mod_{f}(U_\zeta\otimes_{Z_{Har}(U_\zeta)}\BC_t))
\end{equation}
between derived categories.
It is natural in view of \cite{BMR} to conjecture that \eqref{eq:RGamma0}
gives an equivalence if $t$ is regular.
By imitating the argument of \cite{BMR} we can show that this is true if we have 
\begin{equation}
\label{eq:RGammaD}
R\Gamma(\CB,Fr_*{\DD}_{\CB_\zeta})\cong
U_\zeta\otimes_{Z_{Har}(U_\zeta)}\BC[\Lambda].
\end{equation}
However, we do not know how to prove \eqref{eq:RGammaD} at present, and hence we can only state it as a conjecture.
We have also a stronger conjecture 
\begin{equation}
\label{eq:RGammaDf}
R\Gamma(\CB,Fr_*({\DD}_{\CB_\zeta})_f)\cong
U_{\zeta,f}\otimes_{Z_{Har}(U_\zeta)}\BC[\Lambda],
\end{equation}
which is the analogue of  \eqref{eq:RGammaD} regarding the adjoint finite parts $({\DD}_{\CB_\zeta})_f$, $U_{\zeta,f}$ of ${\DD}_{\CB_\zeta}$, $U_\zeta$, respectively.
We will give a reformulation of \eqref{eq:RGammaDf}  in terms of the induction functor (see Conjecture \ref{conj3} below).
It turns out that \eqref{eq:RGammaDf} is equivalent to some assertions in Backelin-Kremnizer \cite{BK1}, \cite{BK2} stated to be true under certain conditions on $\ell$ (see Remark \ref{rem:BK} below).

It is also an interesting problem to find a formulation which works 
even in the case when the parameter $t\in H$  is singular.
In the case of Lie algebras in positive characteristics 
Bezrukavnikov-Mirkovi\'{c}-Rumynin \cite{BMR2} have succeeded in giving a more general framework, which works even for singular parameter, using partial flag manifolds (quotients of $G$ by parabolic subgroups).
In their case the parameter space is $\Gh^*$, and one can associate for each $h\in \Gh^*$ a parabolic subgroup whose Levi subgroup is the centralizer of $h$; however, in our case the centralizer of $t\in H$ is not necessarily a Levi subgroup of a parabolic subgroup, and hence the method in \cite{BMR2} cannot be directly applied to our case.

\subsection{}
The contents of this paper is as follows.
In Section 1 we recall basic facts on quantized enveloping algebras at roots of 1 and the corresponding quantized flag manifolds.
In Section 2 we investigate properties of the category of $D$-modules.
In particular, we show that \eqref{eq:RGammaD} implies \eqref{eq:RGamma0} for regular $t$'s, and \eqref{eq:RGammaDf} implies \eqref{eq:RGammaD}.
In Section 3 and Section 4 we recall some known results on the representations of quantized enveloping algebras and the induction functor respectively.
Finally, in Section 5 we give a reformulation of  \eqref{eq:RGammaDf} in terms of the induction functor.

\section{Quantized flag manifold}
\label{sec:QG}
\subsection{Quantized enveloping algebras}
\subsubsection{}
Let $G$ be a connected simply-connected simple algebraic group over the complex number field $\BC$.
We fix Borel subgroups $B^+$ and $B^-$ such that $H=B^+\cap B^-$ is a maximal torus of $G$.
Set $N^+=[B^+,B^+]$ and $N^-=[B^-,B^-]$.
We denote the Lie algebras of $G$, $B^+$, $B^-$, $H$, $N^+$, $N^-$ by $\Gg$, $\Gb^+$, $\Gb^-$, $\Gh$, $\Gn^+$, $\Gn^-$ respectively.
Let $\Delta\subset\Gh^*$ be the root system of $(\Gg,\Gh)$.
We denote by $\Lambda\subset\Gh^*$ and $Q\subset\Gh^*$ the weight lattice and the root lattice respectively.
For $\lambda\in\Lambda$ we denote by $\theta_\lambda$ the corresponding character of $H$.
The coordinate algebra $\BC[H]$ of $H$ is naturally identified with the group algebra $\BC[\Lambda]=\bigoplus_{\lambda\in\Lambda}\BC e(\lambda)$ via the correspondence $\theta_\lambda\leftrightarrow e(\lambda)$ for $\lambda\in\Lambda$.
We take a system of positive roots $\Delta^+$ such that $\Gb^+$ is the sum of weight spaces with weights in $\Delta^+\cup\{0\}$.
Let $\{\alpha_i\}_{i\in I}$ be the set of simple roots, and 
$\{\varpi_i\}_{i\in I}$ the corresponding set of fundamental weights.
We denote by 
$\Lambda^+$ the set of dominant integral weights.
We set $Q^+=\bigoplus_{i\in I}\BZ_{\geqq0}\alpha_i$.
Let $W\subset GL(\Gh^*)$ be the Weyl group.
For $i\in I$ we denote by $s_i\in W$ the corresponding simple reflection.
We take a $W$-invariant symmetric bilinear form
\[
(\,,\,):\Gh^*\times\Gh^*\to\BC
\]
such that $(\alpha,\alpha)=2$ for short roots $\alpha$.
For $\alpha\in\Delta$ we set $\alpha^\vee=2\alpha/(\alpha,\alpha)$.
For $i\in I$ we fix $\overline{e}_i\in\Gg_{\alpha_i}$, $\overline{f}_i\in\Gg_{-\alpha_i}$ such that $[\overline{e}_i,\overline{f}_i]=\alpha_i^\vee$ under the identification $\Gh=\Gh^*$ induced by $(\,\,,\,\,)$.

\subsubsection{}
For $n\in\BZ_{\geqq0}$ we set
\[
[n]_t=\frac{t^n-t^{-n}}{t-t^{-1}}\in\BZ[t,t^{-1}],
\qquad
[n]_t!=[n]_t[n-1]_t\cdots[2]_t[1]_t\in\BZ[t,t^{-1}].
\]

We denote by
$U_\BF$
the quantized enveloping algebra over $\BF=\BQ(q^{1/{|\Lambda/Q|}})$ associated to $\Gg$.
Namely, $U_\BF$ is the associative algebra over $\BF$ generated by elements 
\[
k_\lambda\quad(\lambda\in\Lambda),\qquad
e_i, f_i\quad( i\in I)
\]
satisfying the relations
\begin{align*}
&k_0=1,\quad 
k_\lambda k_\mu=k_{\lambda+\mu}
\qquad&(\lambda,\mu\in\Lambda),
\\
&k_\lambda e_ik_\lambda^{-1}=q^{(\lambda,\alpha_i)}e_i,\qquad
&(\lambda\in\Lambda, i\in I),
\\
&k_\lambda f_ik_\lambda^{-1}=q^{-(\lambda,\alpha_i)}f_i
\qquad
&(\lambda\in\Lambda, i\in I),
\\
&e_if_j-f_je_i=\delta_{ij}\frac{k_i-k_i^{-1}}{q_i-q_i^{-1}}
\qquad
&(i, j\in I),
\\
&\sum_{n=0}^{1-a_{ij}}(-1)^ne_i^{(1-a_{ij}-n)}e_je_i^{(n)}=0
\qquad
&(i,j\in I,\,i\ne j),
\\
&\sum_{n=0}^{1-a_{ij}}(-1)^nf_i^{(1-a_{ij}-n)}f_jf_i^{(n)}=0
\qquad
&(i,j\in I,\,i\ne j),
\end{align*}
where $q_i=q^{(\alpha_i,\alpha_i)/2}, k_i=k_{\alpha_i}, a_{ij}=2(\alpha_i,\alpha_j)/(\alpha_i,\alpha_i)$ for $i, j\in I$, and
\[
e_i^{(n)}=
e_i^n/[n]_{q_i}!,
\qquad
f_i^{(n)}=
f_i^n/[n]_{q_i}!
\]
for $i\in I$ and $n\in\BZ_{\geqq0}$.
We will use the  Hopf algebra structure of $U_\BF$ given by
\begin{align*}
&\Delta(k_\lambda)=k_\lambda\otimes k_\lambda
&(\lambda\in\Lambda),\\
&\Delta(e_i)=e_i\otimes 1+k_i\otimes e_i,\quad
\Delta(f_i)=f_i\otimes k_i^{-1}+1\otimes f_i
&(i\in I),
\\
&\varepsilon(k_\lambda)=1
,\quad
\varepsilon(e_i)=\varepsilon(f_i)=0
&(\lambda\in\Lambda, i\in I),\\
&S(k_\lambda)=k_\lambda^{-1},\quad
S(e_i)=-k_i^{-1}e_i, \quad S(f_i)=-f_ik_i
&(\lambda\in\Lambda, i\in I).
\end{align*}

Define subalgebras $U_\BF^{0}$, $U_\BF^{+}$, $U_\BF^{-}$, $U_\BF^{\geqq0}$, $U_\BF^{\leqq0}$ of $U_\BF$ by
\begin{align*}
&U_\BF^{0}=\langle k_\lambda\mid\lambda\in\Lambda\rangle,\qquad
U_\BF^+=\langle e_i\mid i\in I\rangle, \qquad
U_\BF^-=\langle f_i\mid i\in I\rangle, \\
&U_\BF^{\geqq0}=\langle k_\lambda, e_i\mid\lambda\in\Lambda, i\in I\rangle
,\qquad
U_\BF^{\leqq0}=\langle k_\lambda, f_i\mid\lambda\in\Lambda, i\in I\rangle.
\end{align*}
The multiplication of $U_\BF$ induces
isomorphisms
\begin{align}
\label{eq:tri:1}
&
U_\BF\cong U_\BF^{-}\otimes U_\BF^{0}\otimes U_\BF^{+}
\cong U_\BF^{+}\otimes U_\BF^{0}\otimes U_\BF^{-}
,\\
\label{eq:tri:2}
&
U_\BF^{\geqq0}\cong U_\BF^{0}\otimes U_\BF^{+}
\cong U_\BF^{+}\otimes U_\BF^{0},\\
\label{eq:tri:3}
&
U_\BF^{\leqq0}\cong U_\BF^{0}\otimes U_\BF^{-}
\cong U_\BF^{-}\otimes U_\BF^{0},
\end{align}
of $\BF$-modules.
\eqref{eq:tri:1} is called the triangular decomposition of $U_\BF$.
For $\gamma\in Q$ we set
\begin{align*}
U^\pm_{\BF,\gamma}&=
\{u\in U_\BF^\pm\mid k_\mu u k_{-\mu}=
q^{(\gamma,\mu)}u\quad(\mu\in\Lambda)\}.
\end{align*}
Then we have
\[
U^\pm_{\BF}=\bigoplus_{\gamma\in Q^+}U^\pm_{\BF,\pm\gamma}.
\]

For $i\in I$ we denote by $T_i$ the automorphism of the algebra $U_\BF$ given by 
\begin{align*}
&T_i(k_\mu)=k_{s_i\mu}\qquad(\mu\in\Lambda),\\
&T_i(e_j)=
\begin{cases}
\sum_{k=0}^{-a_{ij}}(-1)^kq_i^{-k}e_i^{(-a_{ij}-k)}e_je_i^{(k)}\qquad
&(j\in I,\,\,j\ne i),\\
-f_ik_i\qquad
&(j=i),
\end{cases}\\
&T_i(f_j)=
\begin{cases}
\sum_{k=0}^{-a_{ij}}(-1)^kq_i^{k}f_i^{(k)}f_jf_i^{(-a_{ij}-k)}\qquad
&(j\in I,\,\,j\ne i),\\
-k_i^{-1}e_i\qquad
&(j=i)
\end{cases}
\end{align*}
(see Lusztig \cite{Lbook}).
Let $w_0$ be the longest element of $W$.
We fix a reduced expression 
\[
w_0=s_{i_1}\cdots s_{i_N}
\]
of $w_0$, and set
\[
\beta_k=s_{i_1}\cdots s_{i_{k-1}}(\alpha_{i_k})\qquad
(1\leqq k\leqq N).
\]
Then we have $\Delta^+=\{\beta_k\mid1\leqq k\leqq N\}$.
For $1\leqq k\leqq N$ set 
\[
e_{\beta_k}=T_{i_1}\cdots T_{i_{k-1}}(e_{i_k}),\quad
f_{\beta_k}=T_{i_1}\cdots T_{i_{k-1}}(f_{i_k}).
\]
Then $\{e_{\beta_{N}}^{m_N}\cdots e_{\beta_{1}}^{m_1}\mid
m_1,\dots, m_N\geqq0\}$ (resp. 
\newline
$\{f_{\beta_{N}}^{m_N}\cdots f_{\beta_{1}}^{m_1}\mid
m_1,\dots, m_N\geqq0\}$)
is an $\BF$-basis of $U_\BF^+$ (resp. $U_\BF^-$), called the PBW-basis (see Lusztig \cite{L2}).
We have $e_{\alpha_i}=e_i$ and $f_{\alpha_i}=f_i$ for any $i\in I$.

Denote by
\begin{equation}
\label{eq:Drinfeld-pairing}
\tau: U_\BF^{\geqq0}\times U_\BF^{\leqq0}\to\BF
\end{equation}
the Drinfeld pairing.
It is characterized as the unique bilinear form satisfying
\begin{align*}
&\tau(x,y_1y_2)=(\tau\otimes\tau)(\Delta(x),y_1\otimes y_2)
&(x\in U_\BF^{\geqq0},\,y_1,y_2\in U_\BF^{\leqq0}),\\
&\tau(x_1x_2,y)=(\tau\otimes\tau)(x_2\otimes x_1,\Delta(y))
&(x_1, x_2\in U_\BF^{\geqq0},\,y\in U_\BF^{\leqq0}),\\
&\tau(k_\lambda,k_\mu)=q^{-(\lambda,\mu)}
&(\lambda,\mu\in\Lambda),\\
&\tau(k_\lambda, f_i)=\tau(e_i,k_\lambda)=0
&(\lambda\in\Lambda,\,i\in I),\\
&\tau(e_i,f_j)=\delta_{ij}/(q_i^{-1}-q_i)
&(i,j\in I)
\end{align*}
(see \cite{Lbook}, \cite{T1}).
It satisfies the following (see \cite{Lbook}, \cite{T1}).
\begin{lemma}
\label{lem:Drinfeld pairing}
\begin{itemize}
\item[\rm(i)]
$\tau(S(x),S(y))=\tau(x,y)$ for $x\in U_\BF^{\geqq0}, y\in U_\BF^{\leqq0}$.
\item[\rm(ii)]
For $x\in U_\BF^{\geqq0}, y\in U_\BF^{\leqq0}$ we have
\begin{align*}
yx=\sum_{(x)_2,(y)_2}
\tau(x_{(0)},S(y_{(0)}))\tau(x_{(2)},y_{(2)})x_{(1)}y_{(1)},\\
xy=\sum_{(x)_2,(y)_2}
\tau(x_{(0)},y_{(0)})\tau(x_{(2)},S(y_{(2)}))y_{(1)}x_{(1)}.
\end{align*}
\item[\rm(iii)]
$\tau(xk_\lambda, yk_\mu)=q^{-(\lambda,\mu)}\tau(x,y)$ for $\lambda, \mu\in\Lambda, x\in U_\BF^+, y\in U_\BF^-$.
\item[\rm(iv)]
$\tau(U^+_{\BF,\beta}, U^-_{\BF,-\gamma})=\{0\}$ for $\beta, \gamma\in Q^+$ with $\beta\ne\gamma$.
\item[\rm(v)]
For any $\beta\in Q^+$ the restriction $\tau|_{U^+_{\BF,\beta}\times U^-_{\BF,-\beta}}$ is non-degenerate.
\end{itemize}
\end{lemma}

We define an algebra homomorphism
\[
\ad:U_\BF\to\End_\BF(U_\BF)
\]
by
\[
\ad(u)(v)=\sum_{(u)}u_{(0)}v(Su_{(1)})\qquad
(u,v\in U_\BF).
\]
\subsubsection{}
We fix an integer $\ell>1$ satisfying
\begin{itemize}
\item[(a)]
$\ell$ is odd,
\item[(b)]
$\ell$ is prime to 3 if $G$ is of type $G_2$, $F_4$, $E_6$, $E_7$, $E_8$,
\item[(c)]
$\ell$ is prime to $|\Lambda/Q|$,
\end{itemize}
and a primitive $\ell$-th root $\zeta'\in\BC$ of 1.
Define a subring $\BA$ of $\BF$ by
\[
\BA=\{f(q^{1/|\Lambda/Q|})\mid
f(x)\in\BQ(x),\,\mbox{$f$ is regular at $x=\zeta'$}\}.
\]
We set $\zeta=(\zeta')^{|\Lambda/Q|}$.
We note that $\zeta$ is also a primitive $\ell$-th root of 1 by the condition (c).

We denote by $U_\BA^L$, $U_\BA$ the $\BA$-forms of $U_\BF$ called the Lusztig form and  the De Concini-Kac form respectively.
Namely, we have
\begin{align*}
U_\BA^L
&=\langle
e_i^{(m)},\, f_i^{(m)},\,k_\lambda\mid
i\in I,\,m\in\BZ_{\geqq0},\,\lambda\in\Lambda
\rangle_{\BA-{\rm alg}}
\subset U_\BF
,\\
U_\BA
&=\langle
e_i,\, f_i,\,k_\lambda\mid
i\in I,\,\lambda\in\Lambda
\rangle_{\BA-{\rm alg}}
\subset U_\BF.
\end{align*}
We have obviously
$U_\BA\subset U^L_\BA$.
The Hopf algebra structure of $U_\BF$ induces Hopf algebra structures over $\BA$ of $U_\BA^{L}$ and $U_\BA$.
We set
\begin{align*}
&U_\BA^{L,\flat}=U_\BA^{L}\cap U_\BF^{\flat},\quad
U_\BA^{\flat}=U_\BA\cap U_\BF^{\flat}
&(\flat=+, -, 0, \geqq0, \leqq0),\\
&U_{\BA,\pm\gamma}^{L,\pm}
=U^L_{\BA}\cap U_{\BF,\pm\gamma}^{\pm},\quad
U_{\BA,\pm\gamma}^{\pm}
=U_{\BA}\cap U_{\BF,\pm\gamma}^{\pm}
&(\gamma\in Q^+).
\end{align*}
Then we have triangular decompositions
\begin{align*}
&
U_\BA^{L}\cong U_\BA^{L,-}\otimes_\BA U_\BA^{L,0}\otimes_\BA U_\BA^{L,+},\\
&
U_\BA\cong U_\BA^{-}\otimes_\BA U_\BA^{0}\otimes_\BA U_\BA^{+}.
\end{align*}
Moreover,
we have
\[
U_\BA^{L,\pm}=\bigoplus_{\gamma\in Q^+}U^{L,\pm}_{\BA,\pm\gamma},
\qquad
U_\BA^\pm=\bigoplus_{\gamma\in Q^+}U^\pm_{\BA,\pm\gamma}.
\]

The Drinfeld pairing \eqref{eq:Drinfeld-pairing} induces
\begin{equation}
\label{eq:Drinfeld-pairingA}
{}^L\tau_\BA: U_\BA^{L,\geqq0}\times U_\BA^{\leqq0}\to\BA,
\qquad
\tau^L_\BA: U_\BA^{\geqq0}\times U_\BA^{L,\leqq0}\to\BA.
\end{equation}

\begin{lemma}
\label{lem:ad}
$\ad(U_\BA^L)(U_\BA)\subset U_\BA$.
\end{lemma}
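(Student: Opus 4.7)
The plan is to combine the multiplicativity of $\ad$ with the coproduct structure of $U_\BF$ to reduce the assertion to an explicit check on generators.

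First, since $\ad:U_\BF\to\End_\BF(U_\BF)$ is an algebra homomorphism, it suffices to verify $\ad(u)(U_\BA)\subset U_\BA$ for $u$ running over a set of $\BA$-algebra generators of $U_\BA^L$, i.e.\ for $u\in\{k_\lambda,e_i^{(m)},f_i^{(m)}\}$. The case $u=k_\lambda$ is trivial, since $\ad(k_\lambda)(v)=k_\lambda v k_{-\lambda}$. The cases $u=e_i^{(m)}$ and $u=f_i^{(m)}$ will be handled by analogous computations, so I shall describe only the former. Using the twisted Leibniz rule
\[
\ad(u)(vw)=\sum_{(u)}\ad(u_{(0)})(v)\cdot\ad(u_{(1)})(w)
\]
together with the coproduct
\[
\Delta(e_i^{(m)})=\sum_{r=0}^m q_i^{r(m-r)}e_i^{(m-r)}k_i^r\otimes e_i^{(r)},
\]
an induction on the length of $v$ as a word in the generators $\{k_\mu,e_j,f_j\}$ of $U_\BA$ will reduce everything to the base case: showing $\ad(e_i^{(m)})(g)\in U_\BA$ for each such generator $g$.

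For the base case I would plug into the closed formula
\[
\ad(e_i^{(m)})(v)=\sum_{r=0}^m(-1)^r q_i^{r(m-1)}e_i^{(m-r)}(k_i^r v k_i^{-r})e_i^{(r)},
\]
derived from the coproduct together with $S(e_i^{(r)})=(-1)^r q_i^{r(r-1)}k_i^{-r}e_i^{(r)}$, and handle the cases in turn. For $g=e_j$ with $j\neq i$, the quantum Serre relations force $\ad(e_i^{(m)})(e_j)=0$ once $m\geq 1-a_{ij}$; for the remaining (bounded) $m$, the denominator $[m]_{q_i}!$ is a unit in $\BA$ so integrality is automatic. For $g=e_i$, the factors $e_i^{(m-r)}e_ie_i^{(r)}$ telescope, and the $q$-binomial theorem yields the closed form $(-1)^m q_i^{m(m+1)/2}(q_i-q_i^{-1})^m e_i^{m+1}\in U_\BA$. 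For $g=f_i$, the commutation $e_i^{(a)}f_i=f_ie_i^{(a)}+[k_i;1-a]e_i^{(a-1)}$ (noting that $[k_i;c]\in U_\BA$, since $q_i-q_i^{-1}$ is a unit in $\BA$ under our hypotheses on $\ell$) splits the sum into two $q$-binomial pieces that collapse. Finally, for $g=k_\mu$ and $g=f_j$ with $j\neq i$ (which commutes with $e_i$), the sum collapses via $e_i^{(m-r)}e_i^{(r)}=\binom{m}{r}_{q_i}e_i^{(m)}$, and the claim reduces to showing
\[
\prod_{k=0}^{m-1}\bigl(1-q_i^{2k+n}\bigr)\Big/[m]_{q_i}!\in\BA
\]
for a suitable integer $n$ depending on $g$.

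The main obstacle will be this last divisibility, a finite-order-of-vanishing check at $q=\zeta'$. The order of $[m]_{q_i}!$ at $\zeta'$ equals $\lfloor m/\ell\rfloor$, while each factor $1-q_i^{2k+n}$ vanishes exactly when $2k+n\equiv 0\pmod{\ell}$. Since $\ell$ is odd, $2$ is invertible modulo $\ell$, so the solutions $k\in\{0,\dots,m-1\}$ form an arithmetic progression of common difference $\ell$; a short count shows it contains at least $\lfloor m/\ell\rfloor$ terms, which supplies exactly the divisibility required.
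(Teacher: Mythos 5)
Your proposal pursues the same strategy as the paper: since $\ad$ is an algebra homomorphism, reduce to $u\in\{k_\lambda, e_i^{(m)}, f_i^{(m)}\}$, then use the twisted Leibniz rule $\ad(u)(vw)=\sum_{(u)}\ad(u_{(0)})(v)\,\ad(u_{(1)})(w)$ together with the explicit coproduct of $e_i^{(m)}$ to reduce further to checking $\ad(e_i^{(m)})$ (and its $f$-analogue) on a set of algebra generators of $U_\BA$, and finally compute those cases explicitly. Your closed formula
\[
\ad(e_i^{(m)})(v)=\sum_{r=0}^m(-1)^r q_i^{r(m-1)}\,e_i^{(m-r)}(k_i^r v k_i^{-r})e_i^{(r)}
\]
is correct (the exponent $r(m-r)+r(r-1)=r(m-1)$), and so is the $q$-binomial collapse for $v=k_\mu$ and for $v=f_j$, $j\ne i$. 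So the proof is essentially the paper's proof.

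The one interesting discrepancy is the choice of generating set. The paper reduces the $\ad(e_i^{(n)})$ case to $u\in\{k_\lambda, e_j, f_jk_j\}$ rather than $\{k_\lambda, e_j, f_j\}$. This is a small optimization: $\ad(e_i^{(n)})(f_jk_j)=0$ identically for $j\ne i$, and $\ad(e_i^{(n)})(f_ik_i)$ has a clean closed form (it is $(k_i^2-1)/(q_i-q_i^{-1})$ for $n=1$ and $(-1)^{n-1}q_i^{(n-1)(n+2)/2}(q_i-q_i^{-1})^{n-2}e_i^{n-1}k_i^2$ for $n>1$), so the only place the paper needs the $q$-binomial-over-$q$-factorial divisibility at $\zeta'$ is $\ad(e_i^{(n)})(k_\lambda)$. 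With your set $\{k_\mu,e_j,f_j\}$, both the $k_\mu$ case and the $f_j$ ($j\ne i$) case require that divisibility count — slightly heavier but entirely correct.

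The divisibility argument itself, which you spell out and the paper only alludes to (it remarks only that $[r]_{q_i}!$ is a unit for $r\leq -a_{ij}$, covering the Serre case), is sound. Two small points you should make explicit when writing it up: first, the factor $1-q_i^{2k+n}$ vanishes at $q^{1/|\Lambda/Q|}=\zeta'$ iff $\ell\mid 2k+n$ precisely because $\ell$ is prime to $(\alpha_i,\alpha_i)/2$ — this is where hypotheses (a) and (b) on $\ell$ enter, and they are not cosmetic; second, each factor and each $[s]_{q_i}$ has a simple zero there, so matching the counts really is enough. (Your sign and exponent for $\ad(e_i^{(m)})(e_i)$ disagree with the paper's $q_i^{-n(n+1)/2}(q_i-q_i^{-1})^n e_i^{n+1}$; this is a minor computational slip with no bearing on the $\BA$-integrality conclusion.)
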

\begin{proof}
It is sufficient to show 
\begin{align}
&\ad(k_\lambda)(U_\BA)\subset U_\BA\qquad(\lambda\in\Lambda),
\label{eq:lem:ad:1}\\
&\ad(e_i^{(n)})(U_\BA)\subset U_\BA
\qquad(i\in I,\,n\in\BZ_{\geqq0}),
\label{eq:lem:ad:2}\\
&\ad(f_i^{(n)})(U_\BA)\subset U_\BA
\qquad(i\in I,\,n\in\BZ_{\geqq0}).
\label{eq:lem:ad:3}
\end{align}
The proof of \eqref{eq:lem:ad:1} is easy and omitted.
By the formulas
\begin{align*}
&\ad(x)(uv)=\sum_{(x)}\ad(x_{(0)})(u)\ad(x_{(1)})(v)
&(x\in U_\BA^L, \, u,v\in U_\BA),\\
&\Delta(e_i^{(n)})=\sum_{r=0}^n
q_i^{r(n-r)}e_i^{(n-r)}k_i^r\otimes e_i^{(r)}
\qquad
&(i\in I, n\geqq0),\\
&\Delta(f_i^{(n)})=\sum_{r=0}^n
q_i^{-r(n-r)}f_i^{(r)}\otimes k_i^{-r}f_i^{(n-r)}
\qquad
&(i\in I, n\geqq0)
\end{align*}
we have only to show
\begin{align}
&\ad(e_i^{(n)})(u)\in U_\BA
&(i\in I,\,n\in\BZ_{\geqq0},\, u=k_\lambda, e_j, f_jk_j),
\label{eq:lem:ad:E}\\
&\ad(f_i^{(n)})(u)\in U_\BA
&(i\in I,\,n\in\BZ_{\geqq0}, \, u=k_\lambda, e_j, f_j).
\label{eq:lem:ad:F}
\end{align}
For $\lambda\in\Lambda$, $i, j\in I$ with $i\ne j$ and $n\in\BZ_{>0}$ we have
\begin{align*}
&\ad(e_i^{(n)})(k_\lambda)
=
\frac{(-1)^nq_i^{n(n-1)}}{[n]_{q_i}!}
\left(
\prod_{j=0}^{n-1}(q_i^{(\lambda,\alpha_i^\vee)}-q_i^{-2j})
\right)
e_i^nk_\lambda,\\
&\ad(e_i^{(n)})(e_i)
=
q_i^{-n(n+1)/2}(q_i-q_i^{-1})^ne_i^{n+1},
\\
&\ad(e_i^{(n)})(e_j)
=
\begin{cases}
\sum_{r=0}^n(-1)^rq_i^{r(n-1+a_{ij})}e_i^{(n-r)}e_je_i^{(r)}\qquad
&(n<1-a_{ij}),\\
0&(n\geqq 1-a_{ij}),
\end{cases}
\\
&\ad(e_i^{(n)})(f_ik_i)
=
\begin{cases}
(k_i^2-1)/(q_i-q_i^{-1})
&(n=1),\\
(-1)^{n-1}q_i^{(n-1)(n+2)/2}(q_i-q_i^{-1})^{n-2}e_i^{n-1}k_i^2
\quad&(n>1),
\end{cases}
\\
&\ad(e_i^{(n)})(f_jk_j)
=
0
,
\end{align*}
and hence \eqref{eq:lem:ad:E} holds
(note that $[r]_{q_i}!$ is invertible in $\BA$ for $r\leqq-a_{ij}$).
The proof of \eqref{eq:lem:ad:F} is similar and omitted.
\end{proof}

\subsubsection{}
Let us consider the specialization 
\[
\BA\to\BC\qquad
(q^{1/|\Lambda/Q|}\mapsto\zeta').
\]
Note that $q$ is mapped to $\zeta=(\zeta')^{|\Lambda/Q|}\in\BC$, which is also a primitive $\ell$-th root of 1.
We set
\begin{align*}
&U_\zeta^L=\BC\otimes_\BA U_\BA^L,\qquad
U_\zeta=\BC\otimes_\BA U_\BA,\\
&U_\zeta^{L,\flat}=\BC\otimes_\BA U_\BA^{L,\flat},\qquad
U_\zeta^{\flat}=\BC\otimes_\BA U_\BA^{\flat}
&(\flat=+,-, 0, \geqq0,\leqq0),\\
&
U_{\zeta,\pm\gamma}^{L,\pm}=\BC\otimes_\BA U_{\BA,\pm\gamma}^{L,\pm},\qquad
U_{\zeta,\pm\gamma}^{\pm}=\BC\otimes_\BA U_{\BA,\pm\gamma}^{\pm}
&(\gamma\in Q^+).
\end{align*}
Then $U^L_\zeta$ and $U_\zeta$ are Hopf algebras over $\BC$, and we have triangular decompositions
\begin{align*}
&
U^L_\zeta\cong U_\zeta^{L,-}\otimes U_\zeta^{L,0}\otimes U_\zeta^{L,+},\\
&
U_\zeta\cong U_\zeta^{-}\otimes U_\zeta^{0}\otimes U_\zeta^{+}.
\end{align*}
Moreover, we have
\[
U_\zeta^{L,\pm}=\bigoplus_{\gamma\in Q^+}U^{L,\pm}_{\zeta,\pm\gamma},
\qquad
U_\zeta^\pm=\bigoplus_{\gamma\in Q^+}U^\pm_{\zeta,\pm\gamma}.
\]

By De Concini-Kac \cite{DK} we have the following.
\begin{lemma}
\label{lem:PBW-DK}
\begin{itemize}
\item[\rm(i)]
$\{{e}_{\beta_{N}}^{m_N}\cdots {e}_{\beta_{1}}^{m_1}\mid
m_1,\dots, m_N\geqq0\}$ $($resp. 
\newline
$\{{f}_{\beta_{N}}^{m_N}\cdots {f}_{\beta_{1}}^{m_1}\mid
m_1,\dots, m_N\geqq0\}$$)$
is a ${\BC}$-basis of $U_{\zeta}^+$ $($resp. $U_{\zeta}^-$$)$.
\item[\rm(ii)]
$\{k_\lambda\mid\lambda\in\Lambda\}$ is a $\BC$-basis of $U^0_\zeta$ .
\end{itemize}
\end{lemma}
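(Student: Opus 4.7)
The overall plan is to prove the stronger statement that $\{e_{\beta_N}^{m_N}\cdots e_{\beta_1}^{m_1}\}$, $\{f_{\beta_N}^{m_N}\cdots f_{\beta_1}^{m_1}\}$, and $\{k_\lambda\}_{\lambda \in \Lambda}$ are $\BA$-bases of $U_\BA^+$, $U_\BA^-$, and $U_\BA^0$, respectively, and then to deduce the $\BC$-basis claims by tensoring along the specialization $\BA \to \BC$; since a free $\BA$-module specializes to a $\BC$-vector space with the image of any $\BA$-basis as basis, both (i) and (ii) follow. Part (ii) is then essentially formal: the defining relations $k_0 = 1$ and $k_\lambda k_\mu = k_{\lambda + \mu}$ identify $U_\BF^0$ with the group $\BF$-algebra $\BF[\Lambda]$, and the $\BA$-subalgebra of $U_\BA$ generated by the $k_\lambda$ is isomorphic to $\BA[\Lambda]$ (surjective from the presentation, injective from $\BF$-linear independence inside $U_\BF^0$). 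Combined with the integral triangular decomposition stated earlier in the excerpt, this subalgebra coincides with $U_\BA^0$, giving (ii) after specialization.

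For (i), the first step is to verify that each $e_{\beta_k} = T_{i_1}\cdots T_{i_{k-1}}(e_{i_k})$ actually belongs to $U_\BA^+$, not merely to the a priori larger form $U_\BA^{L,+}$. The Lusztig formula for $T_i(e_j)$ with $j \neq i$ involves divided powers $e_i^{(m)}$ with $m \leq -a_{ij} \leq 3$; under our hypotheses on $\ell$ (odd, and prime to $3$ in type $G_2$), the quantum factorials $[m]_{q_i}!$ for such $m$ are units in $\BA$, so these divided powers already lie in $U_\BA$. Hence each $T_i$ stabilizes $U_\BA$, and iteration gives $e_{\beta_k} \in U_\BA^+$; the same reasoning produces $f_{\beta_k} \in U_\BA^-$, so the PBW monomials lie in the expected $\BA$-forms.

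The main step, and the expected technical obstacle, is a Levendorskii-Soibelman type reordering relation with $\BA$-coefficients:
\[
e_{\beta_k} e_{\beta_j} - q^{(\beta_k, \beta_j)} e_{\beta_j} e_{\beta_k} = \sum_{\mathbf{m}} c_{\mathbf{m}}\, e_{\beta_{j-1}}^{m_{j-1}} \cdots e_{\beta_{k+1}}^{m_{k+1}} \qquad (k < j),
\]
with each $c_{\mathbf{m}} \in \BA$, together with its counterpart for the $f_{\beta_k}$. This identity is standard over $\BF$; the subtle point is integrality. One reduces via the braid group action to a rank-two computation, where the structure constants are explicit expressions whose denominators are products of $[m]_{q_i}!$ for $m \leq 3$, all invertible in $\BA$ by the same analysis as above. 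Granted the relation, an induction on the reverse lexicographic order of monomials rewrites any monomial in the generators $e_i$ as an $\BA$-linear combination of PBW monomials, so these $\BA$-span $U_\BA^+$. Their $\BA$-linear independence is immediate from the $\BF$-basis property and the inclusion $U_\BA^+ \hookrightarrow U_\BF^+$. Specialization along $\BA \to \BC$ then yields (i).
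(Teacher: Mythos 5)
The paper gives no proof here: it simply attributes the statement to De Concini--Kac \cite{DK}. So there is no in-paper argument to compare against, and your proposal should be judged on its own.

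Your overall plan is the right one and follows the standard line of \cite{DK}: establish the $\BA$-basis statement for $U_\BA^\pm$, $U_\BA^0$ and specialize. Part (ii) is fine. The observation that $e_{\beta_k}\in U_\BA^+$ is correct and well argued: the divided powers entering Lusztig's $T_i$-formula are $e_i^{(m)}$ with $m\le -a_{ij}\le 3$, and the hypotheses that $\ell$ is odd and, in type $G_2$, prime to $3$ ensure that $[m]_{q_i}$ for $1\le m\le 3$ specializes to a nonzero complex number, hence $[m]_{q_i}!\in\BA^\times$; so $T_i$ preserves $U_\BA$ and $e_{\beta_k}=T_{i_1}\cdots T_{i_{k-1}}(e_{i_k})\in U_\BA\cap U_\BF^+=U_\BA^+$. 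Linear independence over $\BA$ from linear independence over $\BF$ is also fine.

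The genuine gap is in the spanning step, which rests entirely on the unproven assertion that the Levendorskii--Soibelman structure constants $c_{\mathbf m}$ lie in $\BA$. Two cautions. First, there is a latent circularity to avoid: a priori the $c_{\mathbf m}$ are only elements of $\BF$, and one cannot deduce $c_{\mathbf m}\in\BA$ from the mere fact that the left-hand side of the LS identity lies in $U_\BA^+$, because that deduction already presupposes that the PBW monomials form an $\BA$-basis -- precisely what is being proved. You need an independent integrality argument. Second, your claim that ``one reduces via the braid group action to a rank-two computation'' is too coarse as stated: applying $T_{i_1}^{-1}\cdots T_{i_{k-1}}^{-1}$ moves $e_{\beta_k}$ to a simple root vector $e_{i_k}$, but $e_{\beta_j}$ is sent to a general positive root vector in $U_\BA^+$, not to a rank-two situation. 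Establishing integrality of the LS constants (with careful control of which sub-factorials appear as denominators) is the actual content of the De Concini--Kac argument, and the proposal as written asserts rather than proves exactly that. If you make this step precise -- either by repeating the \cite{DK} computation or by replacing it with, for example, their filtration argument or the non-degeneracy of the $\BA$-valued Drinfeld pairing against an explicit dual basis -- the rest of your outline goes through.
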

The Drinfeld pairings \eqref{eq:Drinfeld-pairingA} induce
\begin{equation}
\label{eq:Drinfeld-pairing-zeta}
{}^L\tau_\zeta: U_\zeta^{L,\geqq0}\times U_\zeta^{\leqq0}\to\BC,
\qquad
\tau^L_\zeta: U_\zeta^{\geqq0}\times U_\zeta^{L,\leqq0}\to\BC.
\end{equation}
Moreover, we have the following (see \cite[Lemma 1.5]{TI}).
\begin{proposition}
For any $\gamma\in Q^+$ the restrictions of ${}^L\tau_\zeta$ and $\tau^L_\zeta$ to
\[
U_{\zeta,\gamma}^{L,+}\times U_{\zeta,-\gamma}^{-}\to\BC,
\qquad
U_{\zeta,\gamma}^{-}\times U_{\zeta,-\gamma}^{L,-}\to\BC
\]
respectively are non-degenerate.
\end{proposition}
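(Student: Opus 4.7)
The plan is to exhibit explicit nearly-dual bases on each side via the PBW construction, which reduces the non-degeneracy to a diagonal non-vanishing check. By Lemma \ref{lem:PBW-DK}, the monomials $f^{\underline{m}}:=f_{\beta_N}^{m_N}\cdots f_{\beta_1}^{m_1}$ with $\sum_k m_k\beta_k=\gamma$ form a $\BC$-basis of $U_{\zeta,-\gamma}^-$. A parallel argument, lifting divided powers $e_{\beta_k}^{(m_k)}\in U_\BA^L$ through the $T_i$-action and specializing at $q=\zeta$, shows that the monomials $e^{(\underline{m})}:=e_{\beta_N}^{(m_N)}\cdots e_{\beta_1}^{(m_1)}$ with $\sum_k m_k\beta_k=\gamma$ form a $\BC$-basis of $U_{\zeta,\gamma}^{L,+}$. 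Both bases are finite and indexed by the same set.

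The key computation is that in $U_\BF$ the Drinfeld pairing is diagonal on PBW monomials: by induction on $\sum_k m_k$ using the coproduct compatibility of Lemma \ref{lem:Drinfeld paring}\,(ii) together with the Levendorskii--Soibelman type $T_i$-commutation relations for the root vectors $e_{\beta_k}, f_{\beta_k}$ (Lusztig \cite{Lbook}), one shows
\[
\tau\bigl(e^{\underline{m}},f^{\underline{n}}\bigr)=\delta_{\underline{m},\underline{n}}\prod_{k=1}^N\tau(e_{\beta_k}^{m_k},f_{\beta_k}^{m_k}),
\]
where $\tau(e_\beta^m,f_\beta^m)=q_\beta^{-m(m-1)/2}[m]_{q_\beta}!/(q_\beta^{-1}-q_\beta)^m\in\BF$. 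Dividing by $\prod_k[m_k]_{q_{\beta_k}}!$ to pass to the divided-power side absorbs all the quantum factorials, yielding the $\BA$-integral formula
\[
{}^L\tau_\BA\bigl(e^{(\underline{m})},f^{\underline{n}}\bigr)=\delta_{\underline{m},\underline{n}}\prod_{k=1}^N\frac{q_{\beta_k}^{-m_k(m_k-1)/2}}{(q_{\beta_k}^{-1}-q_{\beta_k})^{m_k}}.
\]
Specializing at $q=\zeta$, each factor $\zeta_{\beta_k}^{-1}-\zeta_{\beta_k}$ is nonzero: for every $\beta\in\Delta^+$ the element $\zeta_\beta=\zeta^{(\beta,\beta)/2}$ has multiplicative order $\ell$ since $(\beta,\beta)/2\in\{1,2,3\}$ and $\ell$ is odd and prime to $3$ in type $G_2$. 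Hence the matrix of ${}^L\tau_\zeta$ in the chosen bases of $U_{\zeta,\gamma}^{L,+}$ and $U_{\zeta,-\gamma}^-$ is diagonal with nonzero diagonal, proving non-degeneracy.

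The second pairing $\tau^L_\zeta$ on $U_{\zeta,\gamma}^{+}\times U_{\zeta,-\gamma}^{L,-}$ is handled symmetrically by interchanging the roles of $e$'s and $f$'s (and their divided powers). The main obstacle is the diagonality statement (the $\delta_{\underline{m},\underline{n}}$); this is the combinatorial heart of the argument and is the standard orthogonality of PBW bases under the Drinfeld pairing, which can be extracted from \cite{Lbook} or reproved by a direct induction on total length. Once established over $\BF$, passage to the $\BA$-form and specialization to $\zeta$ is essentially formal because the divided-power normalization keeps the whole matrix $\BA$-integral.
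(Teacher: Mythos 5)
Your proof is correct and is essentially the standard argument that underlies the paper's citation to \cite{TI}: on each graded piece you exhibit PBW bases (De Concini--Kac monomials on one side, Lusztig divided-power monomials on the other), invoke the orthogonality of PBW monomials under the Drinfeld pairing, and observe that the diagonal entries, once the quantum factorials are absorbed by the divided powers, are units in $\BA$ because $\zeta^{(\beta,\beta)/2}\neq\pm 1$ for every $\beta\in\Delta^+$ under the standing hypotheses on $\ell$. Two small points worth flagging for completeness: the identity $\tau(e_\beta,f_\beta)=1/(q_\beta^{-1}-q_\beta)$ for \emph{non-simple} $\beta$ is not part of the defining relations of $\tau$ and requires the compatibility of the braid group operators $T_i$ with the Drinfeld pairing (as in \cite{Lbook}), so this should be cited explicitly rather than folded into the Levendorskii--Soibelman allusion; and the passage from the $\BF$-formula to the $\BA$-integral one depends on the fact that the divided-power monomials already lie in $U^{L,+}_\BA$ and that the resulting diagonal entries are regular at $\zeta'$, both of which you state but which deserve a one-line justification (the former is Lusztig's PBW theorem for the restricted integral form, the latter follows from the order computation you give).
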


By Lemma \ref{lem:ad} we have an algebra homomorphism
\[
\ad:U_\zeta^L\to \End_\BC(U_\zeta).
\]

In general for a Lie algebra $\Gs$ we denote its enveloping algebra by $U(\Gs)$.
We denote by
\begin{equation}
\label{eq:LFrobenius}
\pi:U_\zeta^L\to U(\Gg)
\end{equation}
Lusztig's Frobenius homomorphism (\cite{L2}). 
Namely $\pi$ is the $\BC$-algebra homomorphism given by
\[
\pi(e_i^{(m)})=
\begin{cases}
\overline{e}_i^{(m/\ell)}\,\,&(\ell|m)\\
0&(\ell\not|m),
\end{cases}
\quad
\pi(f_i^{(m)})=
\begin{cases}
\overline{f}_i^{(m/\ell)}\,\,&(\ell|m)\\
0&(\ell\not|m),
\end{cases}
\quad
\pi(k_\lambda)=1
\]
for $i\in I$, $m\in\BZ_{\geqq0}$, $\lambda\in\Lambda$. 
Here, $\overline{e}_i^{(n)}=\overline{e}_i^n/n!$, $\overline{f}_i^{(n)}=\overline{f}_i^n/n!$ for $i\in I$ and $n\in\BZ_{\geqq0}$. 
Then $\pi$ is a homomorphism of Hopf algebras.

We recall the description of the center $Z(U_\zeta)$ of the algebra $U_\zeta$ due to De Concini-Kac \cite{DK} and De Concini-Procesi \cite{DP}.
Denote by $Z(U_{\BF})$ the center of $U_{\BF}$, and 
define a subalgebra $Z_{Har}(U_\zeta)$ of $Z(U_\zeta)$ by
\[
Z_{Har}(U_\zeta)=\Image(Z(U_{\BF})\cap U_\BA\to U_\zeta).
\]
We define a shifted action of $W$ on the group algebra 
$\BC[\Lambda]=\bigoplus_{\lambda\in\Lambda}\BC
e(\lambda)$
of $\Lambda$ by
\begin{equation}
\label{eq:shft}
w\circ e(\lambda)=\zeta^{(w\lambda-\lambda,\rho)}e(w\lambda)\qquad
(w\in W,\,\lambda\in\Lambda).
\end{equation}
Let 
\begin{equation}
\iota:Z_{Har}(U_\zeta)\to\BC[\Lambda]
\end{equation}
be the composite of
\[
Z_{Har}(U_\zeta)
\hookrightarrow
U_\zeta
\cong
U^-_\zeta\otimes
U^0_\zeta\otimes
U^+_\zeta\xrightarrow{\varepsilon\otimes1\otimes\varepsilon}
U^0_\zeta
\cong
\BC[\Lambda],
\]
where $U^0_\zeta
\cong
\BC[\Lambda]$ is given by $k_\lambda\leftrightarrow e(\lambda)$.
Then by \cite{DK} 
$\iota$ is an injective algebra homomorphism with image 
\[
\BC[2\Lambda]^{W\circ}
=\{
f\in\BC[2\Lambda]\mid w\circ f=f\quad(\forall w\in W)\}.
\]
In particular, 
we have  an isomorphism
\begin{equation}
\label{eq:HC-center}
Z_{Har}(U_\zeta)\simeq\BC[2\Lambda]^{W\circ}
\end{equation}
of $\BC$-algebras.
By \cite{DK} the elements
\[
{e_\beta}^\ell,\quad
{f_\beta}^\ell,\quad
k_{\ell\lambda}\qquad(\beta\in\Delta^+,\,\lambda\in\Lambda)
\]
are central  in $U_\zeta$.
Let $Z_{Fr}(U_\zeta)$ be the subalgebra of $U_\zeta$ generated by them.
It is a Hopf subalgebra of $U_\zeta$.
Define an algebraic subgroup $K$ of $B^+\times B^-$ by
\[
K=\{(gh, g'h^{-1})\mid
h\in H,\,g\in N^+,\,g'\in N^-\}.
\]
By \cite{DP} we have an isomorphism
\begin{equation}
\label{eq:Fr-center}
Z_{Fr}(U_\zeta)\cong \BC[K]
\end{equation}
of Hopf algebras.
We refer the reader to \cite{TI} for the explicit description of the isomorphism \eqref{eq:Fr-center}.
By \cite{DP} $Z(U_\zeta)$ is generated by $Z_{Fr}(U_\zeta)$ and $Z_{Har}(U_\zeta)$.
Moreover, we have an isomorphism 
\[
Z(U_\zeta)\cong
Z_{Har}(U_\zeta)\otimes_{Z_{Har}(U_\zeta)\cap Z_{Fr}(U_\zeta)}
Z_{Fr}(U_\zeta)
\qquad(z_1z_2\leftrightarrow z_1\otimes z_2)
\]
of algebras.

\subsection{Sheaves on quantized flag manifolds}
\subsubsection{}
We denote by $C_\BF$ the subspace of $U_\BF^*=\Hom_\BF(U_\BF,\BF)$ spanned by the matrix coefficients of finite-dimensional $U_\BF$-modules of type 1 in the sense of Lusztig, and denote by
\begin{equation}
\label{eq:Hopf-pairing}
\langle\,,\,\rangle:C_\BF\times U_\BF\to\BF
\end{equation}
the canonical pairing.
Then $C_\BF$ is endowed with a Hopf algebra structure dual to $U_\BF$ via \eqref{eq:Hopf-pairing}.
We have a $U_\BF$-bimodule structure of $C_\BF$ given by
\[
\langle u_1\cdot\varphi\cdot u_2,u\rangle
=\langle \varphi,u_2uu_1\rangle
\qquad(\varphi\in C_\BF, u, u_1, u_2\in U_\BF).
\]
Define a $\Lambda$-graded ring $A_\BF=\bigoplus_{\lambda\in\Lambda^+}A_\BF(\lambda)$ by
\begin{align*}
A_\BF&=\{\varphi\in C_\BF\mid\varphi\cdot f_i=0\quad(i\in I)\},\\
A_\BF(\lambda)&=
\{\varphi\in A_\BF\mid \varphi\cdot k_\mu=q^{(\mu,\lambda)}\varphi\quad(\mu\in\Lambda)\}.
\end{align*}
Note that $A_\BF$ is a left $U_\BF$-submodule of $C_\BF$.
For $\lambda\in\Lambda^+$ and $\xi\in\Lambda$ we set
\[
A_\BF(\lambda)_\xi
=\{\varphi\in A_\BF(\lambda)
\mid
k_\mu\cdot\varphi=q^{(\xi,\mu)}\varphi
\}.
\]
Then we have 
\[
A_\BF(\lambda)=\bigoplus_{\xi\in\lambda-Q^+}A_\BF(\lambda)_\xi.
\]

We define $\BA$-forms $C_\BA$, $A_\BA$, $A_\BA(\lambda)$ ($\lambda\in\Lambda^+$) of $C_\BF$, $A_\BF$,  $A_\BF(\lambda)$ respectively by
\begin{align*}
&C_\BA=\{\varphi\in C_\BF\mid
\langle \varphi,U_\BA^L\rangle\subset\BA\},\quad
A_\BA=A_\BF\cap C_\BA,\quad
A_\BA(\lambda)=A_\BF(\lambda)\cap C_\BA.
\end{align*}
Then $C_\BA$ is a Hopf algebra over $\BA$, and $A_\BA$ is its $\BA$-subalgebra.
Moreover, $C_\BA$ is a $U^L_\BA$-bimodule and $A_\BA$ is its left $U^L_\BA$-submodule.
We also set
$A_\BA(\lambda)_\xi=A_\BF(\lambda)_\xi\cap A_\BA$ for $\lambda\in\Lambda^+$, $\xi\in\Lambda$.

We set 
\[
C_\zeta=\BC\otimes_\BA C_\BA,\quad
A_\zeta=\BC\otimes_\BA A_\BA,\quad
A_\zeta(\lambda)=\BC\otimes_\BA A_\BA(\lambda)\qquad
(\lambda\in\Lambda^+).
\]
Then $C_\zeta$ is a Hopf algebra over $\BC$.
Moreover, the $U_\BF$-bimodule structure of $C_\BF$ induces a $U_\zeta^L$-bimodule structure of
$C_\zeta$.
For $\lambda\in\Lambda^+$ and $\xi\in\Lambda$ we set
$A_\zeta(\lambda)_\xi=\BC\otimes_\BA A_\BA(\lambda)_\xi$
Then we have 
\[
A_\zeta(\lambda)=\bigoplus_{\xi\in\lambda-Q^+}A_\zeta(\lambda)_\xi.
\]
We have a natural pairing
\begin{equation}
\label{eq:Hopf-pairing2}
\langle\,,\,\rangle:C_\zeta\times U^L_\zeta\to\BC.
\end{equation}
induced by \eqref{eq:Hopf-pairing}.

\subsubsection{}
For a ring (resp.\ $\Lambda$-graded ring) $\CR$ we denote by $\Mod(\CR)$ (resp.\ $\Mod_\Lambda(\CR)$ ) the category of $\CR$-modules (resp.\ $\Lambda$-graded left $\CR$-modules).
Assume that we are given a homomorphism $\jmath:A\to B$ of $\Lambda$-graded rings satisfying
\begin{equation}
\label{eq:cond-AK}
\jmath(A(\lambda))B(\mu)=B(\mu)\jmath(A(\lambda))\qquad
(\lambda, \mu\in\Lambda).
\end{equation}
For $M\in\Mod_{\Lambda}(B)$ let $\Tor(M)$ be the subset of $M$ consisting of $m\in M$ 
such that there exists $\lambda\in\Lambda^+$ satisfying $\jmath(A(\lambda+\mu))m=\{0\}$ for any $\mu\in\Lambda^+$.
Then $\Tor(M)$ is a subobject of $M$ in $\Mod_{\Lambda}(B)$ by \eqref{eq:cond-AK}.
We denote by $\Tor_{\Lambda^+}(A,B)$ the full subcategory of $\Mod_{\Lambda}(B)$ consisting of $M\in\Mod_{\Lambda}(B)$ such that $\Tor(M)=M$.
Note that $\Tor_{\Lambda^+}(A,B)$ is closed under taking subquotients and extensions in $\Mod_{\Lambda}(B)$.
Let $\Sigma(A,B)$ denote the collection of morphisms $f$ of $\Mod_{\Lambda}(B)$ such that its kernel $\Ker(f)$ and its cokernel $\Coker(f)$ belong to $\Tor_{\Lambda^+}(A,B)$.
Then we define an abelian category $\CC(A,B)=
\Mod_\Lambda(B)/\Tor_{\Lambda^+}(A,B)$ as
the localization 
\[
\CC(A,B)
=\Sigma(A,B)^{-1}\Mod_{\Lambda}(B)
\]
of $\Mod_{\Lambda}(B)$ with respect to the multiplicative system $\Sigma(A,B)$
(see, for example, Popescu \cite{P} for the notion of localization of categories).
We denote by 
\begin{equation}
\label{eq:omega1}
\omega(A,B)^*:\Mod_{\Lambda}(B)\to
\CC(A,B)
\end{equation}
the canonical exact functor.
It
admits a right adjoint 
\begin{equation}
\label{eq:omega2}
\omega(A,B)_*:\CC(A,B)
\to
\Mod_{\Lambda}(B),
\end{equation}
which is left exact.
It is known that $\omega(A,B)^*\circ\omega(A,B)_*\cong \Id$.
By taking the degree zero part of \eqref{eq:omega2} we obtain a left exact functor 
\begin{equation}
\label{eq:GammaAK}
\Gamma_{(A,B)}:\CC(A,B)
\to
\Mod(B(0)).
\end{equation}
The abelian category $\CC(A,B)$ has enough injectives, and we have the right derived functors
\begin{equation}
\label{eq:RGamma}
R^i\Gamma_{(A,B)}:\CC(A,B)
\to
\Mod(B(0))
\qquad(i\in\BZ)
\end{equation}
of \eqref{eq:GammaAK}.

We apply the above arguments to the case $A=B=A_\zeta$.
Then 
$\Tor(M)$ for $M\in\Mod_{\Lambda}(A_\zeta)$ consists of 
$m\in M$ such that there exists $\lambda\in\Lambda^+$ satisfying $A_\zeta(\lambda)m=\{0\}$ (see \cite[Lemma 3.4]{TI}).
We set 
\begin{equation}
\Mod(\CO_{\CB_\zeta})=
\CC(A_\zeta,A_\zeta).
\end{equation}
In this case the natural functors \eqref{eq:omega1}, \eqref{eq:omega2}, \eqref{eq:GammaAK} are simply denoted as
\begin{align}
\label{eq:omega1A}
\omega^*&:\Mod_{\Lambda}(A_\zeta)\to
\Mod(\CO_{\CB_\zeta}),\\
\omega_*&:\Mod(\CO_{\CB_\zeta})
\to
\Mod_{\Lambda}(A_\zeta),\\
\Gamma&:\Mod(\CO_{\CB_\zeta})
\to
\Mod(\BC).
\end{align}
\begin{remark}
{\rm
In the terminology of non-commutative algebraic geometry 
$\Mod(\CO_{\CB_\zeta})$ is the category of ``quasi-coherent sheaves'' on the quantized flag manifold $\CB_\zeta$, which is a ``non-commutative projective scheme''.
The notations ${\CB_\zeta}$, $\CO_{\CB_\zeta}$ have only symbolical meaning.
}
\end{remark}

\subsubsection{}
Using Lusztig's Frobenius homomorphism \eqref{eq:LFrobenius}  we will relate the quantized flag manifold $\CB_\zeta$ with the ordinary flag manifold $\CB=B^-\backslash G$.
Taking the dual Hopf algebras in \eqref{eq:LFrobenius} we obtain an injective homomorphism
$\BC[G]\to C_\zeta$
of Hopf algebras.
Moreover, its image is contained in the center of $C_\zeta$ (see Lusztig \cite{L2}).
We will regard $\BC[G]$ as a central Hopf subalgebra of $C_\zeta$ in the following.
Setting
\begin{align*}
A_1=&\{\varphi\in\BC[G]\mid
\varphi(ng)=\varphi(g)\,\,(n\in N^-,\,g\in G)\},\\
A_1(\lambda)
=&\{\varphi\in A_1\mid
\varphi(tg)=\theta_\lambda(t)\varphi(g)\,\,(t\in H,\,g\in G)\}
\qquad(\lambda\in\Lambda^+)
\end{align*}
we have a $\Lambda$-graded algebra
\[
A_1=\bigoplus_{\lambda\in\Lambda^+}A_1(\lambda).
\]
We have a left $G$-module structure of $A_1$ given by 
\[
(x\varphi)(g)=\varphi(gx)
\qquad(\varphi\in A_1, x, g\in G).
\]
In particular, $A_1$ is a $U(\Gg)$-module.
Moreover, for each $\lambda\in\Lambda^+$, $A_1(\lambda)$ is a $U(\Gg)$-submodule of $A_1$ which is an irreducible highest weight module with highest weight $\lambda$.
Regarding $\BC[G]$ as a subalgebra of $C_\zeta$ we have 
\[
A_1=A_\zeta\cap\BC[G],\qquad
A_1(\lambda)=A_\zeta(\ell\lambda)\cap\BC[G].
\]
Since the $\Lambda$-graded algebra $A_1$ is the
homogeneous coordinate algebra of the projective variety $\CB=B^-\backslash G$, we have an identification 
\begin{equation}
\label{eq:A1A1}
\Mod(\CO_\CB)=\CC(A_1,A_1)
\end{equation}
of abelian categories, where $\Mod(\CO_\CB)$ denotes the category of quasi-coherent $\CO_\CB$-modules on the ordinary flag manifold $\CB$.
We set 
\begin{equation}
\omega_{\CB*}=\omega(A_1,A_1)_*:\Mod(\CO_\CB)\to\Mod_\Lambda(A_1).
\end{equation}
For $\lambda\in\Lambda$ we denote by $\CO_\CB(\lambda)\in \Mod(\CO_\CB)$ the invertible $G$-equivariant $\CO_\CB$-module corresponding to $\lambda$.
Then under the identification \eqref{eq:A1A1} we have
\[
\omega_{\CB*}M=
\bigoplus_{\lambda\in\Lambda}\Gamma(\CB,M\otimes_{\CO_\CB}\CO_\CB(\lambda))
\qquad(M\in\Mod(\CO_\CB)),
\]
where $\Gamma(\CB,\,\,):\Mod(\CO_{\CB})\to\BC$ is the global section functor for the algebraic variety $\CB$.
In particular, the functor $\Gamma_{(A_1,A_1)}:\Mod(\CO_\CB)\to\Mod(\BC)$ is identified with 
$\Gamma(\CB,\,\,)$.

For a $\Lambda$-graded $\BC$-algebra $B$ we define a new $\Lambda$-graded  $\BC$-algebra $B^{(\ell)}$ by
\[
B^{(\ell)}(\lambda)=B(\ell\lambda)
\qquad(\lambda\in\Lambda).
\]
Let
\begin{equation}
\label{eq:(ell)}
(\,\,)^{(\ell)}:\Mod_\Lambda(B)\to\Mod_\Lambda(B^{(\ell)})
\end{equation}
be the exact functor given by 
\[
M^{(\ell)}(\lambda)=M(\ell\lambda)\qquad
(\lambda\in\Lambda)
\]
for $M\in\Mod_\Lambda(B)$.

We have the following results (\cite{TI}).
\begin{lemma}
\label{lem:Fr1}
Let $B$ be a $\Lambda$-graded $\BC$-algebra.
Assume that we are given a homomorphism $\jmath:A_\zeta\to B$ of $\Lambda$-graded $\BC$-algebras.
We denote by $\jmath':A_1\to B^{(\ell)}$ the induced homomorphism of $\Lambda$-graded $\BC$-algebras.
Assume 
\begin{align*}
\jmath(A_\zeta(\lambda)) B(\mu)&=B(\mu)\jmath(A_\zeta(\lambda))
\qquad(\lambda,\mu\in\Lambda),\\
\jmath'(A_1(\lambda))B^{(\ell)}(\mu)
&=B^{(\ell)}(\mu)\jmath'(A_1(\lambda))
\qquad(\lambda,\mu\in\Lambda).
\end{align*}
Then the exact functor 
\begin{equation*}
(\,\,)^{(\ell)}:\Mod_\Lambda(B)\to\Mod_\Lambda(B^{(\ell)})
\end{equation*} induces an equivalence
\begin{equation}
Fr_*:\CC(A_\zeta,B)
\to
\CC(A_1,B^{(\ell)})
\end{equation}
of abelian categories.
Moreover, we have
\begin{equation}
\label{eq:Frobenius}
\omega(A_1,B^{(\ell)})_{*}\circ Fr_*=(\,\,)^{(\ell)}\circ\omega(A_\zeta,B)_{*}.
\end{equation}
\end{lemma}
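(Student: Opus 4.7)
The plan has three stages: first establish that the exact functor $(\,\,)^{(\ell)}$ preserves the torsion subcategories in both directions, then construct a quasi-inverse via extension of scalars to obtain the equivalence $Fr_*$, and finally deduce the compatibility \eqref{eq:Frobenius} by identifying both sides as saturated representatives of the same object in $\CC(A_1,B^{(\ell)})$.

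The first key step is the equivalence: for $M\in\Mod_\Lambda(B)$ one has $M\in\Tor_{\Lambda^+}(A_\zeta,B)$ if and only if $M^{(\ell)}\in\Tor_{\Lambda^+}(A_1,B^{(\ell)})$. The forward direction is essentially formal from cofinality of $\ell\Lambda^+$ in $\Lambda^+$ and the inclusion $A_1(\lambda)\subseteq A_\zeta(\ell\lambda)$: an annihilator $A_\zeta(\lambda_0+\mu)m=0$ rescales to $A_1(\lambda_1+\mu')m=0$ for $\lambda_1$ chosen with $\ell\lambda_1-\lambda_0\in\Lambda^+$. The converse is the main technical content. For $m\in M(\nu)$, observe that $A_\zeta(\lambda)m$ lands in $M^{(\ell)}$ whenever $\lambda+\nu\in\ell\Lambda$; since $A_\zeta(\lambda)$ is finite-dimensional one derives a uniform $A_1$-annihilator on a basis image and, invoking the graded commutativity $\jmath(A_\zeta(\cdot))B(\cdot)=B(\cdot)\jmath(A_\zeta(\cdot))$ to shuffle degrees, produces a single $\lambda_0\in\Lambda^+$ with $A_\zeta(\lambda_0+\mu)m=0$ for every $\mu\in\Lambda^+$.

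With torsion preservation, $(\,\,)^{(\ell)}$ descends to an exact $Fr_*:\CC(A_\zeta,B)\to\CC(A_1,B^{(\ell)})$. For the equivalence, define the left-adjoint extension-of-scalars functor $\iota_!N=B\otimes_{B^{(\ell)}}N$, graded by $\deg(b\otimes n)=\deg_B(b)+\ell\deg_{B^{(\ell)}}(n)$. A degree-count, noting that only $\sigma\in\ell\Lambda$ contribute to $(\iota_!N)(\ell\rho)$ after imposing the $B^{(\ell)}$-tensor relations, gives a canonical isomorphism $(\iota_!N)^{(\ell)}\cong N$; the counit $\iota_!M^{(\ell)}\to M$, $b\otimes m\mapsto bm$, has kernel and cokernel in $\Tor_{\Lambda^+}(A_\zeta,B)$ by the same degree-shifting argument as the converse of torsion preservation. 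Hence $\iota_!$ descends to a quasi-inverse of $Fr_*$.

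Finally, to establish \eqref{eq:Frobenius}: the square $\omega(A_1,B^{(\ell)})^*\circ(\,\,)^{(\ell)}=Fr_*\circ\omega(A_\zeta,B)^*$ commutes by construction of $Fr_*$, so applying $\omega(A_1,B^{(\ell)})^*$ to either side of \eqref{eq:Frobenius} yields $Fr_*X$; since torsion preservation shows that $(\omega(A_\zeta,B)_*X)^{(\ell)}$ has no $A_1$-torsion and no non-trivial torsion-extension, it is itself a saturated object in $\Mod_\Lambda(B^{(\ell)})$ and must coincide with $\omega(A_1,B^{(\ell)})_*Fr_*X$ by uniqueness of saturation. The main obstacle throughout is the converse direction of the torsion comparison — producing a uniform annihilator for $m\in M(\nu)$ with $\nu$ arbitrary, using only information visible through the $\ell\Lambda$-graded part $M^{(\ell)}$ — which is precisely where the graded commutativity hypothesis is indispensable.
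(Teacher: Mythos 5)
The paper actually states this lemma as a citation from the author's previous work \cite{TI} and does not reprove it here, so there is no ``paper's own proof'' to compare against directly; I can only assess the proposal on its own. The high-level framework you chose --- forward torsion preservation to descend $(\,\,)^{(\ell)}$, extension-of-scalars $\iota_!N=B\otimes_{B^{(\ell)}}N$ as a candidate quasi-inverse, the degree decomposition $B=\bigoplus_{c}B_{[c]}$ over cosets of $\ell\Lambda$ to get $(\iota_!N)^{(\ell)}\cong N$, and then saturation/uniqueness for \eqref{eq:Frobenius} --- is the standard Verevkin--Artin--Zhang route and is a sound plan in outline.

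However, there is a genuine gap at exactly the point you yourself flag as ``the main technical content'': the converse direction of the torsion comparison. Your argument starts from $m\in M(\nu)$, picks $\lambda_1$ with $\lambda_1+\nu\in\ell\Lambda$, and from $A_1$-torsion of the finite-dimensional image $A_\zeta(\lambda_1)m\subset M^{(\ell)}$ deduces $\jmath'(A_1(\lambda_2+\mu))\,\jmath(A_\zeta(\lambda_1))\,m=0$ for $\mu\in\Lambda^+$. But $\jmath'(A_1(\lambda_2+\mu))\,\jmath(A_\zeta(\lambda_1))$ is a \emph{proper} subspace of $\jmath(A_\zeta(\ell(\lambda_2+\mu)+\lambda_1))$ --- indeed $A_1(\sigma)=A_\zeta(\ell\sigma)\cap\BC[G]$ is in general strictly smaller than $A_\zeta(\ell\sigma)$ --- and the graded commutativity hypothesis $\jmath(A_\zeta(\lambda))B(\mu)=B(\mu)\jmath(A_\zeta(\lambda))$ only moves factors past each other; it does not enlarge this subspace to all of $A_\zeta(\ell(\lambda_2+\mu)+\lambda_1)$, nor does it reach degrees of $A_\zeta$ outside the single coset $\lambda_1+\ell\Lambda$. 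So the argument controls $m$ only under a thin subspace of one residue class, not under $\jmath(A_\zeta(\lambda_0+\Lambda^+))$ for a single $\lambda_0$ as required.

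What bridges this gap, and is indispensable for the lemma to be true at all, is a structural fact about $A_\zeta$ and $A_1$ that the proposal never invokes: $A_\zeta$ is a \emph{finite} graded $A_1$-module (coming from $C_\zeta$ being module-finite over the central Frobenius subalgebra $\BC[G]$, together with $A_\zeta$ being generated in the fundamental-weight degrees with $A_\zeta(\lambda)A_\zeta(\mu)=A_\zeta(\lambda+\mu)$ for large $\lambda,\mu$). With finitely many $A_1$-module generators $a_i\in A_\zeta(\gamma_i)$ one can write $A_\zeta(\lambda)=\sum_i A_1(\cdot)a_i$, reduce $A_\zeta$-torsion to $A_1$-torsion across all cosets, and also carry out the steps you need for $\iota_!$ to preserve torsion and for the counit $\iota_!M^{(\ell)}\to M$ to have torsion kernel and cokernel. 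Without this input, the converse torsion implication is simply false in the stated generality (e.g.\ if $A_1$ were a much smaller central subalgebra, $M^{(\ell)}$ being $A_1$-torsion would impose almost no constraint on $M$ as an $A_\zeta$-module), and the rest of the argument does not go through. So the plan is right, but the proof as written is missing the one fact that makes it work.
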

\begin{lemma}
\label{lem:Fr2}
Let $F$ be a $\Lambda$-graded $\BC$-algebra, and let $A_1\to F$ be a homomorphism of $\Lambda$-graded $\BC$-algebras.
Assume that $\Image(A_1\to F)$ is central in $F$.
Regard $F$ as an object of $\Mod_\Lambda(A_1)$ and consider 
$\omega_\CB^*F\in\Mod(\CO_{\CB})$.
Then the multiplication of $F$ induces an $\CO_\CB$-algebra structure of $\omega_\CB^*F$, and we have an identification
\begin{equation}
\label{eq:lem:Fr2}
\CC(A_1,F)
=
\Mod(\omega_\CB^*F),
\end{equation}
of abelian categories,
where $\Mod(\omega_\CB^*F)$ denotes the category of quasi-coherent $\omega_\CB^*F$-modules.
Moreover, under the identification \eqref{eq:lem:Fr2}
we have 
\[
\Gamma_{(A_1,F)}(M)=\Gamma(\CB,M)
\in\Mod(F(0))
\qquad(M\in\Mod(\omega_\CB^*F)).
\]
\end{lemma}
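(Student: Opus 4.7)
The plan is to treat Lemma \ref{lem:Fr2} as a relative--Proj statement transported through the localization functor $\omega_\CB^*=\omega(A_1,A_1)^*$, using that $\jmath(A_1)$ is central in $F$ and that $A_1$ itself is a commutative $\Lambda$-graded ring (the homogeneous coordinate ring of $\CB$).

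First I would establish the $\CO_\CB$-algebra structure on $\omega_\CB^*F$. Centrality of $\jmath(A_1)$ in $F$ allows one to form the balanced tensor product $F\otimes_{A_1}F$ in $\Mod_\Lambda(A_1)$, and the multiplication of $F$ yields a morphism $m:F\otimes_{A_1}F\to F$ in $\Mod_\Lambda(A_1)$. The localization $\omega_\CB^*$ is exact, and one checks directly that it is compatible with the $\otimes_{A_1}$--product (that is, $\omega_\CB^*(F\otimes_{A_1}F)\cong\omega_\CB^*F\otimes_{\CO_\CB}\omega_\CB^*F$). Applying $\omega_\CB^*$ to $m$ and to the unit $A_1\to F$ gives $\omega_\CB^*F$ the structure of an $\CO_\CB$-algebra; associativity, unitality, and the centrality of $\CO_\CB\hookrightarrow\omega_\CB^*F$ are obtained by applying $\omega_\CB^*$ to the corresponding identities at the graded level.

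Next, I would construct the equivalence $\CC(A_1,F)\cong\Mod(\omega_\CB^*F)$. For $M\in\Mod_\Lambda(F)$, the $F$-action on $M$ is in particular a morphism $F\otimes_{A_1}M\to M$ in $\Mod_\Lambda(A_1)$, so $\omega_\CB^*$ endows $\omega_\CB^*M$ with an $\omega_\CB^*F$-module structure; this produces a functor $\Phi:\Mod_\Lambda(F)\to\Mod(\omega_\CB^*F)$. Objects of $\Tor_{\Lambda^+}(A_1,F)$ are killed by $\omega^*=\omega(A_1,A_1)^*$, hence by $\Phi$, so $\Phi$ factors to a functor $\overline\Phi:\CC(A_1,F)\to\Mod(\omega_\CB^*F)$. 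Conversely, for $\CN\in\Mod(\omega_\CB^*F)$, the graded $A_1$-module $\omega_{\CB *}\CN=\bigoplus_{\lambda\in\Lambda}\Gamma(\CB,\CN\otimes_{\CO_\CB}\CO_\CB(\lambda))$ inherits a graded $F$-module structure from the $\omega_\CB^*F$-action on $\CN$ (centrality ensures this action is $A_1$-balanced), giving $\Psi:\Mod(\omega_\CB^*F)\to\CC(A_1,F)$. The identity $\omega_\CB^*\circ\omega_{\CB*}\cong\Id$ (a formal property of the localization, applicable to $\CN$ forgotten to $\Mod(\CO_\CB)$) yields $\overline\Phi\circ\Psi\cong\Id$, and the adjunction $\omega^*\dashv\omega_*$ together with the universal property of the localization gives the other direction $\Psi\circ\overline\Phi\cong\Id$.

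Finally, the formula for global sections follows by tracking underlying $\CO_\CB$-modules: by definition $\Gamma_{(A_1,F)}(M)$ is the degree-$0$ component of $\omega(A_1,F)_*M$, and the equivalence above identifies $\omega(A_1,F)_*M$, as a graded $A_1$-module, with $\omega_{\CB*}$ applied to $M$ viewed as an $\CO_\CB$-module; the degree-$0$ part is then $\Gamma(\CB,M)$, endowed with an $F(0)$-action by restricting the graded $F$-action to degree zero. The main technical obstacle in this plan is the compatibility $\omega_\CB^*(X\otimes_{A_1}Y)\cong\omega_\CB^*X\otimes_{\CO_\CB}\omega_\CB^*Y$ used throughout Steps 1 and 2: this identification makes essential use of the commutativity of $A_1$ and the centrality of $\jmath(A_1)$ in $F$, and without it neither the algebra structure of $\omega_\CB^*F$ nor the well-definedness of the $\omega_\CB^*F$-action on $\omega_\CB^*M$ can be set up cleanly. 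Everything else reduces to standard localization/adjunction diagram chases.
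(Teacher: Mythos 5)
The paper does not actually prove Lemma \ref{lem:Fr2}: both Lemma \ref{lem:Fr1} and Lemma \ref{lem:Fr2} are stated under the heading ``We have the following results (\cite{TI})'', with the proofs deferred to the predecessor paper \cite{TI} (arXiv:1002.0113). There is therefore no argument in the present paper to compare yours against.

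On its own terms your plan is a reasonable sketch along standard Serre-equivalence lines (a sheaf of $\CO_\CB$-algebras presented by a $\Lambda$-graded $A_1$-algebra with central image). The step you rightly flag, the monoidality isomorphism $\omega_\CB^*(X\otimes_{A_1}Y)\cong\omega_\CB^*X\otimes_{\CO_\CB}\omega_\CB^*Y$, does need an honest check for the multi-graded ring $A_1$: it is verified on the standard charts $D_+(f)$ by using that $A_1[f^{-1}]$ contains invertible elements of every relevant degree, so that $X_{(f)}\otimes_{(A_1)_{(f)}}Y_{(f)}\to(X\otimes_{A_1}Y)_{(f)}$ is an isomorphism. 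Two further points worth stating explicitly. First, when you endow $\omega_{\CB*}\CN$ with a graded $F$-module structure, you only have a canonical map $F(\mu)\to\Gamma(\CB,\omega_\CB^*F\otimes_{\CO_\CB}\CO_\CB(\mu))$, not an isomorphism; that is still enough to define the $F$-action by composing with $\Gamma(\CB,\omega_\CB^*F\otimes\CO_\CB(\mu))\times\Gamma(\CB,\CN\otimes\CO_\CB(\lambda))\to\Gamma(\CB,\CN\otimes\CO_\CB(\lambda+\mu))$. Second, for $\Psi\circ\overline{\Phi}\cong\Id$ you need the adjunction unit $M\to\omega_{\CB*}\omega_\CB^*M$ to be a morphism of graded $F$-modules with kernel and cokernel in $\Tor_{\Lambda^+}(A_1,F)$; the former follows from naturality of the unit together with the fact that the $F$-action on $\omega_{\CB*}\omega_\CB^*M$ is defined through $\omega_\CB^*$, and the latter is detected after forgetting to $\Mod_\Lambda(A_1)$, where it is the usual Serre property. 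With those details filled in, your outline is sound and gives the final formula for $\Gamma_{(A_1,F)}$ by reading off the degree-zero part exactly as you say.
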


We define an $\CO_\CB$-algebra $Fr_*\CO_{\CB_\zeta}$ by
\[
Fr_*\CO_{\CB_\zeta}=\omega_\CB^*(A_\zeta^{(\ell)}).
\]
We denote by $\Mod(Fr_*\CO_{\CB_\zeta})$ the category of quasi-coherent $Fr_*\CO_{\CB_\zeta}$-modules.
By Lemma \ref{lem:Fr1} and Lemma \ref{lem:Fr2} we have the following.
\begin{lemma}
\label{lem:eq-Fr}
We have an equivalence 
\[
Fr_*:\Mod(\CO_{\CB_\zeta})\to\Mod(Fr_*\CO_{\CB_\zeta})
\]
of abelian categories.
Moreover, 
for $M\in\Mod(\CO_{\CB_\zeta})$ we have
\[
R^i\Gamma(M)\simeq R^i\Gamma(\CB,Fr_*(M)),
\]
where $\Gamma(\CB,\,\,):\Mod(\CO_{\CB})\to\Mod(\BC)$ in the right side is the global section functor for $\CB$.
\end{lemma}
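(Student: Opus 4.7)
The plan is to chain Lemma \ref{lem:Fr1} and Lemma \ref{lem:Fr2} in the specific case $B = A_\zeta$. First I apply Lemma \ref{lem:Fr1} to the identity homomorphism $\jmath = \id : A_\zeta \to A_\zeta$, which induces the map $\jmath' : A_1 \hookrightarrow A_\zeta^{(\ell)}$ given by the inclusion $A_1 = A_\zeta \cap \BC[G] \subset A_\zeta$ in shifted grading. The two required commutation hypotheses then read $A_\zeta(\lambda) A_\zeta(\mu) = A_\zeta(\mu) A_\zeta(\lambda)$ and $A_1(\lambda) A_\zeta(\ell\mu) = A_\zeta(\ell\mu) A_1(\lambda)$ as subsets of $A_\zeta$. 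The second is immediate: as recalled just before this lemma, $A_1 \subset \BC[G]$ embeds in the center of $C_\zeta$ via the dual of Lusztig's Frobenius, hence in the center of $A_\zeta$. The first is the standard quasi-commutativity of the quantum coordinate ring at a root of unity, already established in \cite{TI}. Lemma \ref{lem:Fr1} then supplies an equivalence
$$
Fr_* : \Mod(\CO_{\CB_\zeta}) = \CC(A_\zeta, A_\zeta) \;\xrightarrow{\sim}\; \CC(A_1, A_\zeta^{(\ell)})
$$
together with the compatibility $\omega(A_1, A_\zeta^{(\ell)})_* \circ Fr_* = (\,\,)^{(\ell)} \circ \omega_*$.

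Next I apply Lemma \ref{lem:Fr2} with $F = A_\zeta^{(\ell)}$; the centrality hypothesis $\Image(A_1 \to F) \subset Z(F)$ holds again by the central embedding $A_1 \subset \BC[G] \subset Z(A_\zeta)$. Combined with the definition $Fr_*\CO_{\CB_\zeta} = \omega_\CB^* A_\zeta^{(\ell)}$, this produces a canonical identification
$$
\CC(A_1, A_\zeta^{(\ell)}) = \Mod(\omega_\CB^* A_\zeta^{(\ell)}) = \Mod(Fr_*\CO_{\CB_\zeta})
$$
under which $\Gamma_{(A_1, A_\zeta^{(\ell)})}$ coincides with the ordinary global section functor $\Gamma(\CB, -)$. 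Composing with the equivalence from the first step yields the asserted equivalence $Fr_* : \Mod(\CO_{\CB_\zeta}) \xrightarrow{\sim} \Mod(Fr_*\CO_{\CB_\zeta})$.

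For the cohomological comparison I would observe that $\Gamma(M)$ is, by construction, the degree-zero piece of $\omega_*(M) \in \Mod_\Lambda(A_\zeta)$, and that the operation $(-)^{(\ell)}$ preserves the degree-zero piece since $\ell \cdot 0 = 0$. Applying the compatibility from Lemma \ref{lem:Fr1} then gives a canonical isomorphism of functors
$$
\Gamma(\CB, Fr_*(M)) = \bigl(\omega(A_1, A_\zeta^{(\ell)})_* Fr_*(M)\bigr)(0) = \bigl((\omega_* M)^{(\ell)}\bigr)(0) = (\omega_* M)(0) = \Gamma(M).
$$
Since $Fr_*$ is an exact equivalence of abelian categories, it carries injective resolutions to injective resolutions, so this isomorphism of left exact functors promotes at once to the isomorphisms $R^i\Gamma(M) \simeq R^i\Gamma(\CB, Fr_*(M))$ of all right derived functors.

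The argument is essentially formal: all the serious work has been absorbed into Lemma \ref{lem:Fr1} and Lemma \ref{lem:Fr2}, and the only nontrivial input is the quasi-commutativity property of $A_\zeta$ recorded in \cite{TI}. I therefore do not anticipate any significant obstacle; the only point that requires care is to ensure that the functorial identification supplied by Lemma \ref{lem:Fr2} is compatible with the compatibility relation from Lemma \ref{lem:Fr1}, and this is built into their statements.
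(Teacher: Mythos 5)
Your proof is correct and follows exactly the route the paper takes: the paper's entire proof is the one-line remark ``By Lemma \ref{lem:Fr1} and Lemma \ref{lem:Fr2} we have the following,'' and what you have written is precisely the instantiation of Lemma \ref{lem:Fr1} with $B=A_\zeta$, $\jmath=\id$ composed with Lemma \ref{lem:Fr2} with $F=A_\zeta^{(\ell)}$, together with the derived-functor bookkeeping via the compatibility relation \eqref{eq:Frobenius} and the exactness of $Fr_*$. Your checks that the commutation hypotheses reduce to the quasi-commutativity of $A_\zeta$ (already needed to make $\Mod(\CO_{\CB_\zeta})=\CC(A_\zeta,A_\zeta)$ meaningful) and to the centrality of $A_1\subset\BC[G]\subset Z(C_\zeta)$ are the right ones.
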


\section{The category of $D$-modules}
\label{sec:D}
\subsection{Ring of differential operators}
\subsubsection{}
We define a subalgebra $D_\BF$ of $\End_\BF(A_\BF)$ by
\[
D_\BF=
\langle
\ell_\varphi, r_\varphi, \deru_u, \sigma_\lambda\mid
\varphi\in A_\BF, u\in U_\BF, \lambda\in\Lambda\rangle,
\]
where 
\[
\ell_\varphi(\psi)=\varphi\psi,\quad
r_\varphi(\psi)=\psi\varphi,\quad
\deru_u(\psi)=u\cdot\psi,\quad
\sigma_\lambda(\psi)=q^{(\lambda,\mu)}\psi
\]
for $\psi\in A_\BF(\mu)$.
In fact we have
\[
D_\BF=
\langle
\ell_\varphi, \deru_u, \sigma_\lambda\mid
\varphi\in A_\BF, u\in U_\BF, \lambda\in\Lambda\rangle
\]
by \cite[Lemma 4.1]{TI}.

We have a natural grading
\begin{align*}
&D_\BF=\bigoplus_{\lambda\in\Lambda^+}D_\BF(\lambda),\\
&D_\BF(\lambda)=\{\Phi\in D_\BF\mid
\Phi(A_\BF(\mu))\subset A_\BF(\lambda+\mu)\quad(\mu\in\Lambda)\}\qquad(\lambda\in\Lambda)
\end{align*}
of $D_\BF$.
It is easily checked that
\begin{align*}
\deru_u\ell_\varphi=&\sum_{(u)}\ell_{u_{(0)}\cdot\varphi}\deru_{u_{(1)}}
&(u\in U_\BF, \varphi\in A_\BF),\\
\deru_u\sigma_\lambda=&\sigma_\lambda\deru_u
&(u\in U_\BF, \lambda\in\Lambda),\\
\sigma_\lambda\ell_\varphi=&
q^{(\lambda,\mu)}\ell_\varphi\sigma_\lambda
&(\lambda\in\Lambda, \varphi\in A_\BF(\mu)).
\end{align*}

Set
\[
E_\BF=A_\BF\otimes U_\BF\otimes\BF[\Lambda].
\]
We have a natural $\BF$-algebra structure of $E_\BF$ such that
$A_\BF\otimes 1\otimes 1$, 
$1\otimes U_\BF\otimes 1$,
$1\otimes 1\otimes\BF[\Lambda]$
are subalgebras of $E_\BF$ naturally isomorphic to 
$A_\BF$, 
$U_\BF$,
$\BF[\Lambda]$
respectively and that we have the relations
\begin{align*}
u\varphi=&\sum_{(u)}(u_{(0)}\cdot\varphi){u_{(1)}}
&(u\in U_\BF, \varphi\in A_\BF),\\
u e(\lambda)=&e(\lambda)u
&(u\in U_\BF, \lambda\in\Lambda),\\
e(\lambda)\varphi=&
q^{(\lambda,\mu)}\varphi e(\lambda)
&(\lambda\in\Lambda, \varphi\in A_\BF(\mu))
\end{align*}
in $E_\BF$.
Here, we identify $A_\BF\otimes 1\otimes 1$, 
$1\otimes U_\BF\otimes 1$,
$1\otimes 1\otimes\BF[\Lambda]$
with 
$A_\BF$, 
$U_\BF$,
$\BF[\Lambda]$ respectively.
Then we have a surjective algebra homomorphism
\begin{equation}
\label{eq:EtoD}
E_\BF\to D_\BF
\end{equation}
sending $\varphi\in A_\BF$, $u\in U_\BF$, $e(\lambda)\in\BF[\Lambda]\,(\lambda\in\Lambda)$ to
$\ell_\varphi$, $\deru_u$, $\sigma_\lambda$ respectively.
Moreover, $E_\BF$ has an obvious $\Lambda$-grading so that \eqref{eq:EtoD} preserves the $\Lambda$-grading.

\subsubsection{}
Set
\begin{align*}
D_\BA&=
\langle
\ell_\varphi, r_\varphi, \deru_u,\sigma_\lambda\mid
\varphi\in A_\BA, u\in U_\BA, \lambda\in\Lambda\rangle_{\BA-{\rm{alg}}}\subset D_\BF,\\
E_\BA&=
A_\BA\otimes U_\BA\otimes\BA[\Lambda]
\subset E_\BA.
\end{align*}
They are $\Lambda$-graded $\BA$-subalgebras of $D_\BF$ and $E_\BF$ respectively.
Again we have
\[
D_\BA=
\langle
\ell_\varphi, \deru_u,\sigma_\lambda\mid
\varphi\in A_\BA, u\in U_\BA, \lambda\in\Lambda\rangle_{\BA-{\rm{alg}}}.
\]
by \cite{TI}.
In particular, we have a surjective homomorphism
\[
E_\BA\to D_\BA
\]
of $\Lambda$-graded algebras.
Note that there is a canonical embedding
\[
D_\BA\to\End_\BA(A_\BA).
\]

\subsubsection{}
We set
\[
D_\zeta=\BC\otimes_\BA D_\BA,\qquad
E_\zeta=\BC\otimes_\BA E_\BA=A_\zeta\otimes U_\zeta\otimes\BC[\Lambda].
\]
$D_\zeta$ is a $\Lambda$-graded $\BC$-algebra generated by elements of the form
\[
\ell_\varphi, \,\,\deru_u,\,\,\sigma_\lambda\qquad
(\varphi\in A_\zeta, \,\,u\in U_\zeta, \,\,\lambda\in\Lambda).
\]
We have a surjective homomorphism
\[
E_\zeta\to D_\zeta
\]
of $\Lambda$-graded $\BC$-algebras.

\begin{lemma}
\label{lem:HCD}
Let $z\in Z_{Har}(U_\zeta)$, and
write $\iota(z)=\sum_{\lambda\in\Lambda}c_\lambda k_{2\lambda}
\quad(c_\lambda\in\BC)$.
Then we have
\[
\deru_{z}=\sum_{\lambda\in\Lambda}c_\lambda \sigma_{2\lambda}.
\]
\end{lemma}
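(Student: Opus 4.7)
The plan is to verify the stated equality of operators on each graded piece $A_\zeta(\mu)$ of $A_\zeta=\bigoplus_{\mu\in\Lambda^+}A_\zeta(\mu)$. From the definition of $\sigma$, the right-hand side $\sum_\lambda c_\lambda\sigma_{2\lambda}$ acts on $A_\zeta(\mu)$ by the scalar $\sum_\lambda c_\lambda\zeta^{(2\lambda,\mu)}$, so it suffices to show that $\deru_z$ also acts by this scalar.

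I would argue first over the generic field $\BF$ and then specialize. A standard matrix-coefficient computation (parallel to the description of $A_1(\lambda)$ as an irreducible $U(\Gg)$-module given earlier) shows that $A_\BF(\mu)$, as a left $U_\BF$-module, is isomorphic to the finite-dimensional irreducible $U_\BF$-module $V(\mu)$ of highest weight $\mu$. Lift $z$ to an element $Z\in Z(U_\BF)\cap U_\BA$. By Schur's lemma, $Z$ acts on $V(\mu)\cong A_\BF(\mu)$ by a scalar $\chi_\mu(Z)\in\BF$. Since $Z\in U_\BA$ preserves the $\BA$-lattice $A_\BA(\mu)\subset A_\BF(\mu)$, multiplication by $\chi_\mu(Z)$ stabilizes $A_\BA(\mu)$; using $A_\BA(\mu)\otimes_\BA\BF=A_\BF(\mu)$ one concludes $\chi_\mu(Z)\in\BA$. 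Specializing at $q^{1/|\Lambda/Q|}=\zeta'$ then shows that $z$ acts on $A_\zeta(\mu)=\BC\otimes_\BA A_\BA(\mu)$ as the scalar $\chi_\mu(Z)|_{q=\zeta}\in\BC$.

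To identify this scalar, I would apply $Z=\sum_a Z_a^-Z_a^0Z_a^+$ (triangular decomposition) to a highest weight vector $v_\mu$ of $V(\mu)$. Since $Z_a^+v_\mu=\varepsilon(Z_a^+)v_\mu$ and $Zv_\mu$ has weight $\mu$ (so its lower-weight contributions must cancel), only the triangular summands with $Z_a^-\in U_{\BF,0}^-=\BF$ contribute, yielding $\chi_\mu(Z)=\iota_\BF(Z)|_\mu$; here $\iota_\BF$ is the $\BF$-analogue of $\iota$ (via the identification $U_\BF^0\cong\BF[\Lambda]$, $k_\nu\leftrightarrow e(\nu)$), and $|_\mu$ denotes the evaluation $k_\nu\mapsto q^{(\nu,\mu)}$. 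Specializing $q\mapsto\zeta$ and using $\iota(z)=\sum_\lambda c_\lambda k_{2\lambda}$ gives $\chi_\mu(Z)|_{q=\zeta}=\sum_\lambda c_\lambda\zeta^{(2\lambda,\mu)}$, matching the scalar for the right-hand side.

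The only subtle ingredient is the integrality assertion $\chi_\mu(Z)\in\BA$ needed to justify the specialization; the rest is a textbook Harish-Chandra computation.
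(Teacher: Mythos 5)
Your generic-$q$ computation is correct and is essentially the content of the reference the paper relies on: $A_\BF(\mu)$ is the irreducible highest-weight module of weight $\mu$, a central $Z$ acts on it by the Harish--Chandra scalar $\chi_\mu(\iota_\BF(Z))$, and this matches $\sum_\lambda d_\lambda q^{(2\lambda,\mu)}$. The paper's own proof simply cites \cite{T2} for this $\BF$-level identity and specializes, so at the mathematical heart you are on the same route, just filling in the argument instead of quoting it.

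However, your framing of the reduction has a genuine gap. The lemma asserts an equality \emph{in $D_\zeta$}, and the Remark immediately after it records precisely that the natural map $D_\zeta\to\End_\BC(A_\zeta)$ is \emph{not} injective. So ``verify the stated equality of operators on each graded piece $A_\zeta(\mu)$'' does not suffice at $q=\zeta$: showing that $\deru_z$ and $\sum_\lambda c_\lambda\sigma_{2\lambda}$ act identically on $A_\zeta$ is strictly weaker than what is claimed, and this is exactly the subtlety your last paragraph does not name. The integrality of the scalar $\chi_\mu(Z)$ is not the issue to track.

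The fix is already latent in your ``argue over $\BF$, then specialize,'' but the specialization must be carried out at the level of $D$ rather than of endomorphisms of $A_\zeta$: (i) prove $\deru_Z=\sum_\lambda d_\lambda\sigma_{2\lambda}$ in $D_\BF$, where pointwise checking on $A_\BF(\mu)$ is legitimate since $D_\BF\subset\End_\BF(A_\BF)$ by construction; (ii) observe that both sides lie in the $\BA$-form $D_\BA$ --- for this you need $d_\lambda\in\BA$, which follows directly from the triangular decomposition $U_\BA\cong U_\BA^-\otimes_\BA U_\BA^0\otimes_\BA U_\BA^+$ (so $\iota_\BF(Z)\in U_\BA^0=\BA[\Lambda]$), a cleaner route than your lattice-stability argument for $\chi_\mu(Z)$; (iii) since $D_\BA\hookrightarrow D_\BF$, the identity holds in $D_\BA$; (iv) apply $\BC\otimes_\BA(-)$ to land in $D_\zeta$. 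With that repair your proof is complete.
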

\begin{proof}
This follows from the corresponding statement over $\BF$, which is given in \cite{T2}
\end{proof}
\begin{remark}
{\rm
The natural algebra homomorphism
$D_\zeta\to\End_\BC(A_\zeta)$
is not injective.
}
\end{remark}

\subsubsection{}

Define an $\CO_\CB$-algebra  $Fr_*\DD_{\CB_\zeta}$ by
\[
Fr_*\DD_{\CB_\zeta}=\omega_\CB^*D_\zeta^{(\ell)}.
\]
We define 
$ZD_\zeta^{(\ell)}$ to be the central subalgebra of 
$D_\zeta^{(\ell)}$ generated by the elements of the form
\[
\ell_\varphi,\,\,
\deru_u,\,\, \sigma_\lambda\qquad
(\varphi\in A_1,\,\,u\in Z_{Fr}(U_\zeta),\,\,\lambda\in\Lambda),
\]
and set
\[
\CZ_\zeta=\omega_\CB^*ZD_\zeta^{(\ell)}.
\]
It is a central subalgebra of $Fr_*\DD_{\CB_\zeta}$.
Define a subvariety $\CV$ of $\CB\times K\times H$ by
\[
\CV=
\{
(B^-g,k,t)\in\CB\times K\times H\mid
g\kappa(k)g^{-1}\in t^{2\ell}N^-\},
\]
where $\kappa:K\to G$ is given by $\kappa(k_1,k_2)=k_1k_2^{-1}$.
We denote by 
\[
p_\CV:\CV\to\CB
\]
the projection.
Now we can state the main results of  \cite{TI}.
\begin{theorem}[\cite{TI}]
\label{thm:TI1}
The $\CO_\CB$-algebra $\CZ_\zeta$ is naturally isomorphic to $p_{\CV*}\CO_\CV$.
\end{theorem}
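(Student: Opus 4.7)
The plan is to identify $\CZ_\zeta$ with $p_{\CV*}\CO_\CV$ by first realizing the $\Lambda$-graded $\BC$-algebra $ZD_\zeta^{(\ell)}$ as the homogeneous coordinate algebra of $\CV$ over $\CB$, and then applying the functor $\omega_\CB^*$ together with Lemma \ref{lem:Fr2}. Concretely, I would construct three pairwise commuting $\BC$-algebra homomorphisms
\begin{align*}
a\colon A_1 &\to ZD_\zeta^{(\ell)}, & a(\varphi) &= \ell_\varphi,\\
b\colon \BC[K]\cong Z_{Fr}(U_\zeta) &\to ZD_\zeta^{(\ell)}, & b(u) &= \deru_u,\\
c\colon \BC[\Lambda] &\to ZD_\zeta^{(\ell)}, & c(e(\lambda)) &= \sigma_\lambda,
\end{align*}
where $a$ is $\Lambda$-graded (sending $A_1(\mu)$ to degree $\mu$) while $b$ and $c$ land in degree $0$. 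Pairwise commutativity is verified directly: $[\deru_u,\sigma_\lambda]=0$ is built into the relations of $D_\zeta$; $[\sigma_\lambda,\ell_\varphi]=0$ for $\varphi\in A_1(\mu)\subset A_\zeta(\ell\mu)$ because the relevant scalar $\zeta^{\ell(\lambda,\mu)}=1$; and $[\deru_u,\ell_\varphi]=0$ for $u\in Z_{Fr}(U_\zeta)$ follows by expanding $\deru_u\ell_\varphi=\sum \ell_{u_{(0)}\cdot\varphi}\deru_{u_{(1)}}$ using the centrality of the Hopf subalgebra $\BC[G]\supset A_1$ in $C_\zeta$ together with the fact that the $U_\zeta^L$-action on $\BC[G]$ factors through Lusztig's Frobenius \eqref{eq:LFrobenius}. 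Combining them yields a surjective $\Lambda$-graded homomorphism $\Phi\colon A_1\otimes\BC[K]\otimes\BC[\Lambda]\to ZD_\zeta^{(\ell)}$.

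The heart of the matter is computing $\Ker\Phi$ and identifying it with the ideal cutting out $\CV$ inside $\CB\times K\times H$. The crucial relation between the three families of coordinates comes from the overlap of the Harish-Chandra and Frobenius centers: for any $z\in Z_{Har}(U_\zeta)\cap Z_{Fr}(U_\zeta)$, Lemma \ref{lem:HCD} gives $\deru_z=\sum_\lambda c_\lambda\sigma_{2\lambda}$ where $\iota(z)=\sum c_\lambda k_{2\lambda}$, so $b(z)$ (viewing $z$ as a function on $K$) and $c(\iota(z))$ must agree in $ZD_\zeta^{(\ell)}$. In view of the isomorphism $Z(U_\zeta)\cong Z_{Har}\otimes_{Z_{Har}\cap Z_{Fr}} Z_{Fr}$ and the identification \eqref{eq:HC-center}, this equation precisely encodes the condition that $(k,t)$ satisfies $\kappa(k)\sim t^{2\ell}$ up to conjugacy modulo $N^-$: the shift by $2$ originates in $\BC[2\Lambda]^{W\circ}$, while the $\ell$-th power comes from the Frobenius. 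The appearance of the conjugating factor $g$ is forced upon passing to the quasi-coherent sheaf over $\CB$: at a point $B^-g\in\CB$ one trivializes the $A_1$-module structure by choosing a lift $g\in G$, and the ambiguity of $g$ modulo $B^-$ together with $H$ normalizing $N^-$ is exactly what makes the condition $g\kappa(k)g^{-1}\in t^{2\ell}N^-$ well-defined on $\CB$.

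To conclude, I would verify that $\Ker\Phi$ is exactly the ideal of $\CV$, with no further relations, by comparing graded dimensions degree by degree, or equivalently by localizing to a Schubert affine $U\subset\CB$ (e.g.\ the big cell) where both sides become explicitly computable after inverting suitable elements of $A_1$. Applying the exact functor $\omega_\CB^*$ and invoking Lemma \ref{lem:Fr2} with $F=ZD_\zeta^{(\ell)}$ then converts the graded algebra isomorphism into the desired sheaf isomorphism $\CZ_\zeta\cong p_{\CV*}\CO_\CV$. The principal obstacle is the precise verification of the conjugation-by-$g$ relation, which depends on the explicit form of the De Concini--Procesi isomorphism \eqref{eq:Fr-center} and its compatibility with the $U_\zeta$-action on $A_\zeta$; once that identification is pinned down, the remaining surjectivity and minimality statements are essentially formal manipulations in $ZD_\zeta^{(\ell)}$.
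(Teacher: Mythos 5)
The paper does not actually prove this theorem; it is imported verbatim from the prequel \cite{TI}, so there is no internal proof to compare against. Judged on its own terms, your proposal lays out the correct architecture — build the surjection $\Phi\colon A_1\otimes\BC[K]\otimes\BC[\Lambda]\twoheadrightarrow ZD_\zeta^{(\ell)}$ from the three tautological maps, identify $\Ker\Phi$ with the homogeneous ideal of $\CV$, and then apply $\omega_\CB^*$ together with Lemma \ref{lem:Fr2} to turn the graded algebra isomorphism into a sheaf isomorphism. Your commutativity checks are also correct: $\ell_\varphi$ commutes with $\ell_\psi$ because $\BC[G]$ is central in $C_\zeta$, with $\sigma_\lambda$ because $\zeta^{(\lambda,\ell\mu)}=1$, and with $\deru_u$ for $u\in Z_{Fr}(U_\zeta)$ because $U_\zeta$ kills $\BC[G]$ under $\pi$ so that $u_{(0)}\cdot\varphi=\varepsilon(u_{(0)})\varphi$.

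The genuine gap is that the central computation — pinning down $\Ker\Phi$ and matching it with the ideal of $\CV$ in $\CB\times K\times H$ — is not carried out; you yourself flag it as the ``principal obstacle.'' Two specific problems remain. First, the relation you extract from Lemma \ref{lem:HCD}, namely $\deru_z=\sum_\lambda c_\lambda\sigma_{2\lambda}$ for $z\in Z_{Har}(U_\zeta)\cap Z_{Fr}(U_\zeta)$, lives entirely in degree $0$ of $ZD_\zeta^{(\ell)}$ and therefore only constrains the $W$-orbit of the data $(k,t)$ over $\CB$; it says nothing about how that orbit is cut down to a single $N^-$-coset after conjugation by $g$. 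The latter information is carried by relations of positive degree involving $\ell_\varphi$ with $\varphi\in A_1(\mu)$, $\mu\neq 0$, and these must be written down and compared explicitly with the equations defining $\CV$ — your one-paragraph heuristic about ``trivializing the $A_1$-module structure by choosing a lift $g$'' gestures at the right picture but proves nothing. Second, the proposed shortcut of ``comparing graded dimensions degree by degree'' or ``localizing to the big cell and checking operators on $A_\zeta$'' is undermined by the remark in the paper that the map $D_\zeta\to\End_\BC(A_\zeta)$ is \emph{not} injective at a root of unity: relations in $ZD_\zeta^{(\ell)}$ cannot be detected simply by evaluating operators on $A_\zeta$, so any honest computation must work either in the $\BA$-form $D_\BA\hookrightarrow\End_\BA(A_\BA)$ before specialization, or through the presented algebra $D'_\zeta$. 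Until these two steps are supplied, the proposal is a plausible roadmap rather than a proof.
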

Define an $\CO_\CV$-algebra  $\tilde{\DD}_{\CB_\zeta}$ by
\[
\tilde{\DD}_{\CB_\zeta}
=p_{\CV}^{-1}Fr_*\DD_{\CB_\zeta}
\otimes_{p_{\CV}^{-1}p_{\CV*}\CO_\CV}
\CO_\CV.
\]
\begin{theorem}[\cite{TI}]
\label{thm:TI2}
$\tilde{\DD}_{\CB_\zeta}$ is an Azumaya algebra of rank $\ell^{2|\Delta^+|}$.
\end{theorem}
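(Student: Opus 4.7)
The plan is to verify the Azumaya property via the fiberwise criterion: a coherent $\CO_\CV$-algebra that is locally free of constant rank $r^2$ is Azumaya if and only if its geometric fiber at every closed point is a central simple algebra of dimension $r^2$. Accordingly the proof splits into two tasks: establishing local freeness of $\tilde\DD_{\CB_\zeta}$ of rank $\ell^{2|\Delta^+|}$ over $\CO_\CV$, and checking that each closed-point fiber is a matrix algebra $M_{\ell^{|\Delta^+|}}(\BC)$.

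For the local freeness I would use the surjection $E_\zeta=A_\zeta\otimes U_\zeta\otimes\BC[\Lambda]\twoheadrightarrow D_\zeta$, combined with the PBW basis of $U_\zeta$ from Lemma \ref{lem:PBW-DK} and the fact that $U_\zeta$ is finitely generated free over its Frobenius center $Z_{Fr}(U_\zeta)$, with the $\ell$-th power PBW monomials being central. The Harish-Chandra identification of Lemma \ref{lem:HCD} absorbs the action of $\deru_z$ for $z\in Z_{Har}(U_\zeta)$ into the $\BC[\Lambda]$-factor, so that after sheafifying via $\omega_\CB^*$ and pulling back along $p_\CV:\CV\to\CB$ using Theorem \ref{thm:TI1}, one obtains a coherent $\CO_\CV$-module of the predicted rank. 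The number $\ell^{2|\Delta^+|}$ appears as the product of the $\ell^{|\Delta^+|}$ non-central PBW directions in $U_\zeta^+$ and in $U_\zeta^-$.

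For the fiberwise check I would cover $\CB$ by the $W$-translates of the open Bruhat cell $B^-\backslash B^-w_0B^+\cong N^+$. Each translate should admit a quantum localization of $A_\zeta$ on which $D_\zeta$ localizes to an explicit quantum-Weyl-type algebra in $2|\Delta^+|$ generators. For a closed point $v\in\CV$ lying over such a cell, the fiber $\tilde\DD_{\CB_\zeta}|_v$ is the quotient of this local quantum Weyl algebra by a maximal ideal of its center. I would then construct a baby-Verma module of dimension $\ell^{|\Delta^+|}$ for the fiber, show that the action is faithful and irreducible, and conclude from the dimension count $(\ell^{|\Delta^+|})^2=\ell^{2|\Delta^+|}$ that the fiber is the full matrix algebra.

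The principal obstacle is the local identification in the previous step. Since $\CB_\zeta$ is non-commutative, ordinary geometric localization is unavailable, and one must construct an algebraic quantum localization $A_\zeta\to A_\zeta[w]$ on each Bruhat cell that carries $D_\zeta$ to a bona fide quantum Weyl algebra with computable center. One must then verify that the Frobenius part of this local center glues to $Z_{Fr}(U_\zeta)$ and the Harish-Chandra part to $\BC[2\Lambda]^{W\circ}$, matching the coordinate description of $\CV$ as a subvariety of $\CB\times K\times H$ furnished by Theorem \ref{thm:TI1}. Once this matching is in place, the Azumaya conclusion follows from local freeness plus fiberwise simplicity.
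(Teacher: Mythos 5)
This theorem is stated in the present paper only by citation to \cite{TI}; no proof appears here, so there is no ``paper's own proof'' to compare against line by line. Evaluating your proposal on its own terms, the general framework (coherent and locally free of rank $r^2$ over $\CO_\CV$, every geometric fiber a matrix algebra of size $r$, hence Azumaya) is indeed the standard one, and the identification $\CZ_\zeta\cong p_{\CV*}\CO_\CV$ from Theorem \ref{thm:TI1} is the correct entry point.

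The genuine gap is in the fiberwise step. You propose to produce at every closed point $v\in\CV$ an $\ell^{|\Delta^+|}$-dimensional module and then ``show that the action is faithful and irreducible.'' For the De Concini--Kac algebra $U_\zeta$ itself, and equally for $D_\zeta$ before the base change to $\CV$, the baby Verma modules of this dimension are irreducible only for central characters in a dense open subset of $\Spec Z(U_\zeta)$; at singular points they have proper composition factors. The whole point of passing from $\CB\times H$ to the variety $\CV\subset\CB\times K\times H$ and from $Fr_*\DD_{\CB_\zeta}$ to $\tilde\DD_{\CB_\zeta}$ is precisely to resolve this degeneracy: over a singular $(k,t)$ the fiber $\delta^{-1}(k,t)$ (a quantum analogue of a Springer fiber) contains several points, and the claim is that the module assigned to each point of $\CV$ separately is irreducible. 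But this resolution is exactly the nontrivial content of the theorem and is not something one may assert as routine after writing down a candidate module. Your outline silently converts the theorem into its hardest step. Relatedly, the assertion that the local quantum-Weyl model has an irreducible $\ell^{|\Delta^+|}$-dimensional quotient at \emph{every} maximal ideal of its center must be proved, not inferred by analogy with the Weyl case; for general twisted polynomial (Ore) presentations at a root of unity one normally invokes a De Concini--Procesi--type rank condition on the commutation matrix modulo $\ell$, which is a ring-theoretic calculation rather than a module construction. If you want to keep the module-theoretic route, you should either (a) prove irreducibility uniformly over $\CV$, which amounts to proving the theorem, or (b) settle for generic irreducibility and supply a separate argument (e.g.\ the fact that $\CV$ is symplectic, cf.\ Lemma \ref{lem:symplectic}, together with the principle that the Azumaya locus of a finite flat algebra over its center is a union of symplectic leaves) to upgrade from generic to everywhere. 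Either way the present text leaves the decisive step unproved.
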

Define
\[
\delta:\CV\to K\times_{H/W} H
\]
by $\delta(B^-g,k,t)=(k,t)$, where $K\to H/W$ is given by $k\mapsto[h]$ where $h$ is an element of $H$ conjugate to the semisimple part of $\kappa(k)$, and 
$H\to H/W$ is given by $t\mapsto[t^{2\ell}]$.
\begin{theorem}[\cite{TI}]
\label{thm:TI3}
For any $(k, t)\in K\times_{H/W}H$,
the restriction of
$\tilde{\DD}_{\CB_\zeta}$ to ${\delta^{-1}(k,t)}$ is a split Azumaya algebra.
\end{theorem}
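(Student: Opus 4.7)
The plan is, for each $(k,t) \in K \times_{H/W} H$, to produce a coherent $\tilde{\DD}_{\CB_\zeta}$-module $\CM$ on $\delta^{-1}(k,t)$ that is locally free of rank $\ell^{|\Delta^+|}$. Since $\tilde{\DD}_{\CB_\zeta}$ is Azumaya of rank $\ell^{2|\Delta^+|}$ by Theorem \ref{thm:TI2}, any such $\CM$ automatically produces an isomorphism $\tilde{\DD}_{\CB_\zeta}|_{\delta^{-1}(k,t)} \cong \CEnd_{\CO_{\delta^{-1}(k,t)}}(\CM)$---the natural action map between two Azumaya algebras of the same rank $\ell^{2|\Delta^+|}$ must be an isomorphism once it is nonzero on each connected component of the base---and this is exactly a splitting. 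The construction of $\CM$ is modelled on the Beilinson-Bernstein localization of baby Verma modules used by \cite{BMR} in the modular setting.

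The data $(k,t)$ first determines a central character $\chi_{k,t}:Z(U_\zeta) \to \BC$: via $Z_{Fr}(U_\zeta) \cong \BC[K]$ the point $k$ gives a character of $Z_{Fr}(U_\zeta)$, and via $Z_{Har}(U_\zeta) \cong \BC[2\Lambda]^{W\circ}$ together with the shifted action \eqref{eq:shft} the point $t$ gives a character of $Z_{Har}(U_\zeta)$; the fiber product condition over $H/W$ precisely encodes the compatibility of these two characters on $Z_{Har}(U_\zeta) \cap Z_{Fr}(U_\zeta)$, so they glue to a character of $Z(U_\zeta)$. Next, choosing a base point of $\delta^{-1}(k,t)$ amounts to fixing $g$ with $g\kappa(k)g^{-1} \in t^{2\ell} N^-$; this makes the triangular decomposition of $U_\zeta$ compatible with the data, and induction from the corresponding Borel-type subalgebra produces a baby Verma module $Z_{k,t}$ of dimension $\ell^{|\Delta^+|}$ carrying the central character $\chi_{k,t}$. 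Running $Z_{k,t}$ through the localization machinery---form the graded $D_\zeta$-module attached to it, apply $\omega^*$, invoke Lemma \ref{lem:Fr1} to transport to the $Fr_*\DD_{\CB_\zeta}$-side, and finally pull back along $p_\CV$---yields a candidate $\CM$ on $\CV$. By Theorem \ref{thm:TI1}, the action of $\CZ_\zeta$ on this candidate factors through $\chi_{k,t}$, so $\CM$ is supported on $\delta^{-1}(k,t)$.

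The main obstacle is to verify that $\CM$ is locally free of the correct rank $\ell^{|\Delta^+|}$ on $\delta^{-1}(k,t)$. I would attack this fiberwise at closed points $v \in \delta^{-1}(k,t)$: using the PBW bases of Lemma \ref{lem:PBW-DK} together with the explicit description of $\CV$ and of $\tilde{\DD}_{\CB_\zeta}$ as an Azumaya algebra, identify the stalk $\CM_v$ with the baby Verma module attached to the Borel subalgebra selected by the point $v$. A dimension count then gives local freeness of rank $\ell^{|\Delta^+|}$, and the splitting conclusion follows as in the first paragraph. Carrying this out cleanly is likely to require working in a formal or étale neighborhood on $\CV$ and exploiting the flatness of the Frobenius pushforward established in \cite{TI}, and it is here that the geometry of $\delta^{-1}(k,t)$ (which plays the role of a generalized Springer fiber) really enters.
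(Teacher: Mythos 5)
The theorem you are asked to prove is not actually proved in this paper: it carries the citation tag \cite{TI}, so the present paper merely imports it from the earlier {\it Differential operators on quantized flag manifolds at roots of unity} and there is no proof here to compare against. Evaluated on its own terms, your first reduction is sound and standard: for an Azumaya algebra $\CA$ of rank $n^2$ over a Noetherian scheme, any $\CA$-module that is locally free of rank $n$ over the base and faithful on each component supplies a splitting $\CA\cong\CEnd(\CM)$. Your reading of the fiber-product $K\times_{H/W}H$ as encoding the compatibility of the $Z_{Fr}$- and $Z_{Har}$-characters on their intersection, so that $(k,t)$ determines a character of $Z(U_\zeta)$, is also correct given $Z(U_\zeta)\cong Z_{Har}\otimes_{Z_{Har}\cap Z_{Fr}}Z_{Fr}$.

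The gap is in your second and third paragraphs, which is where all the content of the theorem resides. You construct $\CM$ by localizing a single baby Verma module $Z_{k,t}$ attached to {\it one} chosen $g$ with $g\kappa(k)g^{-1}\in t^{2\ell}N^-$, i.e.\ to one point of the fiber $\delta^{-1}(k,t)$, and then assert that after running it through $\omega^*$, $Fr_*$, and $\otimes_{p_\CV^{-1}p_{\CV*}\CO_\CV}\CO_\CV$ the result will be a rank-$\ell^{|\Delta^+|}$ vector bundle on the {\it whole} fiber. There is no argument offered that the resulting sheaf is even scheme-theoretically supported on $\delta^{-1}(k,t)$ as opposed to its formal neighborhood, that it is nonzero on every irreducible component (this ``Springer fiber'' is in general reducible and may be non-reduced), that $R^i\Gamma$ vanishes for $i>0$ so that the derived localization of $Z_{k,t}$ is concentrated in degree $0$, or that the fiber dimension is constantly $\ell^{|\Delta^+|}$. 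The phrase ``a dimension count then gives local freeness'' presupposes coherence plus constancy of the fiber rank and also flatness or reducedness, none of which are established; over a possibly non-reduced fiber a constant residue-field dimension does not by itself imply local freeness. These are exactly the points at which the BMR-style argument is hard, and they require the geometry of $\CV$ (its symplectic structure, the structure of $\delta$, and the vanishing theorems established in \cite{TI}) in an essential way; your proposal flags them as ``likely to require'' extra work rather than carrying them out. As written, the proposal is a reasonable outline of the standard strategy but does not constitute a proof.
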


\subsection{Category of $D$-modules}

We define an abelian category $\Mod(\DD_{\CB_\zeta})$ by
\[
\Mod(\DD_{\CB_\zeta})=
\CC(A_\zeta,D_\zeta).
\]
By Lemma \ref{lem:Fr1} and Lemma \ref{lem:Fr2}
we have 
an equivalence
\begin{align}
\label{eq:equiv-D2}
&Fr_*:
\Mod(\DD_{\CB_\zeta})\to
\Mod(Fr_*\DD_{\CB_\zeta})
\end{align}
of abelian categories, 
where $\Mod(Fr_*\DD_{\CB_\zeta})$ denotes the category of quasi-coherent $Fr_*\DD_{\CB_\zeta}$-modules.
Moreover, for $M\in\Mod(\DD_{\CB_\zeta})$ we have
\begin{equation}
R^i\Gamma_{(A_\zeta,D_\zeta)}(M)= R^i\Gamma(\CB,Fr_*(M))
\in\Mod(D_\zeta(0)),
\end{equation}
where $\Gamma(\CB,\,\,)$ in the right side is the global section functor for the ordinary flag variety $\CB$.

For $t\in H$ we define an abelian category $\Mod(\DD_{\CB_\zeta,t})$ by
\[
\Mod(\DD_{\CB_\zeta,t})=
\Mod_{\Lambda, t}(D_\zeta)/(\Mod_{\Lambda,t}(D_\zeta)\cap\Tor_{\Lambda^+}(A_\zeta,D_\zeta)),
\]
where $\Mod_{\Lambda,t}(D_\zeta)$ is the full subcategory of 
$\Mod_{\Lambda}(D_\zeta)$ consisting of $M\in\Mod_{\Lambda}(D_\zeta)$ so that $\sigma_\lambda|_{M(\mu)}=\theta_\lambda(t) \zeta^{(\lambda,\mu)}\id$ for any $\lambda, \mu\in\Lambda$.
Then we can regard $\Mod(\DD_{\CB_\zeta,t})$ as a full subcategory of $\Mod(\DD_{\CB_\zeta})$
(see \cite[Lemma 4.6]{T2}).
Set
\[
Fr_*\DD_{\CB_\zeta,t}=Fr_*\DD_{\CB_\zeta}\otimes_{\BC[\Lambda]}\BC_t,
\]
where $\BC_t$ denotes the one-dimensional $\BC[\Lambda]$-module given by $e(\lambda)\mapsto\theta_\lambda(t)$ for $\lambda\in\Lambda$.
The equivalence \eqref{eq:equiv-D2} induces the equivalence
\begin{equation}
\label{eq:equiv-D3}
Fr_*:
\Mod(\DD_{\CB_\zeta,t})\to
\Mod(Fr_*\DD_{\CB_\zeta,t}),
\end{equation}
where $\Mod(Fr_*\DD_{\CB_\zeta,t})$ denotes the category of quasi-coherent $Fr_*\DD_{\CB_\zeta,t}$-modules.
In particular, for $M\in\Mod(\DD_{\CB_\zeta,t})$ we have 
\[
R^i\Gamma_{(A_\zeta,D_\zeta)}(M)
=R^i\Gamma(\CB,Fr_*M)
\in\Mod(D_{\zeta,t}(0)), 
\]
where 
$D_{\zeta,t}(0)=
D_{\zeta}(0)/\sum_{\lambda\in\Lambda}
D_{\zeta}(0)(\sigma_\lambda-\theta_\lambda(t))$.

\subsection{Conjecture}
By Lemma \ref{lem:HCD} the natural algebra homomorphism
\[
U_\zeta\otimes_\BC\BC[\Lambda]\to D_\zeta(0)
\]
factors through
\[
U_\zeta\otimes_{Z_{Har}(U_\zeta)}\BC[\Lambda]\to D_\zeta(0),
\]
where $Z_{Har}(U_\zeta)$ is identified with $\BC[2\Lambda]^{W\circ}$ 
by \eqref{eq:HC-center}.
Hence we have a natural algebra homomorphism
\begin{equation}
\label{UtoGammaD}
U_\zeta\otimes_{Z_{Har}(U_\zeta)}\BC[\Lambda]\to
\Gamma(\CB,Fr_*\DD_{\CB_\zeta}).
\end{equation}
For $t\in H$ we denote by $\BC_t$ the one-dimensional $\BC[\Lambda]$-module given by $e(\lambda)v=\theta_\lambda(t)v\,\,(v\in\BC_t)$.
Then \eqref{UtoGammaD} induces an algebra homomorphism
\begin{equation}
\label{UtoGammaD2}
U_\zeta\otimes_{Z_{Har}(U_\zeta)}\BC_t\to
\Gamma(\CB,Fr_*\DD_{\CB_\zeta,t}),
\end{equation}
where $\BC_t$ is regarded as a $Z_{Har}(U_\zeta)$-module by 
$Z_{Har}(U_\zeta)\cong\BC[2\Lambda]^{W\circ}\subset\BC[\Lambda]$. 
Denote by $h_G$ the Coxeter number for $G$.
\begin{conjecture}
\label{conj}
Assume $\ell>h_G$.
The algebra homomorphism \eqref{UtoGammaD} is an isomorphism, and we have
\[
R^i\Gamma(\CB,Fr_*\DD_{\CB_\zeta})=0
\]
for $i\ne0$.
\end{conjecture}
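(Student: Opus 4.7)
The plan is to emulate, in the quantum root-of-unity setting, the classical Beilinson-Bernstein computation identifying global sections of $\DD_\CB$ with a central reduction of $U(\Gg)$. The strategy rests on three ingredients: a PBW-style filtration on $D_\zeta$ whose associated graded is commutative and geometrically tractable; classical Kempf/Andersen-type vanishing on the resulting object under the hypothesis $\ell>h_G$; and a final identification of global sections using Lemma~\ref{lem:HCD} together with the Harish-Chandra center. Concretely I would introduce a filtration $F^\bullet D_\zeta$ by assigning degree $0$ to the generators $\ell_\varphi$, $\sigma_\lambda$, $\deru_{k_\mu}$ and degree one to each $\deru_{e_\beta}$, $\deru_{f_\beta}$ for $\beta\in\Delta^+$. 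Using the commutation relations listed after \eqref{eq:EtoD}, the PBW basis (Lemma~\ref{lem:PBW-DK}), and Lemma~\ref{lem:ad}, one checks that this filtration is compatible both with the algebra structure and with the $\Lambda$-grading, and that $\gr D_\zeta$ is commutative.

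Next, using Lemma~\ref{lem:Fr1} and Lemma~\ref{lem:Fr2} to pass to the classical side, the filtration induces one on $Fr_*\DD_{\CB_\zeta}$ whose associated graded is a sheaf of commutative $\CO_\CB$-algebras. The goal is to identify its relative spectrum with a vector bundle $X\to\CB$ mapping to $\Gg^*\times\Gh^*$ in the manner of the Grothendieck alteration, refining the inclusion $\CZ_\zeta\subset Fr_*\DD_{\CB_\zeta}$ coming from Theorem~\ref{thm:TI1}. Once this identification is made, classical cohomology vanishing (Kempf vanishing for dominant line bundles, together with affineness of the natural projection from the Grothendieck resolution) gives $H^q(\CB,\gr Fr_*\DD_{\CB_\zeta})=0$ for $q>0$ and identifies $H^0$ with a graded algebra isomorphic to $\gr(U_\zeta\otimes_{Z_{Har}(U_\zeta)}\BC[\Lambda])$. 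The hypothesis $\ell>h_G$ enters here exactly as in \cite{BMR}: it is the bound required for Andersen-type vanishing for the line bundles that appear.

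I would then run the spectral sequence
\begin{equation*}
E_1^{p,q}=H^q\bigl(\CB,\gr^p Fr_*\DD_{\CB_\zeta}\bigr)\Rightarrow H^{p+q}(\CB,Fr_*\DD_{\CB_\zeta}).
\end{equation*}
Vanishing of $E_1^{p,q}$ for $q>0$ forces degeneration at $E_1$, yielding $R^{>0}\Gamma(\CB,Fr_*\DD_{\CB_\zeta})=0$ and a filtered isomorphism on $H^0$. Combined with injectivity of \eqref{UtoGammaD}, which follows from faithfulness of the action of $D_\zeta$ on sufficiently many $A_\zeta(\lambda)$, this promotes the graded-level identification to the desired algebra isomorphism of \eqref{eq:RGammaD}.

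The main obstacle is the geometric identification of $\gr Fr_*\DD_{\CB_\zeta}$ with a classical object. Unlike the positive-characteristic Lie algebra situation of \cite{BMR}, the quantized flag manifold $\CB_\zeta$ has no intrinsic cotangent bundle, and the associated graded of $D_\zeta$ is a commutative $\BC$-algebra whose spectrum is not a priori a scheme over $\CB$ in a manner compatible with the Frobenius. Matching it to the Grothendieck alteration in a way that commutes with the identifications of Theorem~\ref{thm:TI1} requires a careful analysis of how the twisted quantum relations behave after passage to $\gr$ and specialization at $q=\zeta$; it is precisely this step, rather than any of the classical vanishing arguments, that forces \eqref{eq:RGammaD} (and hence Conjecture~\ref{conj}) to be left open in the present paper.
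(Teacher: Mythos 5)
The statement you were asked to prove is explicitly a \emph{conjecture} in the paper: the introduction says outright that ``we do not know how to prove \eqref{eq:RGammaD} at present, and hence we can only state it as a conjecture.'' The paper offers no proof of Conjecture \ref{conj}; what it does instead is reduce it to a stronger statement (Conjecture \ref{conj2}, about $D'_{\zeta,f}$) and then reformulate that in terms of the induction functor (Conjecture \ref{conj3}), noting that the reformulation is essentially an assertion of Backelin-Kremnizer \cite{BK2} known to hold for all but finitely many $\ell$'s. Your submission also does not constitute a proof, and you say so yourself in the last paragraph: you acknowledge that identifying $\gr Fr_*\DD_{\CB_\zeta}$ with a classical geometric object in a manner compatible with the Azumaya structure of Theorem~\ref{thm:TI1} is precisely the step you cannot carry out. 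So there is no disagreement about the bottom line; I only want to flag that your outline has a genuine technical flaw even before reaching the acknowledged obstacle, and that the paper's own route is quite different from the one you sketch.

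The flaw is in the claim that $\gr D_\zeta$ is commutative under the filtration you propose. The commutation rule $\deru_u\ell_\varphi=\sum_{(u)}\ell_{u_{(0)}\cdot\varphi}\deru_{u_{(1)}}$ together with $\Delta(e_i)=e_i\otimes1+k_i\otimes e_i$ gives, for $\varphi\in A_\zeta(\lambda)_\xi$,
\[
\deru_{e_i}\ell_\varphi=\ell_{e_i\cdot\varphi}+\zeta^{(\xi,\alpha_i)}\ell_\varphi\deru_{e_i},
\]
so in the associated graded one is left with $\deru_{e_i}\ell_\varphi=\zeta^{(\xi,\alpha_i)}\ell_\varphi\deru_{e_i}$. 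This is $\zeta$-commutativity, not commutativity; unlike the characteristic-$p$ situation of \cite{BMR}, the twisting constants do not go away in $\gr$. Relatedly, assigning degree one to every $\deru_{e_\beta}$, $\beta\in\Delta^+$, is not compatible with the Levendorskii-Soibelman commutation relations among the PBW root vectors (a product $e_{\beta_k}e_{\beta_l}$ rewrites with monomials in the intervening root vectors whose total exponent can exceed two); the natural filtration compatible with those relations puts $e_\beta$ in degree $\Ht(\beta)$. Either of these points means that the spectral sequence you set up would compare $H^\bullet(\CB,Fr_*\DD_{\CB_\zeta})$ not with the cohomology of a commutative sheaf of $\CO_\CB$-algebras (i.e.\ a scheme affine over $\CB$, where Kempf/Andersen vanishing applies) but with something noncommutative, and the classical vanishing input you invoke is not directly available. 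In contrast, the paper's actual line of attack never passes to an associated graded at all: it replaces $D_\zeta$ by the adjoint-finite part $D'_{\zeta,f}$, rewrites $R\Gamma(\omega^*D'_{\zeta,f})$ as the derived induction $R\Ind(V)$ of a specific $C_\zeta^{\leqq0}$-comodule $V$ via Propositions \ref{prop:equivalence-O}--\ref{prop:RiInd}, and leaves the resulting cohomological statement (Conjecture \ref{conj3}) open, pointing to \cite{BK2} for partial results.
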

\begin{proposition}
Let $\ell>h_G$, and assume that Conjecture \ref{conj} is valid.
Then for $t\in H$ we have
\[
\Gamma(\CB,Fr_*\DD_{\CB_\zeta,t})\cong U_\zeta\otimes_{Z_{Har}(U_\zeta)}\BC_t,
\]
and 
\[
R^i\Gamma(\CB,Fr_*\DD_{\CB_\zeta,t})=0\qquad(i\ne0).
\]
\end{proposition}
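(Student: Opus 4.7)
The plan is to apply the derived functor $-\otimes^L_{\BC[\Lambda]}\BC_t$ to Conjecture \ref{conj}. First I would verify that $\BC[\Lambda]$ acts centrally on $Fr_*\DD_{\CB_\zeta}$ through $e(\lambda)\mapsto\sigma_\lambda$: the relation $\sigma_\lambda\ell_\varphi=q^{(\lambda,\mu)}\ell_\varphi\sigma_\lambda$ gives the scalar $\zeta^{\ell(\lambda,\mu')}=1$ after restriction to $A_1\subset A_\zeta^{(\ell)}$ (where $A_1$-sections sit in degrees $\mu\in\ell\Lambda$), while $\sigma_\lambda$ commutes with $\deru_u$ by definition. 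Thus $Fr_*\DD_{\CB_\zeta,t}=Fr_*\DD_{\CB_\zeta}\otimes_{\BC[\Lambda]}\BC_t$ is a well-formed central base change of $\CO_\CB$-algebras.

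Next, pick a $\BZ$-basis $\lambda_1,\ldots,\lambda_r$ of $\Lambda$ and let $K^\bullet$ be the Koszul complex on the regular sequence $x_i=e(\lambda_i)-\theta_{\lambda_i}(t)\in\BC[\Lambda]$, a bounded free resolution of $\BC_t$ over $\BC[\Lambda]$. Since $K^\bullet$ consists of finite free $\BC[\Lambda]$-modules acting centrally, tensoring with $K^\bullet$ commutes with $R\Gamma(\CB,-)$ computed by a \v{C}ech model, yielding
\[
R\Gamma(\CB,\;Fr_*\DD_{\CB_\zeta}\otimes^L_{\BC[\Lambda]}\BC_t)
\;\cong\;
R\Gamma(\CB,Fr_*\DD_{\CB_\zeta})\otimes^L_{\BC[\Lambda]}\BC_t.
\]
Feeding Conjecture \ref{conj} into the right-hand side and using associativity of the derived tensor product (together with flatness of $\BC[\Lambda]$ over $Z_{Har}(U_\zeta)\cong\BC[2\Lambda]^{W\circ}$, a Pittie--Steinberg-type fact), this collapses to $U_\zeta\otimes^L_{Z_{Har}(U_\zeta)}\BC_t$.

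To extract the two assertions of the proposition from this derived equality I need two flatness collapses. On the sheaf side, $Fr_*\DD_{\CB_\zeta}$ must be flat over $\BC[\Lambda]$, so that $Fr_*\DD_{\CB_\zeta}\otimes^L_{\BC[\Lambda]}\BC_t$ reduces to $Fr_*\DD_{\CB_\zeta,t}$ placed in degree $0$; this I would derive from the Azumaya property of $\tilde{\DD}_{\CB_\zeta}$ (Theorem \ref{thm:TI2}), the identification $\CZ_\zeta\cong p_{\CV*}\CO_\CV$ of Theorem \ref{thm:TI1}, and flatness of the third projection $\CV\to H$. On the algebraic side, $U_\zeta\otimes^L_{Z_{Har}(U_\zeta)}\BC_t$ must be concentrated in degree $0$; this follows from the freeness of $U_\zeta$ over $Z(U_\zeta)$ (valid under $\ell>h_G$) combined with the decomposition $Z(U_\zeta)\cong Z_{Har}(U_\zeta)\otimes_{Z_{Har}\cap Z_{Fr}}Z_{Fr}(U_\zeta)$ and the freeness of $Z_{Fr}(U_\zeta)\cong\BC[K]$ over $\BC[H]^W=Z_{Har}\cap Z_{Fr}$. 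The main obstacle is thus not the formal manipulation but clean verification of these two flatness statements; the rest of the argument is a short piece of homological algebra given the conjecture.
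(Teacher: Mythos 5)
Your proposal follows essentially the same route as the paper: you apply $-\otimes^L_{\BC[\Lambda]}\BC_t$ to Conjecture \ref{conj} and reduce to the same two flatness inputs --- flatness of $Fr_*\DD_{\CB_\zeta}$ over $\BC[\Lambda]$ (from the Azumaya property of $\tilde{\DD}_{\CB_\zeta}$, Theorem \ref{thm:TI2}, together with Theorem \ref{thm:TI1} and flatness of $f:\CV\to H$), and flatness of $\BC[\Lambda]$ over $Z_{Har}(U_\zeta)$ (Pittie--Steinberg, cited from \cite{St}). The paper packages this as a projection-formula computation along $p_\CV:\CV\to\CB$ and $f:\CV\to H$; your Koszul-plus-\v{C}ech implementation is interchangeable with that.

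The one place you deviate, and deviate unnecessarily, is the final step. To see that $U_\zeta\otimes^L_{Z_{Har}(U_\zeta)}\BC_t$ sits in degree $0$ you invoke freeness of $U_\zeta$ over $Z(U_\zeta)$ and the identification $Z_{Har}(U_\zeta)\cap Z_{Fr}(U_\zeta)=\BC[H]^W$; neither is stated in the paper in that form (what De Concini--Kac actually give is freeness of $U_\zeta$ over $Z_{Fr}(U_\zeta)$, and $Z_{Har}\cap Z_{Fr}$ involves the $\ell$-th power map and the shifted $W$-action, not literally $\BC[H]^W$), so this chain would need genuine verification. Fortunately none of it is needed: once the sheaf-side flatness is in place, $Fr_*\DD_{\CB_\zeta}\otimes^L_{\BC[\Lambda]}\BC_t=Fr_*\DD_{\CB_\zeta,t}$ is an honest sheaf, and $R\Gamma(\CB,-)$ of a sheaf has cohomology only in degrees $\geqq 0$, whereas the derived tensor product $U_\zeta\otimes^L_{Z_{Har}(U_\zeta)}\BC_t$, being computed by $\Tor$, has cohomology only in degrees $\leqq 0$. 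The isomorphism you have established therefore forces both to be concentrated in degree $0$, which yields both assertions of the proposition at once and gives $\Tor^{Z_{Har}(U_\zeta)}_i(U_\zeta,\BC_t)=0$ for $i>0$ as a byproduct rather than a hypothesis. This is, implicitly, how the paper's proof closes after arriving at the same displayed identity $R\Gamma(\CB,Fr_*\DD_{\CB_\zeta,t})=U_\zeta\otimes^L_{Z_{Har}(U_\zeta)}\BC_t$.
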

\begin{proof}
Define $f:\CV\to H$ to be the composite of the embedding $\CV\to\CB\times K\times H$ and the projection $\CB\times K\times H\to H$ onto the third factor.
Since $p_\CV$ is an affine morphism, we have
$Rp_{\CV*}\tilde{\DD}_{\CB_\zeta}=p_{\CV*}\tilde{\DD}_{\CB_\zeta}=
Fr_*\DD_{\CB_\zeta}$.
Hence we have
\[
U_\zeta\otimes^L_{Z_{Har}(U_\zeta)}\BC[\Lambda]
=
U_\zeta\otimes_{Z_{Har}(U_\zeta)}\BC[\Lambda]
\cong R\Gamma(\CB,Fr_*\DD_{\CB_\zeta})=R\Gamma(\CV,\tilde{\DD}_{\CB_\zeta}).
\]
Here we used the fact that $\BC[\Lambda]$ is a free ${Z_{Har}(U_\zeta)}$-module (see Steinberg \cite{St}). 
Denote by $\CO_t$ the $\CO_H$-module corresponding to the $\BC[\Lambda]$-module $\BC_t$. 
Similarly we have
\begin{align*}
&Fr_*\DD_{\CB_\zeta,t}=
p_{\CV*}(\tilde{\DD}_{\CB_\zeta}\otimes_{\BC[\Lambda]}\BC_t)
=Rp_{\CV*}(\tilde{\DD}_{\CB_\zeta}\otimes_{\BC[\Lambda]}\BC_t).\end{align*}
Since $f$ is flat, we have $Lf^*\CO_t=f^*\CO_t$. Hence by Theorem \ref{thm:TI2}
we have 
\[
\tilde{\DD}_{\CB_\zeta}\otimes^L_{\CO_\CV}Lf^*\CO_t
=\tilde{\DD}_{\CB_\zeta}\otimes^L_{\CO_\CV}f^*\CO_t
=\tilde{\DD}_{\CB_\zeta}\otimes_{\CO_\CV}f^*\CO_t.
\]
It follows that
\begin{align*}
&Fr_*\DD_{\CB_\zeta,t}
=Rp_{\CV*}(\tilde{\DD}_{\CB_\zeta}\otimes^L_{\CO_\CV}Lf^*\CO_t)
=Rp_{\CV*}(\tilde{\DD}_{\CB_\zeta})\otimes^L_{\CO_H}\CO_t.
\end{align*}
Hence we have
\begin{align*}
&R\Gamma(\CB,Fr_*\DD_{\CB_\zeta,t})
=R\Gamma(H,Rf_*(\tilde{\DD}_{\CB_\zeta}\otimes^L_{\CO_\CV}Lf^*\CO_t))\\
=&R\Gamma(H,Rf_*\tilde{\DD}_{\CB_\zeta}\otimes^L_{\CO_H}\CO_t)
=R\Gamma(H,Rf_*\tilde{\DD}_{\CB_\zeta})\otimes^L_{\BC[\Lambda]}\BC_t\\
=&R\Gamma(\CV,\tilde{\DD}_{\CB_\zeta})\otimes^L_{\BC[\Lambda]}\BC_t
=U_\zeta\otimes^L_{Z_{Har}(U_\zeta)}\BC[\Lambda]
\otimes^L_{\BC[\Lambda]}\BC_t\\
=&U_\zeta\otimes^L_{Z_{Har}(U_\zeta)}\BC_t.
\end{align*}
\end{proof}

\subsection{Derived Beilinson-Bernstein equivalence}
We show that Conjecture \ref{conj} implies a variant of the Beilinson-Bernstein equivalence for derived categories.

Recall that we have an identification
\[
Z_{Har}(U_\zeta)\cong\BC[2\Lambda]^{W\circ}
\subset
\BC[2\Lambda]
\subset
\BC[\Lambda].
\]
Recall also that we identify $\BC[\Lambda]$ with the coordinate algebra $\BC[H]$ of $H$.
Set $H^{(2)}=H/\Ker(H\ni t\mapsto t^2\in H)$, and let $\pi:H\to H^{(2)}$ be the canonical homomorphism.
Then we have a natural identification $\BC[H^{(2)}]=\BC[2\Lambda]$ so that $\pi^*:\BC[H^{(2)}]\to\BC[H]$ is identified with the inclusion $\BC[2\Lambda]\subset\BC[\Lambda]$.
Denote the isomorphism $H\cong H^{(2)}$ corresponding to 
$\BC[\Lambda]\ni e(\lambda)\leftrightarrow e(2\lambda)\in \BC[2\Lambda]$ by $t\leftrightarrow t^{1/2}$.
Then we have $\pi(t)=(t^2)^{1/2}$.
The shifted action \eqref{eq:shft} of $W$ on $\BC[2\Lambda]$ induces an action of $W$ on $H^{(2)}$ given by
\[
w\circ t^{1/2}=(w(tt_{2\rho})t_{2\rho}^{-1})^{1/2}
\qquad(w\in W, t\in H),
\]
where $t_{2\rho}\in H$ is given by $\theta_\mu(t_{2\rho})=\zeta^{2(\mu,\rho)}$ for any $\mu\in\Lambda$ (note that $2(\mu,\rho)\in\BZ$), and $Z_{Har}(U_\zeta)$ is regarded as the coordinate algebra of the quotient variety $(W\circ)\backslash H^{(2)}$.
For $t\in H$ we denote by $\chi_t:\BC[\Lambda]\to\BC$ the corresponding algebra homomorphism.
By the above argument we have
\[
\chi_{t_1}|_{Z_{Har}(U_\zeta)}=
\chi_{t_2}|_{Z_{Har}(U_\zeta)}
\quad
\Longleftrightarrow
\quad
(t_1^2)^{1/2}\in W\circ t_2^{1/2}.
\]
We say that $t\in H$ is regular if 
\[
\{
w\in W\mid 
w\circ(t^2)^{1/2}=(t^2)^{1/2}\}
=\{1\}.
\]

We denote by $\Mod_{coh}(Fr_*{\DD}_{\CB_\zeta,t})$ 
(resp. 
$\Mod_{f}(U_\zeta\otimes_{Z_{Har}(U_\zeta)}\BC_t)$) the category of 
coherent $Fr_*{\DD}_{\CB_\zeta,t}$-modules (resp. 
finitely generated $U_\zeta\otimes_{Z_{Har}(U_\zeta)}\BC_t$-modules). 
We also denote by 
$\Mod_{coh,t}(Fr_*{\DD}_{\CB_\zeta})$ 
(resp.\
$\Mod_{f,t}(U_\zeta)$) the category of coherent
$Fr_*{\DD}_{\CB_\zeta}$-modules
(resp. finitely-generated $U_\zeta$-modules) 
killed by some power of the maximal ideal of  $\BC[\Lambda]$ (resp. $Z_{Har}(U_\zeta)$) corresponding to $t\in H$.

\begin{theorem}
\label{BB}
Let $\ell>h_G$, and assume that Conjecture \ref{conj} is valid.
If $t\in H$ is regular, 
then the natural functors 
\begin{gather*}
R\Gamma_{\hat{t}}:
D^b(\Mod_{coh,t}(Fr_*{\DD}_{\CB_\zeta,t}))
\to
D^b(\Mod_{f,t}(U_\zeta)),\\
R\Gamma_t:
D^b(\Mod_{coh}(Fr_*{\DD}_{\CB_\zeta,t}))
\to
D^b(\Mod_{f}(U_\zeta\otimes_{Z_{Har}(U_\zeta)}\BC_t))
\end{gather*}
give equivalences of derived categories.
\end{theorem}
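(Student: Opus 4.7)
The plan is to imitate the argument of Bezrukavnikov-Mirkovi\'c-Rumynin \cite{BMR}. Using \eqref{UtoGammaD2}, I would introduce the localization functor $L\Delta_t$ defined by $L\Delta_t(M)=Fr_*\DD_{\CB_\zeta,t}\otimes^L_{U_\zeta\otimes_{Z_{Har}(U_\zeta)}\BC_t}M$, which is left adjoint to $R\Gamma_t$; since $Fr_*\DD_{\CB_\zeta,t}$ is coherent on the projective scheme $\CB$, the functor $R\Gamma_t$ has bounded cohomological dimension, and both functors descend to the stated bounded derived categories. The analogous adjoint $L\Delta_{\hat t}$ for $R\Gamma_{\hat t}$ is obtained by completing $Z_{Har}(U_\zeta)$ at the maximal ideal corresponding to $t$.

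The direction $R\Gamma_t\circ L\Delta_t\cong\id$ is the easier one. On the free module $M=U_\zeta\otimes_{Z_{Har}(U_\zeta)}\BC_t$ it reduces to the preceding proposition. A derived projection formula, justified by the finite cohomological dimension of $R\Gamma_t$ together with the coherence of $Fr_*\DD_{\CB_\zeta,t}$, extends the isomorphism to bounded complexes of finitely generated free modules, and a standard d\'evissage then handles the rest of $D^b(\Mod_f(U_\zeta\otimes_{Z_{Har}(U_\zeta)}\BC_t))$.

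The hard direction is showing the unit $\id\to L\Delta_t\circ R\Gamma_t$ is an isomorphism, and this is where the regularity of $t$ must be used. I would transfer the problem to the Azumaya picture of Theorem \ref{thm:TI2}: pushforward along $p_\CV$ identifies $\Mod(Fr_*\DD_{\CB_\zeta,t})$ with $\Mod(\tilde{\DD}_{\CB_\zeta}|_{\CV_t})$ for $\CV_t:=f^{-1}(t)\subset\CV$. The main technical obstacle is to exploit the fiberwise splittings provided by Theorem \ref{thm:TI3} together with the regularity of $t$: the hypothesis that $(t^2)^{1/2}$ has trivial shifted-$W$-stabilizer makes the projection $K\times_{H/W}\{t\}\to\{t\}$ \'etale at the relevant points, so the pointwise splittings integrate, possibly after an \'etale base change, to a genuine Morita equivalence between $\Mod(\tilde{\DD}_{\CB_\zeta}|_{\CV_t})$ and quasi-coherent sheaves on $\CV_t$. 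The residual geometric problem is to show that derived global sections on $\CV_t$ give an equivalence with modules over its ring of global functions, which I expect to require a Grauert-Riemenschneider-type vanishing for the proper map $\CV_t\to\Spec\Gamma(\CV_t,\CO_{\CV_t})$; this is the root-of-unity analogue of the corresponding step in \cite{BMR}. The ``$\hat t$'' version is then deduced by a completion argument, exploiting that $\BC[\Lambda]$ is free over $Z_{Har}(U_\zeta)$ by Steinberg, so that completion is compatible with the tensor products appearing in the definition of $L\Delta_t$ and in the proof of the preceding proposition.
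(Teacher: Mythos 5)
Your overall architecture agrees with the paper: introduce the left adjoint to $R\Gamma_t$, show the unit $\Id\to R\Gamma_t\circ\CL_t$ is an isomorphism by reducing to the free module and using finite homological dimension, and then deal with the other direction. For the unit direction, your sketch is essentially the same as the paper's Proposition \ref{prop:BB}(ii), and invoking Brown--Goodearl finiteness of homological dimension (Proposition \ref{prop:hom-dim}) is the right key input, though you conflate it with finite cohomological amplitude of $R\Gamma_t$; both are actually used in the d\'evissage.

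Where you diverge, and where there is a genuine gap, is in the ``hard'' direction (you call it the unit $\Id\to L\Delta_t\circ R\Gamma_t$; it is actually the counit $\CL_t\circ R\Gamma_t\to\Id$). The paper, following \cite[\S3.5]{BMR}, passes from the one-sided isomorphism of Proposition \ref{prop:BB} to an equivalence by a Calabi--Yau/Serre-duality argument, and the crucial geometric input is Lemma \ref{lem:symplectic}: the variety $\CV$ carries a symplectic form, hence trivial canonical bundle, which makes the relative derived category Calabi--Yau and allows a one-sided inverse to be promoted to a two-sided one. Your proposal replaces this by a claimed Morita equivalence coming from the pointwise splittings of Theorem \ref{thm:TI3} together with a Grauert--Riemenschneider-type vanishing. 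Neither half of that works as stated. First, the splittings in Theorem \ref{thm:TI3} are along single closed fibres $\delta^{-1}(k,t)$; splitting an Azumaya algebra over a family (even over a formal or \'etale neighbourhood) requires killing an obstruction in $H^2$ and is a nontrivial construction in \cite{BMR} that does not follow from fibrewise splitting plus regularity of $t$; in particular, regularity of $t$ does not by itself make any relevant projection \'etale or integrate the splittings. Second, even granting a Morita equivalence with $\Mod(\CO_{\CV_t})$, it is false in general that $R\Gamma$ from coherent sheaves on a resolution to modules over its global functions is an equivalence -- Grauert--Riemenschneider vanishing is about higher direct images of the canonical sheaf and has nothing to say about fully-faithfulness on the whole derived category (the minimal resolution of an ADE singularity is the standard counterexample: $R\Gamma$ there is very far from an equivalence). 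The step that actually makes this work is precisely the Calabi--Yau argument you omit.

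You also locate regularity in the wrong place: in the paper it enters in Proposition \ref{prop:BB}(i), i.e., in showing the unit in the \emph{completed} version $R\Gamma_{\hat t}\circ\CL_{\hat t}\cong\Id$, whereas the non-completed unit holds for all $t$ by \ref{prop:BB}(ii). Your plan to deduce the $\hat t$-version by a completion argument after the fact therefore has the dependency backwards relative to the paper.
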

The proof of this result is completely similar to that of the corresponding fact for Lie algebras in positive characteristics given in \cite{BMR}.
We only give below an outline of it.
First note the following.
\begin{proposition}
[Brown-Goodearl \cite{BGood}]
\label{prop:hom-dim}
$U_\zeta$ has finite homological dimension.
\end{proposition}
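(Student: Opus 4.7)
The plan is to exploit the PBW basis of Lemma \ref{lem:PBW-DK} in order to exhibit $U_\zeta$ as an iterated Ore (skew-polynomial) extension of the Laurent polynomial ring $U_\zeta^0\cong\BC[\Lambda]$, and then to invoke the standard theorem that each Ore extension raises the global dimension by at most one. This gives a quantitative bound $\gl\,U_\zeta\le\dim\Gg<\infty$.

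First I would fix a reduced expression $w_0=s_{i_1}\cdots s_{i_N}$ and totally order the PBW generators as
\[
f_{\beta_N},\,f_{\beta_{N-1}},\,\ldots,\,f_{\beta_1},\,e_{\beta_1},\,e_{\beta_2},\,\ldots,\,e_{\beta_N}.
\]
The Levendorskii--Soibelman commutation formulas (valid over $\BA$ and hence after specialization at $\zeta$ by Lemma \ref{lem:PBW-DK}) assert that for $i<j$ the quantum commutator $e_{\beta_j}e_{\beta_i}-\zeta^{(\beta_i,\beta_j)}e_{\beta_i}e_{\beta_j}$ is a $\BC$-linear combination of ordered PBW monomials $e_{\beta_{k_1}}^{m_1}\cdots e_{\beta_{k_s}}^{m_s}$ with $i<k_1\le\cdots\le k_s<j$, and symmetrically for the $f_\beta$'s; the $k_\lambda$'s genuinely skew-commute with every root vector. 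This precisely says that adjoining the generators one at a time in the prescribed order realizes $U_\zeta$ as an iterated Ore extension
\[
U_\zeta=\BC[\Lambda]\,[f_{\beta_N};\sigma_N,\delta_N]\cdots[f_{\beta_1};\sigma_1,\delta_1]\,[e_{\beta_1};\sigma_1',\delta_1']\cdots[e_{\beta_N};\sigma_N',\delta_N'],
\]
with each $\sigma$ an automorphism and each $\delta$ a $\sigma$-derivation of the previously constructed subalgebra.

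Finally, I would apply the McConnell--Robson theorem that, for any Ore extension $S=R[x;\sigma,\delta]$ with $\sigma$ an automorphism, one has $\gl\,S\le\gl\,R+1$. Starting from the Laurent polynomial ring $\BC[\Lambda]$, whose global dimension equals $|I|$, and iterating $2|\Delta^+|$ times yields
\[
\gl\,U_\zeta\le |I|+2|\Delta^+|=\dim\Gg,
\]
which establishes the proposition.

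The hard part is checking that the commutation relations at the root of unity really do produce a genuine Ore extension at each step, i.e.\ that the correction terms $\delta$ land in the already-constructed subalgebra and are honest $\sigma$-derivations with $\sigma$ invertible. The potential subtlety is that at $q=\zeta$ the Levendorskii--Soibelman expressions involve quantum integers $[n]_{q_i}$ that could vanish. This is ruled out because the integral version of the formulas over $\BA$ has no such denominators in the relevant range (the offending $[n]_{q_i}!$ appear only inside divided powers, which are themselves in $U_\BA^L$ but not needed here); this is exactly the integrality statement underlying Lemma \ref{lem:PBW-DK}. A safer variant, bypassing the combinatorial verification, is to carry out the argument first for the generic form $U_\BF$ and the $\BA$-form $U_\BA$ (where the ordered monomials are an $\BA$-basis), then specialize at $\zeta$: the associated graded with respect to the PBW filtration is a quantum affine space over $\BC[\Lambda]$, and the usual filtered-to-graded comparison transfers finite global dimension to $U_\zeta$.
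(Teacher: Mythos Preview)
The paper does not prove this proposition; it simply records it as a result of Brown--Goodearl \cite{BGood} and uses it as a black box. So there is nothing to compare against on the paper's side.

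Your iterated Ore extension strategy is the standard one in the literature (it goes back to De Concini--Kac--Procesi), and it does lead to a proof, but there is a genuine gap in the argument as written: you only invoke the Levendorskii--Soibelman relations among the $e$'s and among the $f$'s, and never verify the \emph{mixed} commutators $[e_{\beta_k},f_{\beta_j}]$. With your chosen order these do \emph{not} land in the subalgebra already constructed. Concretely, in type $A_2$ with $w_0=s_1s_2s_1$ one has $\beta_1=\alpha_1$, $\beta_2=\alpha_1+\alpha_2$, $\beta_3=\alpha_2$, and a direct computation gives
\[
e_{\beta_2}f_{\beta_1}-f_{\beta_1}e_{\beta_2}=-q^{-1}k_1^{-1}e_{\beta_3}.
\]
In your order $e_{\beta_3}$ has not yet been adjoined when you adjoin $e_{\beta_2}$, so this is not an Ore extension at that step. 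The Ore presentation can be salvaged (for instance by reversing the order in which the $e_\beta$'s are adjoined, or by the more delicate orderings in De Concini--Procesi), but this requires a systematic analysis of all the cross-relations $[e_{\beta_k},f_{\beta_j}]$, which is exactly the ``hard part'' you flag but do not carry out.

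Your ``safer variant'' at the end is in fact the cleanest route: filter $U_\zeta$ by total PBW degree, observe that $\gr U_\zeta$ is a quantum affine space over $\BC[\Lambda]$ (hence of finite global dimension), and transfer this to $U_\zeta$ via the standard filtered/graded comparison. This avoids the cross-relation bookkeeping entirely and gives the bound $\gl U_\zeta\le\dim\Gg$ you state.
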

The functors 
\begin{gather*}
R\Gamma_{\hat{t}}:D^b(\Mod_{coh,t}(Fr_*{\DD}_{\CB_\zeta}))
\to
D^b(\Mod_{f,t}(U_\zeta)),\\
R\Gamma_t:D^-(\Mod_{coh}(Fr_*{\DD}_{\CB_\zeta,t}))
\to
D^-(\Mod_{f}(U_\zeta\otimes_{Z_{Har}(U_\zeta)}\BC_t))
\end{gather*}
have left adjoints
\begin{gather*}
\CL_{\hat{t}}:
D^b(\Mod_{f,t}(U_\zeta))
\to
D^b(\Mod_{coh,t}(Fr_*{\DD}_{\CB_\zeta})),\\
\CL_{t}:
D^-(\Mod_{f}(U_\zeta\otimes_{Z_{Har}(U_\zeta)}\BC_t))
\to
D^-(\Mod_{coh}(Fr_*{\DD}_{\CB_\zeta,t})).
\end{gather*}
Arguing exactly as in \cite[3.3, 3.4]{BMR} using 
Theorem \ref{thm:TI2} and 
Proposition \ref{prop:hom-dim} we obtain the following.
\begin{proposition}
\label{prop:BB}
\begin{itemize}
\item[(i)]
If $t$ is regular, the adjunction morphism
$
\Id\to R\Gamma_{\hat{t}}\circ\CL_{\hat{t}}
$
is an isomorphism on $D^b(\Mod_{f,t}(U_\zeta))$.
\item[(ii)]
For any $t$, the adjunction morphism
$
\Id\to R\Gamma_{{t}}\circ\CL_{{t}}
$
is an isomorphism on $D^-(\Mod_{f}(U_\zeta\otimes_{Z_{Har}(U_\zeta)}\BC_t))$.

\end{itemize}
\end{proposition}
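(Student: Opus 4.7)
My plan is to follow the strategy of Bezrukavnikov--Mirkovi\'c--Rumynin \cite[\S3.3--3.4]{BMR} verbatim, replacing their geometric inputs by the quantum analogues now at our disposal: the identification $R\Gamma(\CB, Fr_*\DD_{\CB_\zeta,t}) \cong U_\zeta \otimes_{Z_{Har}(U_\zeta)} \BC_t$ concentrated in degree zero from the preceding proposition, the Azumaya property of $\tilde\DD_{\CB_\zeta}$ on $\CV$ (Theorem \ref{thm:TI2}), its fibrewise splitting along $\delta$ (Theorem \ref{thm:TI3}), and the finite homological dimension of $U_\zeta$ (Proposition \ref{prop:hom-dim}). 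I would realize the left adjoints as
\[
\CL_t(M) = Fr_*\DD_{\CB_\zeta,t} \otimes^L_{U_\zeta \otimes_{Z_{Har}(U_\zeta)} \BC_t} M,
\]
and analogously for $\CL_{\hat t}$, so that by a projection-formula argument the unit $\Id \to R\Gamma \CL$ takes the form $M \to R\Gamma(\CB, Fr_*\DD_{\CB_\zeta,t}) \otimes^L M$.

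The base case is the rank-one free module $M_0 = U_\zeta \otimes_{Z_{Har}(U_\zeta)} \BC_t$ placed in degree zero: one has $\CL_t(M_0) = Fr_*\DD_{\CB_\zeta,t}$, and the preceding proposition gives $R\Gamma_t \CL_t(M_0) \cong M_0$ in degree zero, so the unit is an isomorphism on $M_0$. Since both functors are triangulated and commute with finite direct sums and shifts, the unit is also an isomorphism on every bounded complex of finitely generated free modules. Every object of $D^-(\Mod_f(U_\zeta \otimes_{Z_{Har}(U_\zeta)} \BC_t))$ admits a bounded-above resolution by such frees, and Proposition \ref{prop:hom-dim} together with a standard way-out argument then establishes (ii).

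For (i) the calculation on $M_0$ is identical, and the additional content is that $\CL_{\hat t}$ has finite cohomological amplitude, so that it sends $D^b$ to $D^b$. This is where regularity of $t$ enters, through Theorem \ref{thm:TI3}: via the affine morphism $p_\CV$ the functor $\CL_{\hat t}$ lifts to $\tilde\DD_{\CB_\zeta}$-modules on $\CV$ supported in a formal neighborhood of $\delta^{-1}(K \times_{H/W} \{t\})$, and for regular $t$ the Azumaya algebra splits along the relevant fibers by Theorem \ref{thm:TI3}. The resulting Morita equivalence with a coherent sheaf on a finite-dimensional piece of $\CV$ forces $\CL_{\hat t}$ to have finite amplitude, and combined with Proposition \ref{prop:hom-dim} and the base case this yields (i).

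The hard part, I expect, will be precisely this last step: propagating the pointwise splitting of Theorem \ref{thm:TI3} into a uniform amplitude bound for $\CL_{\hat t}$ on a formal neighborhood of a regular orbit. One must verify that $\delta$ has sufficiently controllable base change over this neighborhood, that the relevant completion is Noetherian of finite Krull dimension, and that the Morita equivalence provided by the splitting extends across the formal neighborhood in a bounded fashion. In \cite{BMR} the analogous argument rests on smoothness of the centralizer at a regular parameter; the same structural input, now for the geometry of $\CV$ and $\delta$ in the quantum setting, must be checked before the BMR argument can be transcribed.
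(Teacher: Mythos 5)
Your treatment of part (ii) and of the base case for part (i) is in the right spirit: realize $\CL$ by a derived tensor over $U_\zeta\otimes_{Z_{Har}(U_\zeta)}\BC_t$ (resp.\ $U_\zeta$), compute the unit on the rank-one free module using the preceding proposition, and propagate by triangulated nonsense to free resolutions. But your mechanism for bounding the amplitude of $\CL_{\hat{t}}$ in part (i) goes through Theorem \ref{thm:TI3}, and this is a wrong turn. The paper's own reduction (``arguing exactly as in [3.3, 3.4] of BMR'') is stated to use only Theorem \ref{thm:TI2} and Proposition \ref{prop:hom-dim}, and indeed the splitting theorem is never cited for this proposition. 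The reason it is not needed is simple: an Azumaya algebra over a regular Noetherian base is \'etale--locally a matrix algebra over the structure sheaf, so it has finite global dimension and coherent modules over it have finite projective dimension \emph{without} choosing a splitting bundle. The Azumaya property (Theorem \ref{thm:TI2}) alone, together with the finite homological dimension of $U_\zeta$ (Proposition \ref{prop:hom-dim}), is what controls the Tor-amplitude of $Fr_*\DD_{\CB_\zeta}$ over $U_\zeta$.

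The gap you flag at the end---passing from the pointwise splitting of Theorem \ref{thm:TI3} on the individual fibers $\delta^{-1}(k,t)$ to a Morita equivalence on a formal neighborhood---is a genuine one, and it is not closeable with the tools at hand. Theorem \ref{thm:TI3} asserts only fiberwise splitness; the existence of a splitting bundle on a formal neighborhood is a much stronger statement, and in the positive-characteristic analogue it requires the geometry of Kostant slices and an unramifiedness argument (the content of BMR~\S5, not \S3.3--3.4). Invoking Theorem \ref{thm:TI3} therefore sends you into a harder problem than the one Proposition \ref{prop:BB} actually poses, while the route the paper intends bypasses splitting entirely. Two smaller remarks: Proposition \ref{prop:hom-dim} is used precisely to get a bounded projective resolution in the $D^b$ case (i), whereas for the $D^-$ case (ii) one simply takes a bounded-above free resolution and checks the unit termwise, so the ``way-out'' invocation for (ii) is off target; and the regularity hypothesis in (i) does not enter through Azumaya splitting but through the completion step in the BMR reduction.
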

Arguing exactly as in \cite[3.5]{BMR} using 
Theorem \ref{thm:TI2}, Proposition \ref{prop:BB} and
Lemma \ref{lem:symplectic} below we obtain Theorem \ref{BB}. Details are omitted.
\begin{lemma}[\cite{TM}]
\label{lem:symplectic}
The variety $\CV$ is a symplectic manifold.
\end{lemma}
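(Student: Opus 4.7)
The plan is to establish (i) smoothness of $\CV$ and (ii) existence of a non-degenerate closed $2$-form.

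For smoothness, I would realize $\CV = \mu^{-1}(N^-)$ for the map $\mu \colon \CB \times K \times H \to G$, $\mu(B^-g,k,t) = g\kappa(k)g^{-1}\,t^{-2\ell}$, and verify transversality of $\mu$ to $N^-$ along $\CV$. The differential of $\kappa$ at the identity sends $(X,Y)\in\Gk$ to $X-Y$; using the defining relation $X|_{\Gh}+Y|_{\Gh}=0$ together with the triangular decomposition $\Gg = \Gn^- \oplus \Gh \oplus \Gn^+$, this is already surjective onto $\Gg$. Propagating via left translation shows that $d\mu$ restricted to the $K$-directions remains surjective onto $\Gg$ along $\CV$, a fortiori modulo $\Gn^-$, giving smoothness of the expected dimension.

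For the symplectic form, the natural conceptual approach is to identify $\CV$ with a twisted Grothendieck--Springer-type resolution for the Poisson-Lie dual group $K$ of $G$. Setting $\widetilde{K} = \{(B^-g,k) \in \CB \times K \mid g\kappa(k)g^{-1} \in B^-\}$ together with the map $\chi \colon \widetilde{K} \to H$ extracting the $H$-part of $g\kappa(k)g^{-1}$, we have $\CV \cong \widetilde{K} \times_{H,(\cdot)^{2\ell}} H$, which is étale over $\widetilde{K}$. Since $K$ carries the Semenov--Tian--Shansky Poisson structure, whose symplectic leaves are the dressing orbits of $G$, the fibration $\chi$ assembles these leaves into a globally defined symplectic form on $\widetilde{K}$; this pulls back through the étale cover to $\CV$.

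The main difficulty is producing the symplectic form explicitly and verifying non-degeneracy globally. A natural candidate is $\omega = d\alpha$ for an explicit $1$-form $\alpha$ built from the factorization $\kappa(k) = n_+ h^2 n_-^{-1}$ and the canonical invariant bilinear form $(\,,\,)$ on $\Gg$. On the big Bruhat cell (where $g$ may be taken in $N^+$) non-degeneracy can be verified in the resulting local coordinates, and corresponds at the classical level to the non-degeneracy of the Drinfeld pairing; the extension to the closed Bruhat strata requires either a continuity argument or a realization of $\CV$ as a symplectic restriction from a larger ambient symplectic variety (such as $T^*G$ or a Heisenberg double of $G$), along the lines carried out in \cite{TM}.
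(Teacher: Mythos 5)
The paper does not prove this lemma; it is quoted from~\cite{TM}, so there is no in-paper argument for you to compare yours against.

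Your smoothness step is essentially right once $\mu$ is set up correctly: $g\kappa(k)g^{-1}t^{-2\ell}$ changes under $g\mapsto bg$ for $b\in B^-$, so $\mu$ is not a well-defined map $\CB\times K\times H\to G$; one should work on the $B^-$-torsor $G\times K\times H$, or with local sections of $G\to\CB$. The surjectivity of $d\kappa$ from $\Lie K$ onto $\Gg$ is correct. But that very computation gives $\dim\CV = \dim(\CB\times K\times H)-\mathrm{codim}_G N^- = 2|\Delta^+|+\dim\Gh=\dim G$, which is odd for $G=SL_2$; a smooth variety of odd dimension cannot carry a nondegenerate closed $2$-form. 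This makes your main step -- that ``the fibration $\chi$ assembles these leaves into a globally defined symplectic form on $\widetilde{K}$'' -- impossible to carry out, and it is not a valid construction in the first place: sweeping the symplectic leaves of a Poisson manifold out over a base only equips the total space with a Poisson bivector whose rank is the leaf dimension, with functions pulled back from the base as Casimirs; one never obtains a nondegenerate form this way. What can be true, and is what the BMR-style proof of Theorem~\ref{BB} actually requires, is that $\CV$ is a Poisson variety for which $f\colon\CV\to H$ is a Casimir map -- so each fiber of $f$ is a symplectic variety of dimension $2|\Delta^+|$ -- together with triviality of the canonical bundle of $\CV$. Your final suggestion of realizing $\CV$ by Poisson reduction from a Heisenberg double of $G$ is the right kind of idea for producing that Poisson structure, but as you acknowledge it is only a sketch, and it needs to be aimed at the corrected statement rather than the literal one.
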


\subsection{Finite part}
\subsubsection{}

In \cite{TI} we also introduced a quotient algebra $D'_\zeta$ of $E_\zeta$, which is closely related to $D_\zeta$.
Let us recall its definition.
Take bases 
$\{x_p\}_p$, $\{y_p\}_p$, $\{x^L_p\}_p$, $\{y^L_p\}_p$ of
$U^+_\zeta$, $U^-_\zeta$, $U^{L,+}_\zeta$, $U^{L,-}_\zeta$ 
respectively such that 
\[
\tau^L_\zeta(x_{p_1},y^L_{p_2})=\delta_{p_1, p_2},
\qquad
{}^L\tau_\zeta(x^L_{p_1},y_{p_2})=\delta_{p_1, p_2}.
\]
We assume that 
\[
x_p\in U^+_{\zeta,\beta_p},\quad
y_p\in U^-_{\zeta,-\beta_p},\quad
x^L_p\in U^{L,+}_{\zeta,\beta_p},\quad
y^L_p\in U^{L,-}_{\zeta,-\beta_p}
\]
for $\beta_p\in Q^+$.

For $\varphi\in A_\zeta(\lambda)_\xi$ 
with $\lambda\in\Lambda^+,\,\xi\in\Lambda$ we set
\begin{align*}
&\Omega'_1(\varphi)=
\sum_p(y_p^L\cdot\varphi)x_p\in E_{\zeta,\diamondsuit},\\
&\Omega'_2(\varphi)=
\sum_p((Sx_p^L)\cdot\varphi)y_pk_{\beta_p}k_{2\xi}e(-2\lambda)\in E_{\zeta,\diamondsuit},\\
&\Omega'(\varphi)=\Omega_1'(\varphi)-\Omega_2'(\varphi)\in E_{\zeta,\diamondsuit}.
\end{align*}
We extend $\Omega'$ to whole $A_\zeta$ by linearity.
Then $D'_\zeta$ is defined by
\begin{equation*}
D'_\zeta=E_\zeta/\sum_{\varphi\in A_
\zeta}A_\zeta\Omega'(\varphi)U_\zeta\BC[\Lambda].
\end{equation*}
We have a sequence
\[
E_\zeta\to D'_\zeta\to D_\zeta
\]
of surjective homomorphisms of $\Lambda$-graded algebras.
Moreover, $D'_\zeta\to D_\zeta$ induces an isomorphism 
\begin{equation}
\label{eq:omega-cong}
\omega^*D'_\zeta\cong\omega^* D_\zeta
\end{equation}
 in $\Mod(\CO_{\CB_\zeta})$ (see \cite[Corollary 6.6]{TI}).

\subsubsection{}
We set 
\[
U_{\BF,\diamondsuit}^{0}=\bigoplus_{\lambda\in\Lambda}\BF k_{2\lambda}
\subset U_\BF^0,\qquad
U_{\BF,\diamondsuit}=S({U}^-_\BF) U_{\BF,\diamondsuit}^{0}U_\BF^+\subset
U_\BF.
\]
Then we see easily the following.
\begin{lemma}
\label{lem:diamondsuit}
$U_{\BF,\diamondsuit}$ is an $\ad(U_\BF)$-stable subalgebra of $U_\BF$.
\end{lemma}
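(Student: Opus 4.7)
The assertion has two parts: $U_{\BF,\diamondsuit}$ is closed under multiplication, and it is preserved by $\ad(u)$ for every $u\in U_\BF$. Since $\ad:U_\BF\to\End_\BF(U_\BF)$ is an algebra homomorphism, the second statement only needs to be verified on the algebra generators $k_\mu,e_i,f_i$ of $U_\BF$. Moreover, by the triangular decomposition \eqref{eq:tri:1}, every element of $U_\BF$ has a unique expression as a sum $\sum u^-_\gamma k_\mu u^+_\beta$ with $u^-_\gamma\in U^-_{\BF,-\gamma}$ and $u^+_\beta\in U^+_{\BF,\beta}$, and a direct inspection shows that $U_{\BF,\diamondsuit}$ consists of those elements whose normal form involves only terms with $\mu-\gamma\in 2\Lambda$ (the condition is independent of $\beta$).

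For closure under multiplication, I would rewrite a product $(u^-_1 k_{\mu_1} u^+_1)(u^-_2 k_{\mu_2} u^+_2)$ in normal form by commuting $u^+_1$ past $u^-_2$ using Lemma \ref{lem:Drinfeld paring}(ii). Because of Lemma \ref{lem:Drinfeld paring}(iv), the nonzero summands produced by the Drinfeld pairing are indexed by $\alpha,\alpha'\in Q^+$ and give, after reassembling, normal-form terms with $U^-$-weight $-\gamma_1-\gamma_2+\alpha+\alpha'$ and $U^0$-exponent $\mu_1+\mu_2+\alpha'-\alpha$. The difference
\[
(\mu_1+\mu_2+\alpha'-\alpha)-(\gamma_1+\gamma_2-\alpha-\alpha')=(\mu_1-\gamma_1)+(\mu_2-\gamma_2)+2\alpha'
\]
still lies in $2\Lambda$ whenever both $\mu_i-\gamma_i$ do, so $U_{\BF,\diamondsuit}\cdot U_{\BF,\diamondsuit}\subset U_{\BF,\diamondsuit}$. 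As a sanity check one may observe that $U_{\BF,\diamondsuit}$ is in fact the subalgebra generated by $\{e_i,\,f_ik_i,\,k_{2\lambda}\mid i\in I,\,\lambda\in\Lambda\}$, using $S(U^-_{\BF,-\gamma})=U^-_{\BF,-\gamma}k_\gamma$ to identify $S(U^-_\BF)$ as the subalgebra generated by the $f_ik_i=-S(f_i)$.

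For $\ad$-stability, the coproduct formulas $\Delta(k_\mu)=k_\mu\otimes k_\mu$, $\Delta(e_i)=e_i\otimes 1+k_i\otimes e_i$, $\Delta(f_i)=f_i\otimes k_i^{-1}+1\otimes f_i$ together with the antipode values give
\[
\ad(k_\mu)(v)=k_\mu v k_{-\mu},\qquad \ad(e_i)(v)=e_iv-k_ivk_i^{-1}e_i,\qquad \ad(f_i)(v)=f_ivk_i-vf_ik_i.
\]
Conjugation by any $k_\nu$ merely rescales each normal-form summand of $v$, so preserves $U_{\BF,\diamondsuit}$. For $\ad(e_i)$, since $e_i\in U^+_\BF\subset U_{\BF,\diamondsuit}$ and $U_{\BF,\diamondsuit}$ is a subalgebra by the previous paragraph, both summands $e_iv$ and $(k_ivk_i^{-1})e_i$ lie in $U_{\BF,\diamondsuit}$. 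For $\ad(f_i)$, the key point is $f_ik_i=-S(f_i)\in S(U^-_\BF)\subset U_{\BF,\diamondsuit}$: then $v\cdot(f_ik_i)\in U_{\BF,\diamondsuit}$ directly, and $f_ivk_i=(f_ik_i)(k_i^{-1}vk_i)$ is again a product of two elements of $U_{\BF,\diamondsuit}$. The main obstacle is the bookkeeping in the subalgebra step; once that is in hand and one recognises $f_ik_i\in U_{\BF,\diamondsuit}$, the $\ad$-stability is immediate.
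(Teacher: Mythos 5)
The paper states Lemma \ref{lem:diamondsuit} without proof (it is introduced only by ``Then we see easily the following''), so there is nothing to compare against; your argument fills in the gap and it is correct. The reduction is sound: $\ad$ is an algebra homomorphism, so $\ad$-stability need only be checked on the generators $k_\mu$, $e_i$, $f_i$, and the formulas $\ad(k_\mu)(v)=k_\mu vk_{-\mu}$, $\ad(e_i)(v)=e_iv-k_ivk_i^{-1}e_i$, $\ad(f_i)(v)=f_ivk_i-vf_ik_i$ are right. The real content is the closure of $U_{\BF,\diamondsuit}$ under multiplication, and your normal-form characterization (a term $u^-_\gamma k_\mu u^+_\beta$ in the triangular decomposition lies in $U_{\BF,\diamondsuit}$ if and only if $\mu-\gamma\in 2\Lambda$, via $S(U^-_{\BF,-\gamma})=U^-_{\BF,-\gamma}k_\gamma$) is exactly the right invariant to track. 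The weight bookkeeping through Lemma \ref{lem:Drinfeld paring}\,(ii),(iv) is correct: commuting $U^+_{\BF,\beta_1}$ past $U^-_{\BF,-\gamma_2}$ produces terms in $U^-_{\BF,-(\gamma_2-\alpha-\alpha')}\,k_{\alpha'-\alpha}\,U^+_{\BF,\beta_1-\alpha-\alpha'}$, and the resulting shift $(\mu_1-\gamma_1)+(\mu_2-\gamma_2)+2\alpha'$ stays in $2\Lambda$. Once closure under multiplication and $e_i,\,f_ik_i=-S(f_i)\in U_{\BF,\diamondsuit}$ are in hand, the three $\ad$-cases follow immediately as you say. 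A marginally lighter route, equivalent in substance, is to first observe (as you do in your sanity check) that $U_{\BF,\diamondsuit}=\langle e_i,\,\tilde f_i,\,k_{2\lambda}\rangle$ and verify $e_i\tilde f_j-q^{(\alpha_i,\alpha_j)}\tilde f_je_i=\delta_{ij}(k_i^2-1)/(q_i-q_i^{-1})\in U^0_{\BF,\diamondsuit}$ directly to get the subalgebra spanning set $\tilde U^-_\BF U^0_{\BF,\diamondsuit}U^+_\BF$ without invoking the full Drinfeld-pairing expansion; but your more systematic argument has the advantage of handling the general normal-form commutation uniformly.
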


Set
\begin{equation}
U_{\BF,f}=\{u\in U_\BF\mid 
\dim\ad(U_\BF)(u)<\infty\}.
\end{equation}
Then $U_{\BF,f}$ is a subalgebra of $U_\BF$.
Moreover, by Joseph-Letzter \cite{JL2} we have
\begin{equation}
\label{eq:Uf}
U_{\BF,f}=\sum_{\lambda\in\Lambda^+}\ad(U_\BF)(k_{-2\lambda}),
\end{equation}
and hence $U_{\BF,f}$ is a subalgebra of $U_{\BF,\diamondsuit}$.
$U_{\BF,\diamondsuit}$ and $U_{\BF, f}$ are not Hopf subalgebras of $U_\BF$; nevertheless, they satisfy the following.
\begin{lemma}
\label{lem:DeltaAd}
We have
\[
\Delta(U_{\BF,f})\subset U_\BF\otimes U_{\BF,f},
\qquad
\Delta(U_{\BF,\diamondsuit})\subset U_\BF\otimes U_{\BF,\diamondsuit}.
\]
\end{lemma}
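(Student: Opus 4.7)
The plan is to prove the two inclusions separately, using complementary descriptions of the two subalgebras. The inclusion $\Delta(U_{\BF,\diamondsuit})\subset U_\BF\otimes U_{\BF,\diamondsuit}$ is handled by a generator-by-generator check. By its definition $U_{\BF,\diamondsuit}=S(U_\BF^-)U_{\BF,\diamondsuit}^0 U_\BF^+$ is the $\BF$-subalgebra of $U_\BF$ generated by $\{k_{2\lambda},\, e_i,\, S(f_i)=-f_ik_i : \lambda\in\Lambda,\ i\in I\}$, and since $\Delta$ is an algebra homomorphism and $U_\BF\otimes U_{\BF,\diamondsuit}$ is a subalgebra of $U_\BF\otimes U_\BF$, it suffices to verify that $\Delta$ sends each generator into the latter. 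Direct computation gives
\[
\Delta(k_{2\lambda})=k_{2\lambda}\otimes k_{2\lambda},\quad \Delta(e_i)=e_i\otimes 1+k_i\otimes e_i,\quad \Delta(S(f_i))=S(f_i)\otimes 1+k_i\otimes S(f_i),
\]
in each case with every right-hand tensor factor ($k_{2\lambda}$, $1=k_0$, $e_i$, $S(f_i)$) lying in $U_{\BF,\diamondsuit}$.

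For $\Delta(U_{\BF,f})\subset U_\BF\otimes U_{\BF,f}$ I would exploit the Joseph--Letzter description \eqref{eq:Uf}: as a vector space, $U_{\BF,f}=\sum_{\lambda\in\Lambda^+}\ad(U_\BF)(k_{-2\lambda})$. By linearity the claim reduces to showing that $\Delta(\ad(u)(k_{-2\lambda}))\in U_\BF\otimes U_{\BF,f}$ for every $u\in U_\BF$ and $\lambda\in\Lambda^+$. The key technical step is the identity
\[
\Delta(\ad(u)(v))=\sum_{(u),(v)} u_{(0)}\,v_{(0)}\,S(u_{(2)})\otimes \ad(u_{(1)})(v_{(1)}),
\]
valid for all $u,v\in U_\BF$, where $u_{(0)}\otimes u_{(1)}\otimes u_{(2)}=(\Delta\otimes\id)\Delta(u)$ denotes the iterated coproduct. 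One derives this by expanding $\Delta(u_{(0)}v\,S(u_{(1)}))=\Delta(u_{(0)})\Delta(v)\Delta(S(u_{(1)}))$, invoking $\Delta\circ S=(S\otimes S)\circ\tau\circ\Delta$, and then using coassociativity to recognize the inner pair of Sweedler factors in the resulting $4$-fold coproduct as $\Delta$ applied to the middle component of the $3$-fold coproduct.

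Specializing this identity to the group-like element $v=k_{-2\lambda}$ (so that $v_{(0)}=v_{(1)}=k_{-2\lambda}$) yields
\[
\Delta(\ad(u)(k_{-2\lambda}))=\sum_{(u)} u_{(0)}\,k_{-2\lambda}\,S(u_{(2)})\otimes \ad(u_{(1)})(k_{-2\lambda}),
\]
whose second tensor factors lie in $\ad(U_\BF)(k_{-2\lambda})\subset U_{\BF,f}$ by definition, establishing the inclusion. The only real obstacle in the argument is the careful derivation of the coproduct identity for $\ad$, which is a purely formal but bookkeeping-intensive exercise in Sweedler notation; once this is in hand, everything else is routine.
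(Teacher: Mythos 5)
Your proposal is correct and follows essentially the same route as the paper: for $U_{\BF,\diamondsuit}$ a generator-by-generator check using $e_i$, $Sf_i$, $k_{2\lambda}$, and for $U_{\BF,f}$ the Joseph--Letzter description \eqref{eq:Uf} together with the Sweedler-notation regrouping $\Delta(\ad(u)(k_{-2\lambda}))=\sum_{(u)}u_{(0)}k_{-2\lambda}S(u_{(2)})\otimes\ad(u_{(1)})(k_{-2\lambda})$. The only cosmetic difference is that you first state the general coproduct identity for $\Delta(\ad(u)(v))$ and then specialize to the group-like $v=k_{-2\lambda}$, whereas the paper carries out the coassociativity computation directly in that special case; the content is identical.
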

\begin{proof}
For $u\in U_\BF$ and $\lambda\in\Lambda^+$ we have
\begin{align*}
&\Delta(\ad(u)(k_{-2\lambda}))
=\sum_{(u)}\Delta(u_{(0)}k_{-2\lambda}(Su_{(1)})\\
=&\sum_{(u)_3}u_{(0)}k_{-2\lambda}(Su_{(3)})\otimes
u_{(1)}k_{-2\lambda}(Su_{(2)})\\
=&
\sum_{(u)_2}u_{(0)}k_{-2\lambda}(Su_{(2)})\otimes
\ad(u_{(1)})(k_{-2\lambda}).
\end{align*}
Hence the first formula follows from \eqref{eq:Uf}.
Since $U_{\BF,\diamondsuit}$ is generated by $e_i$, $S{f}_i$ for $i\in I$ and $k_{2\lambda}$ for $\lambda\in\Lambda$, 
the second formula is a consequence of the fact that $\Delta(e_i)$, $\Delta(S{f}_i)$, $\Delta(k_{2\lambda})$ belong to $U_\BF\otimes U_{\BF,\diamondsuit}$.
\end{proof}
We set
\begin{align*}
E_{\BF,\diamondsuit}&=
A_\BF\otimes U_{\BF,\diamondsuit}\otimes\BF[\Lambda]
\subset E_\BF,
\\
E_{\BF,f}&=
A_\BF\otimes
U_{\BF,f}\otimes\BF[\Lambda]
\subset E_\BF.
\end{align*}
By Lemma \ref{lem:DeltaAd} they are subalgebras of $E_\BF$.

We set 
\begin{align*}
&U_{\BA,\diamondsuit}^{0}=U_{\BF,\diamondsuit}^{0}\cap U_\BA
=\bigoplus_{\lambda\in\Lambda}\BA k_{2\lambda},\quad
U_{\BA,\diamondsuit}=
U_{\BF,\diamondsuit}\cap U_\BA
=S({U}^-_\BA) U_{\BA,\diamondsuit}^{0}U_\BA^+,\\
&U_{\BA,f}=U_{\BA}\cap U_{\BF,f},
\end{align*}
and
\begin{align*}
E_{\BA,\diamondsuit}&=E_\BA\cap E_{\BF,\diamondsuit}
=A_\BA\otimes U_{\BA,\diamondsuit}\otimes\BA[\Lambda]
\subset E_{\BF,\diamondsuit},
\\
E_{\BA,f}&=E_\BA\cap E_{\BF,f}
=A_\BA\otimes U_{\BA,f}\otimes\BA[\Lambda]
\subset E_{\BF,f},
\end{align*}

We also set 
\begin{align*}
E_{\zeta,\diamondsuit}&=
\BC\otimes_\BA E_{\BA,\diamondsuit}=
A_\zeta\otimes U_{\zeta,\diamondsuit}\otimes\BC[\Lambda]
\subset E_\zeta,
\\
E_{\zeta,f}&=
\BC\otimes_\BA E_{\BA,f}=
A_\zeta\otimes U_{\zeta,f}\otimes\BC[\Lambda]
\subset E_\zeta,
\end{align*}
and
\begin{align*}
D_{\zeta,\diamondsuit}
&=\Image(E_{\zeta,\diamondsuit}\to D_\zeta),\qquad
D_{\zeta,f}
=\Image(E_{\zeta,f}\to D_\zeta),\\
D'_{\zeta,\diamondsuit}
&=\Image(E_{\zeta,\diamondsuit}\to D'_\zeta),\qquad
D'_{\zeta,f}
=\Image(E_{\zeta,f}\to D'_\zeta).
\end{align*}
By
\[
E_\zeta\cong E_{\zeta,\diamondsuit}\otimes_{U_{\zeta,\diamondsuit}}
U_{\zeta}
\]
we obtain
\begin{align}
\label{eq:D-prime}
&D'_{\zeta,\diamondsuit}=E_{\zeta,\diamondsuit}/\sum_{\varphi\in A_
\zeta}A_
\zeta\Omega'(\varphi)U_{\zeta,\diamondsuit}\BC[\Lambda],
\\
&D'_\zeta\cong D'_{\zeta,\diamondsuit}\otimes_{U_{\zeta,\diamondsuit}}
U_{\zeta}.
\end{align}
\subsubsection{}
Since $U_{\zeta}$ is a free $U_{\zeta,\diamondsuit}$-module, we have
\[
R^i\Gamma(\omega^*D'_\zeta)
\cong
R^i\Gamma(\omega^*D'_{\zeta,\diamondsuit})
\otimes_{U_{\zeta,\diamondsuit}}
U_{\zeta}
\]
for any $i\in\BZ$.
Since $U_{\zeta,\diamondsuit}$ is a localization of 
$U_{\zeta,f}$ with respect to the Ore subset $\{k_{-2\lambda}\mid\Lambda\in\Lambda^+\}$, we have
\[
R^i\Gamma(\omega^*D'_{\zeta,\diamondsuit})
\cong
R^i\Gamma(\omega^*D'_{\zeta,f})\otimes_{U_{\zeta,f}}U_{\zeta,\diamondsuit}
\]
for any $i\in \BZ$.
It follows that 
\begin{equation}
R^i\Gamma(\omega^*D'_\zeta)
\cong
R^i\Gamma(\omega^*D'_{\zeta,f})
\otimes_{U_{\zeta,f}}
U_{\zeta}
\end{equation}
for any $i\in\BZ$.
Note 
\[
R^i\Gamma(\CB,Fr_*\DD_{\CB_\zeta})
\cong
R^i\Gamma(\omega^*D'_\zeta)
\]
by Lemma \ref{lem:eq-Fr} and \eqref{eq:omega-cong}.
Hence Conjecture \ref{conj} is a consequence of the following stronger conjecture.
\begin{conjecture}
\label{conj2}
Assume $\ell>h_G$.
We have
\[
\Gamma(\omega^*D'_{\zeta,f})\cong 
U_{\zeta,f}\otimes_{Z_{Har}(U_\zeta)}\BC[\Lambda],
\]
and
\[
R^i\Gamma(\omega^*D'_{\zeta,f})=0
\]
for $i\ne0$.
\end{conjecture}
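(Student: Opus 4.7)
The plan is to reformulate Conjecture~\ref{conj2} via the quantum induction functor $\Ind_{B_\zeta^-}^{G_\zeta}$ of Backelin-Kremnizer and thereby reduce it to a cohomological vanishing statement for this functor applied to $U_{\zeta,f}\otimes\BC[\Lambda]$, viewed as a $B_\zeta^-$-module via the adjoint action combined with the natural grading on $\BC[\Lambda]$. This is of course precisely the reformulation promised in the introduction (Conjecture~3 there), so the proof strategy is to make the bridge between the two formulations explicit and then import the \cite{BK2} vanishing.

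First, I would re-express $\omega^*D'_{\zeta,f}$ in induction-theoretic terms. Using \eqref{eq:D-prime}, the algebra $D'_{\zeta,f}$ is the quotient of $E_{\zeta,f}=A_\zeta\otimes U_{\zeta,f}\otimes\BC[\Lambda]$ by the left ideal generated by $\Omega'(\varphi)$. Direct inspection of the formulas defining $\Omega'_1$ and $\Omega'_2$ shows that these relations encode exactly the compatibility between the $A_\zeta$-multiplication, the adjoint action of $U_\zeta^L$ on $U_{\zeta,f}$ (which preserves $U_{\zeta,f}$ by Lemma~\ref{lem:ad}), and the $\BC[\Lambda]$-grading; this is precisely the compatibility defining the universal equivariant object on $\CB_\zeta$. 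One would therefore expect the identification
\[
R\Gamma(\omega^*D'_{\zeta,f}) \;\cong\; R\Ind_{B_\zeta^-}^{G_\zeta}(U_{\zeta,f}\otimes\BC[\Lambda]).
\]

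Second, I would compute the zeroth induction explicitly. By \eqref{eq:Uf} and Joseph-Letzter, $U_{\zeta,f}$ admits an $\ad(U_\zeta^L)$-stable filtration whose associated graded is a direct sum of finite-dimensional type-$1$ $U_\zeta^L$-modules tensored with weight lines $\BC k_{-2\lambda}$. A Frobenius-reciprocity calculation on each graded piece, together with Lemma~\ref{lem:HCD} (which forces $z\in Z_{Har}(U_\zeta)$ to act on sections via $\iota(z)\in\BC[\Lambda]$), should yield
\[
\Gamma(\omega^*D'_{\zeta,f}) \;\cong\; U_{\zeta,f}\otimes_{Z_{Har}(U_\zeta)}\BC[\Lambda],
\]
where the freeness of $\BC[\Lambda]$ over $Z_{Har}(U_\zeta)$ (Steinberg) is used to ensure that the tensor-product description is well-behaved and that the filtration argument assembles correctly in degree zero.

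The main obstacle, and the reason the statement is left conjectural, is the higher vanishing $R^i\Ind_{B_\zeta^-}^{G_\zeta}(U_{\zeta,f}\otimes\BC[\Lambda])=0$ for $i>0$. Backelin-Kremnizer \cite{BK2} establish such a vanishing in the quantum case, but only outside a finite set of exceptional values of $\ell$. To handle the infinite-dimensional $B_\zeta^-$-module $U_{\zeta,f}\otimes\BC[\Lambda]$ one would split it via the Joseph-Letzter filtration and apply a quantum Kempf-type vanishing on each finite-dimensional graded piece under the hypothesis $\ell>h_G$; the delicate point is that the whole module is unbounded in weight, so assembling the vanishings requires either an explicit $B_\zeta^-$-module description of $U_{\zeta,f}$ or a strengthening of the Backelin-Kremnizer theorem that removes the finitely many exceptional $\ell$. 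This is precisely the cohomological input that is currently missing, and is what I expect to be by far the hardest part of any proof.
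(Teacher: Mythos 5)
The statement under review is a conjecture; the paper does not prove it. What the paper does prove is a reformulation (Proposition~\ref{prop:RiInd}, giving the equivalence with Conjecture~\ref{conj3}), and it then notes that the reformulated statement is essentially an assertion in \cite{BK2} holding for all but finitely many $\ell$. Your high-level plan matches this: reformulate via the induction functor, compute the relevant fiber, and identify the obstruction as the missing strengthening of the Backelin--Kremnizer vanishing. You correctly identify the ultimate difficulty.

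However, your central intermediate identification is wrong. You claim
\[
R\Gamma(\omega^*D'_{\zeta,f}) \;\cong\; R\Ind_{B_\zeta^-}^{G_\zeta}\bigl(U_{\zeta,f}\otimes\BC[\Lambda]\bigr),
\]
but the correct object to induce from is the fiber $V=\BC\otimes_{A_\zeta}D'_{\zeta,f}$, which the paper computes (Lemmas~\ref{lem:key-calculation} and~\ref{lem:key-calculation2}, Proposition~\ref{prop:RiInd}) to be $(U_{\zeta,\diamondsuit}\otimes\BC[\Lambda])/\CI$, isomorphic as a vector space to $U_\zeta^+\otimes\BC[\Lambda]$. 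The ideal $\CI$ kills $\tilde U_\zeta^-\cap\Ker\varepsilon$ and identifies $k_{2\lambda}$ with $e(2\lambda)$; this is precisely what restricting to the base-point fiber of the flag variety is supposed to do. The correct reformulation is $R\Gamma(\omega^*D'_{\zeta,f})\cong R\Ind(V)$, and $U_{\zeta,f}\otimes_{Z_{Har}(U_\zeta)}\BC[\Lambda]$ is the conjectured \emph{output} of the induction, not its input. You have swapped the two. Moreover, $U_{\zeta,f}$ with the adjoint action is a full $U_\zeta^L$-module; inducing $U_{\zeta,f}\otimes\BC[\Lambda]$ from $B_\zeta^-$ would, by the tensor identity (Lemma~\ref{lem:Ind-formula}), produce $\Ind(\BC[\Lambda])\otimes U_{\zeta,f}$, which is not the conjectured answer (it has extra Weyl-module factors in dominant weights and vanishes in non-dominant ones). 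Consequently the Joseph--Letzter filtration argument and Frobenius-reciprocity computation in your second paragraph are being applied to the wrong object and cannot assemble to the claimed isomorphism. To repair the argument you would need to compute the fiber of $D'_{\zeta,f}$ explicitly, which requires controlling both the $\Omega'_1$ and $\Omega'_2$ relations (not just the adjoint compatibility) and passing carefully between $D'_{\zeta,\diamondsuit}$ and $D'_{\zeta,f}$, as in Lemma~\ref{lem:key-calculation2}.
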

In the rest of this paper we give a reformulation of Conjecture \ref{conj2} in terms of the induction functor.

\section{Representations}
\subsection{}
For simplicity we introduce a new notation $\tilde{U}^-_\BF=S(U^-_\BF)$.
Then we have $\tilde{U}^-_\BF=\langle \tilde{f}_i\mid i\in I\rangle$, where 
$\tilde{f}_i=f_ik_i$ for $i\in I$.
Moreover, setting 
\begin{align*}
\tilde{U}^-_{\BF,\gamma}&=
\{u\in \tilde{U}_\BF^-\mid k_\mu u k_{-\mu}=
q^{(\gamma,\mu)}u\quad(\mu\in\Lambda)\}
\end{align*}
for $\gamma\in Q$ we have
\[
\tilde{U}^-_{\BF}=\bigoplus_{\gamma\in Q^+}\tilde{U}^-_{\BF,-\gamma},
\qquad
\tilde{U}_{\BF,-\gamma}^-=
{U}_{\BF,-\gamma}^-k_\gamma\quad
(\gamma\in Q^+).
\]
We also set
\begin{align*}
\tilde{U}_\BA&=U_\BA\cap\tilde{U}_\BF,\qquad
\tilde{U}_{\BA,-\gamma}=
U_\BA\cap\tilde{U}_{\BF,-\gamma}\quad(\gamma\in Q^+),\\
\tilde{U}_\zeta&=
\BC\otimes_\BA \tilde{U}_\BA,\qquad
\tilde{U}_{\zeta,-\gamma}=
\BC\otimes_\BA \tilde{U}_{\BA,-\gamma}\quad(\gamma\in Q^+).
\end{align*}
Then we have
\[
\tilde{U}^-_{\BA}=\bigoplus_{\gamma\in Q^+}\tilde{U}^-_{\BA,-\gamma},
\qquad
\tilde{U}^-_{\zeta}=\bigoplus_{\gamma\in Q^+}\tilde{U}^-_{\zeta,-\gamma}.
\]

\subsection{}
For $\lambda\in\Lambda$ we define an algebra homomorphism $\chi_\lambda:U_\BF^0\to\BF$ by $\chi_\lambda(k_\mu)=q^{(\lambda,\mu)}\,(\mu\in\Lambda)$.
For $M\in\Mod(U_\BF)$ and $\lambda\in\Lambda$ we set
\[
M_\lambda=\{m\in M\mid
hm=\chi_\lambda(h)m\quad(h\in U_\BF^0)\}.
\]

For $\lambda\in\Lambda$ we define
$M_{+,\BF}(\lambda), M_{-,\BF}(\lambda)\in\Mod(U_\BF)$
by
\begin{align*}
M_{+,\BF}(\lambda)
=&U_\BF/
\sum_{y\in U_\BF^-}U_\BF(y-\varepsilon(y))+
\sum_{h\in U_\BF^0}U_\BF(h-\chi_\lambda(h)),\\
M_{-,\BF}(\lambda)
=&U_\BF/
\sum_{x\in U_\BF^+}U_\BF(x-\varepsilon(x))+
\sum_{h\in U_\BF^0}U_\BF(h-\chi_\lambda(h)).
\end{align*}
$M_{+,\BF}(\lambda)$ is a lowest weight module with lowest weight $\lambda$, and $M_{-,\BF}(\lambda)$ is a highest weight module with highest weight $\lambda$.
We have isomorphisms 
\[
M_{+,\BF}(\lambda)
\cong
U_\BF^{+}\quad
(\overline{u}\leftrightarrow u),\qquad
M_{-,\BF}(\lambda)
\cong
U_\BF^{-}\quad
(\overline{u}\leftrightarrow u)
\]
of $\BF$-modules.
Moreover, we have weight space decompositions
\[
M_{+,\BF}(\lambda)
=\bigoplus_{\mu\in\lambda+Q^+}M_{+,\BF}(\lambda)_\mu,\qquad
M_{-,\BF}(\lambda)
=\bigoplus_{\mu\in\lambda-Q^+}M_{-,\BF}(\lambda)_\mu.
\]

For $\lambda\in\Lambda^+$ we define 
$L_{+,\BF}(-\lambda), L_{-,\BF}(\lambda)\in\Mod_f(U_\BF)$
by
\begin{align*}
L_{+,\BF}(-\lambda)
=&U_\BF/
\sum_{y\in U_\BF^-}U_\BF(y-\varepsilon(y))+
\sum_{h\in U_\BF^0}U_\BF(h-\chi_{-\lambda}(h))
+\sum_{i\in I}U_\BF e_i^{((\lambda,\alpha_i^\vee)+1)},\\
L_{-,\BF}(\lambda)
=&U_\BF/
\sum_{x\in U_\BF^+}U_\BF(x-\varepsilon(x))+
\sum_{h\in U_\BF^0}U_\BF(h-\chi_\lambda(h))
+\sum_{i\in I}U_\BF f_i^{((\lambda,\alpha_i^\vee)+1)}.
\end{align*}
$L_{+,\BF}(-\lambda)$ is a finite-dimensional irreducible lowest weight module with lowest weight $-\lambda$, and $L_{-,\BF}(\lambda)$ is a finite-dimensional irreducible highest weight module with highest weight $\lambda$.
We have weight space decompositions
\[
L_{+,\BF}(-\lambda)
=\bigoplus_{\mu\in-\lambda+Q^+}L_{+,\BF}(-\lambda)_\mu,\qquad
L_{-,\BF}(\lambda)
=\bigoplus_{\mu\in\lambda-Q^+}L_{-,\BF}(\lambda)_\mu.
\]
For $\lambda\in\Lambda^+$ we have isomorphisms
\begin{align*}
&L_{+,\BF}(-\lambda)\cong{U}_\BF^{L,+}/
\sum_{i\in I}U_\BF^{L,+} e_i^{((\lambda,\alpha_i^\vee)+1)},
\quad(\overline{u}\leftrightarrow \overline{u})
,\\
&L_{-,\BF}(\lambda)\cong\tilde{U}_\BF^{L,-}/
\sum_{i\in I}\tilde{U}_\BF^{L,-} \tilde{f}_i^{((\lambda,\alpha_i^\vee)+1)},
\quad(\overline{u}\leftrightarrow \overline{u})
\end{align*}
of vector spaces (Lusztig \cite{L1}).

Let $M$ be a $U_\BF$-module with weight space decomposition
$M=\bigoplus_{\mu\in\Lambda}M_\mu$ such that $\dim M_\mu<\infty$ for any $\mu\in\Lambda$.
We define a  $U_\BF$-module $M^\bigstar$ by
\[
M^\bigstar=\bigoplus_{\mu\in\Lambda}
M_\mu^*\subset M^*=\Hom_\BF(M,\BF),
\]
where the action of $U_\BF$ is given by
\[
\langle um^*,m\rangle=\langle m^*,(Su)m\rangle
\qquad(u\in U_\BF, m^*\in M^\bigstar, m\in M).
\]
Here $\langle\,,\,\rangle:M^\bigstar\times M\to\BF$ is the natural pairing.

We set
\begin{align*}
M^*_{\pm,\BF}(\lambda)&=(M_{\mp,\BF}(-\lambda))^\bigstar\qquad
(\lambda\in\Lambda),\\
L^*_{\pm,\BF}(\mp\lambda)&=(L_{\mp,\BF}(\pm\lambda))^\bigstar\qquad
(\lambda\in\Lambda^+).
\end{align*}
Since $L_{\mp,\BF}(\pm\lambda)$ is irreducible we have
\[
L^*_{\pm,\BF}(\mp\lambda)\cong L_{\pm,\BF}(\mp\lambda)\qquad
(\lambda\in\Lambda^+).
\]
We define isomorphisms
\begin{equation}
\label{eq:PhiPsi}
\Phi_\lambda:{U}^{+}_\BF\to M^*_{+,\BF}(\lambda),\qquad
\Psi_\lambda:\tilde{U}^{-}_\BF\to M^*_{-,\BF}(\lambda)
\end{equation}
of vector spaces by
\begin{align*}
&\langle\Phi_\lambda(x),\overline{v}\rangle
=\tau(x, v)\qquad
(x\in U_\BF^{+}, v\in \tilde{U}_\BF^{-}),\\
&\langle\Psi_\lambda(y),\overline{Su}\rangle
=\tau(u, y)\qquad
(y\in \tilde{U}_\BF^{-}, u\in {U}_\BF^{+}).
\end{align*}

\begin{lemma}
\label{lemma:PhiPsi0}
\begin{itemize}
\item[\rm(i)]
The $U_\BF$-module structure of $M^*_{+,\BF}(\lambda)$ is given by
\begin{align}
\label{eq:dualVerma1:F}
h\cdot\Phi_\lambda(x)&=
\chi_{\lambda+\gamma}(h)
\Phi_\lambda(x)\quad
(x\in U^+_{\BF,\gamma}, h\in U^{0}_\BF),\\
\label{eq:dualVerma2:F}
v\cdot\Phi_\lambda(x)&=
\sum_{(x)}\tau(x_{(0)},Sv)\Phi_\lambda(x_{(1)})
\quad
(x\in U^+_\BF, v\in U^{-}_\BF),\\
\label{eq:dualVerma3:F}
u\cdot\Phi_\lambda(x)&=
\Phi_\lambda(
k_{-\lambda}(\ad(u)(k_\lambda xk_\lambda))k_{-\lambda})
\quad
(x\in U^+_\BF,\,\,u\in U_\BF^{+}).
\end{align}
\item[\rm(ii)]
The $U_\BF$-module structure of $M^*_{-,\BF}(\lambda)$ is given by
\begin{align}
\label{eq:dualVerma4:F}
&h\cdot\Psi_\lambda(y)=
\chi_{\lambda-\gamma}(h)
\Psi_\lambda(y)\quad(y\in \tilde{U}^-_{\BF,-\gamma}, h\in U^{0}_\BF),\\
\label{eq:dualVerma5:F}
&u\cdot\Psi_\lambda(y)=
\sum_{(y)}\tau(u,y_{(0)})\Psi_\lambda(y_{(1)})
\quad(y\in \tilde{U}^-_\BF, u\in U^{+}_\BF),\\
\label{eq:dualVerma6:F}
&v\cdot\Psi_\lambda(y)=
\Psi_\lambda(
k_\lambda(\ad(v)(k_{-\lambda} yk_{-\lambda}))k_\lambda)
\quad
(y\in \tilde{U}^-_\BF,\,\,v\in U^{-}_{\BF}).
\end{align}
\end{itemize}
\end{lemma}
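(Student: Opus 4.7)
The proof is a direct verification from the definitions. The isomorphism $\Phi_\lambda$ is built from the Drinfeld pairing, so the $U_\BF$-action on $M^*_{+,\BF}(\lambda)$, transported to $U^+_\BF$ via $\Phi_\lambda$, must come from identities satisfied by $\tau$. Since $\langle u\cdot\Phi_\lambda(x),\overline{w}\rangle = \langle\Phi_\lambda(x),S(u)\overline{w}\rangle = \tau(x,S(u)w)$ for $w\in\tilde{U}^-_\BF$ (using that $M_{-,\BF}(-\lambda)$ is the quotient of $U_\BF$ by a left ideal, so the bar map intertwines left $U_\BF$-multiplication), the three formulas reduce to three special cases of evaluating $\tau(x,S(u)w)$ when $u$ lies in $U^0_\BF$, $U^-_\BF$, or $U^+_\BF$. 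Part (ii) is entirely symmetric, with the roles of $U^+$ and $\tilde{U}^-$ swapped and the companion pairing identity $\tau(x_1x_2,y)=\sum\tau(x_2,y_{(0)})\tau(x_1,y_{(1)})$ used in place of $\tau(x,y_1y_2)=\sum\tau(x_{(0)},y_1)\tau(x_{(1)},y_2)$.

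For \eqref{eq:dualVerma1:F}, observe that $\tilde{U}^-_{\BF,-\gamma}=U^-_{\BF,-\gamma}k_\gamma$, so Lemma \ref{lem:Drinfeld paring}(iii)--(iv) gives $\tau(U^+_{\BF,\gamma'},\tilde{U}^-_{\BF,-\gamma})=0$ unless $\gamma'=\gamma$; hence $\Phi_\lambda(x)$ for $x\in U^+_{\BF,\gamma}$ is supported on the weight space $M_{-,\BF}(-\lambda)_{-\lambda-\gamma}$, from which the $U^0_\BF$-eigenvalue $\chi_{\lambda+\gamma}(h)$ is read off. For \eqref{eq:dualVerma2:F}, with $v\in U^-_\BF$ and $w\in\tilde{U}^-_\BF$, apply $\tau(x,y_1y_2)=\sum\tau(x_{(0)},y_1)\tau(x_{(1)},y_2)$ to the product $S(v)w$, yielding $\tau(x,S(v)w)=\sum\tau(x_{(0)},S(v))\tau(x_{(1)},w)$, which is precisely $\sum\tau(x_{(0)},Sv)\langle\Phi_\lambda(x_{(1)}),\overline{w}\rangle$.

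Formula \eqref{eq:dualVerma3:F} is the substantive step. Here $u\in U^+_\BF$ so $S(u)\in U_\BF^{\geqq 0}$, and one must push $S(u)$ past $w\in\tilde{U}^-_\BF$ to a form in which the residual $U_\BF^{\geqq 0}$-factor, acting on the highest weight vector $\overline{1}_{-\lambda}$, turns into a $\chi_{-\lambda}$-scalar. I would apply the straightening identity of Lemma \ref{lem:Drinfeld paring}(ii) to write $S(u)w=\sum\tau(S(u)_{(0)},w_{(0)})\tau(S(u)_{(2)},S(w_{(2)}))\,w_{(1)}S(u)_{(1)}$, then use $\Delta S=(S\otimes S)\Delta^{\mathrm{op}}$ to re-express $\Delta^{(2)}(S(u))$ in terms of $\Delta^{(2)}(u)$, together with $\tau(Sx,Sy)=\tau(x,y)$ from Lemma \ref{lem:Drinfeld paring}(i) to convert every $\tau$-factor back to one involving $u$ rather than $S(u)$. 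The $k_{\pm\lambda}$ conjugations in the claim arise from the $\chi_{-\lambda}$ scalars together with the identity $S^2(a)=k_{2\rho}\,a\,k_{-2\rho}$ on type-$1$ modules. The definition $\ad(u)(v)=\sum u_{(0)}\,v\,S(u_{(1)})$ then reassembles the sum into $\Phi_\lambda\bigl(k_{-\lambda}\ad(u)(k_\lambda x k_\lambda)k_{-\lambda}\bigr)$.

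The main obstacle is the bookkeeping in this last step: the multiple applications of $S$ and $\Delta$ produce a thicket of $k_\mu$-factors, and recognizing their precise rearrangement as conjugation by $k_{\pm\lambda}$ demands care. A safer route I would fall back on is to prove \eqref{eq:dualVerma3:F} first on the generators $u=e_i$, where $\ad(e_i)(v)=e_i v-k_i v k_i^{-1}e_i$ makes both sides explicit, and then extend to general $u\in U^+_\BF$ by the two algebra-homomorphism properties $\ad(u_1u_2)=\ad(u_1)\ad(u_2)$ and $(u_1u_2)\cdot\Phi_\lambda(x)=u_1\cdot(u_2\cdot\Phi_\lambda(x))$.
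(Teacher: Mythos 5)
Your overall framework and the arguments for \eqref{eq:dualVerma1:F} and \eqref{eq:dualVerma2:F} match the paper exactly: the transport $\langle u\cdot\Phi_\lambda(x),\overline{w}\rangle=\tau(x,S(u)w)$, the weight-space argument for the $U^0_\BF$ case, and the coproduct identity $\tau(x,y_1y_2)=\sum\tau(x_{(0)},y_1)\tau(x_{(1)},y_2)$ for the $U^-_\BF$ case. And you correctly identify the straightening identity (Lemma \ref{lem:Drinfeld paring}(ii)) as the engine behind \eqref{eq:dualVerma3:F}.

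Where you diverge is in the last step: you propose converting both resulting $\tau$-factors back to expressions in $u$ via $\tau(Sx,Sy)=\tau(x,y)$, invoking $S^2(a)=k_{2\rho}\,a\,k_{-2\rho}$. The paper avoids $S^2$ altogether by treating the two factors asymmetrically: $\tau(Su_{(0)},Sv_{(2)})$ is converted via Lemma \ref{lem:Drinfeld paring}(i) to $\tau(u_jk_{\beta'_j},v_{(2)})$, but $\tau(Su_{(2)},v_{(0)})$ is left as $\tau(Su'_j,v_{(0)})$; the three factors are then reassembled directly into $\tau\bigl(u_jk_{\beta'_j}\,x\,(Su'_j),v\bigr)$ using the product rule, and the $q^{(\lambda,\beta'_j-\beta_j)}$ scalars produced by the $\chi_{-\lambda}$-action of the middle $k$-leg are exactly what turn this sum into $k_{-\lambda}\ad(u)(k_\lambda xk_\lambda)k_{-\lambda}$. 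So no $k_{\pm2\rho}$'s ever appear. (Incidentally, with the paper's conventions $S^2(e_i)=q_i^{-2}e_i$ while $k_{2\rho}e_ik_{-2\rho}=q_i^{2}e_i$, so your formula has the conjugation backwards; and $S^2$ is an operator on $U_\BF$, not a statement ``on type-1 modules''.) Your route might still close after the extra cancellations, but the paper's is cleaner and you should be aware it does not pass through $S^2$.

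Your fallback — verify \eqref{eq:dualVerma3:F} for $u=e_i$ using $\ad(e_i)(v)=e_iv-k_ivk_i^{-1}e_i$ and then extend by multiplicativity — is a genuinely different and legitimate route not taken in the paper. It trades the coproduct bookkeeping for the (easy but not free) observation that $x\mapsto k_{-\lambda}\ad(u)(k_\lambda xk_\lambda)k_{-\lambda}$ defines a $U^+_\BF$-action on $U^+_\BF$, and is probably the safer way to write this up if the direct computation becomes unwieldy.
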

\begin{proof}
We will only prove (i). 
The proof of (ii) is similar and omitted.
Note that for $x\in U^+_\BF$, $a\in U_\BF$, $v\in \tilde{U}^{-}_\BF$ we have
\[
\langle a\cdot\Phi_\lambda(x),\overline{v}\rangle
=
\langle \Phi_\lambda(x),\overline{(Sa)v}\rangle.
\]
Let us show \eqref{eq:dualVerma1:F}.
For $v\in
\tilde{U}^{-}_{\BF,-\delta}$ we have
\begin{align*}
&\langle h\cdot\Phi_\lambda(x),\overline{v}\rangle
=\langle \Phi_\lambda(x),\overline{(Sh)v}\rangle
=\delta_{\gamma,\delta}
\langle \Phi_\lambda(x),\overline{(Sh)v}\rangle\\
=&\delta_{\gamma,\delta}\chi_{\lambda+\gamma}(h)
\langle \Phi_\lambda(x),\overline{v}\rangle
=\chi_{\lambda+\gamma}(h)
\langle \Phi_\lambda(x),\overline{v}\rangle.
\end{align*}
Hence\eqref{eq:dualVerma1:F} holds.
Let us next show \eqref{eq:dualVerma2:F}.
For $v\in\tilde{U}^{-}_{\BF}$ we have
\begin{align*}
&\langle y\cdot\Phi_\lambda(x),\overline{v}\rangle
=\langle \Phi_\lambda(x),\overline{(Sy)v}\rangle
=\tau(x,(Sy)v)
=\sum_{(x)}\tau(x_{(0)},Sy)\tau(x_{(1)},v)\\
=&\left\langle
\Phi_\lambda
\left(\sum_{(x)}\tau(x_{(0)},Sy)x_{(1)}
\right),\overline{v}\right\rangle
\end{align*}
Hence \eqref{eq:dualVerma2:F} also holds.
Let us finally show \eqref{eq:dualVerma3:F}.
We may assume that $u\in U^{+}_{\BF,\beta}$ for some $\beta\in Q^+$.
Then we can write
\[
\Delta u
=\sum_j
u_jk_{\beta'_j}\otimes u'_j\quad
(\beta_j, \beta'_j\in Q^+, \beta_j+\beta'_j=\beta,\,\,
u_j\in U^+_{\BF,\beta_j},\,\,
u'_j\in U^+_{\BF,\beta'_j}).
\]
For $v\in\tilde{U}^{-}_{\BF}$ we have
\begin{align*}
&\langle u\cdot\Phi_\lambda(x),\overline{v}\rangle
=\langle \Phi_\lambda(x),\overline{(Su)v}\rangle
\\
=&\sum_{(u)_2, (v)_2}
\tau(Su_{(2)},v_{(0)})\tau(Su_{(0)},Sv_{(2)})
\langle \Phi_\lambda(x),\overline{v_{(1)}(Su_{(1)})}\rangle\\
=&
\sum_{j, (v)_2}
\tau(Su'_j,v_{(0)})\tau(u_jk_{\beta'_j},v_{(2)})
\langle \Phi_\lambda(x),\overline{v_{(1)}(Sk_{\beta'_j})}\rangle\\
=&
\sum_{j, (v)_2}
q^{(\lambda,\beta_j'-\beta_j)}
\tau(Su'_j,v_{(0)})\tau(u_jk_{\beta'_j},v_{(2)})
\langle \Phi_\lambda(x),\overline{v_{(1)}k_{-\beta_j}}\rangle\\
=&
\sum_{j, (v)_2}
q^{(\lambda,\beta_j'-\beta_j)}
\tau(Su'_j,v_{(0)})\tau(u_jk_{\beta'_j},v_{(2)})
\tau(x,{v_{(1)}k_{-\beta_j}})\\
=&
\sum_{j, (v)_2}
q^{(\lambda,\beta_j'-\beta_j)}
\tau(Su'_j,v_{(0)})
\tau(x,{v_{(1)}})
\tau(u_jk_{\beta'_j},v_{(2)})
\\
=&
\sum_{j}
q^{(\lambda,\beta_j'-\beta_j)}
\tau(u_jk_{\beta'_j}x(Su'_j),v)
\\=&
\langle\Phi_\lambda(
k_{-\lambda}(\ad(u)(k_\lambda xk_\lambda))k_{-\lambda}),\overline{v}\rangle.
\end{align*}
Here, we have used Lemma \ref{lem:Drinfeld pairing}.
Note also that
$\Delta\tilde{U}_{\BF}^-\subset
\sum_{\gamma\in Q^+}
\tilde{U}_{\BF}^-k_\gamma\otimes
\tilde{U}_{\BF,-\gamma}^-$, and hence
$\Delta_2\tilde{U}_{\BF}^-\subset
\sum_{\gamma,\delta\in Q^+}
\tilde{U}_{\BF}^-k_{\gamma+\delta}
\otimes
\tilde{U}_{\BF,-\gamma}^-k_\delta
\otimes
\tilde{U}_{\BF,-\delta}^-
$.
\eqref{eq:dualVerma3:F} is proved.
\end{proof}
For $\lambda\in\Lambda$ we denote by $\BF_\lambda^{\geqq0}=\BF1_\lambda^{\geqq0}$ 
(resp.\
$\BF_\lambda^{\leqq0}=\BF1_\lambda^{\leqq0}$)
the one-dimensional $U_\BF^{\geqq0}$-module
(resp.\
$U_\BF^{\leqq0}$-module)
such that 
$h1_\lambda^{\geqq0}=\chi_\lambda(h)1_\lambda^{\geqq0}$,
$u1_\lambda^{\geqq0}=\varepsilon(u)1_\lambda^{\geqq0}$
for $h\in U_\BF^0$ and $u\in U_\BF^+$
(resp.\
$h1_\lambda^{\leqq0}=\chi_\lambda(h)1_\lambda^{\leqq0}$,
$u1_\lambda^{\leqq0}=\varepsilon(u)1_\lambda^{\leqq0}$
for $h\in U_\BF^0$ and $u\in U_\BF^-$).

Note that for any $\lambda\in\Lambda$, $k_{-2\lambda}U^+_\BF$ (resp.\ $\tilde{U}_\BF^-k_{-2\lambda}$) is $\ad(U_\BF^{\geqq0})$-stable (resp.\ $\ad(U_\BF^{\leqq0})$-stable).
We see easily from Lemma \ref{lemma:PhiPsi0} the following.
\begin{lemma}
\label{lem:PhiPsiisom}
Let $\lambda\in\Lambda$.
\begin{itemize}
\item[(i)]
The linear map
\[
k_{-2\lambda}U^+_\BF\to
M^*_{+,\BF}(-\lambda)\otimes \BF^{\geqq0}_{\lambda}
\qquad(k_{-\lambda}xk_{-\lambda}\mapsto \Phi_{-\lambda}(x)\otimes 1^{\geqq0}_{\lambda})
\]
is an isomorphism of $U_\BF^{\geqq0}$-modules, where
$k_{-2\lambda}U^+_\BF$ is regarded as a $U_\BF^{\leqq0}$-module by the adjoint action.
\item[(ii)]
The linear map
\[
\tilde{U}_\BF^-k_{-2\lambda}\to
\BF^{\leqq0}_{-\lambda}
\otimes
M^*_{-,\BF}(\lambda)
\qquad(k_{-\lambda}yk_{-\lambda}\mapsto1^{\leqq0}_{-\lambda}\otimes\Psi_\lambda(y))
\]
is an isomorphism of $U_\BF^{\leqq0}$-modules, where
$\tilde{U}_\BF^-k_{-2\lambda}$ is regarded as a $U_\BF^{\leqq0}$-module by the adjoint action.
\end{itemize}
\end{lemma}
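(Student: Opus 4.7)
The plan is to verify (i) by direct computation using the formulas in Lemma \ref{lemma:PhiPsi0}; part (ii) is strictly parallel with the roles of $U^+_\BF$ and $\tilde U^-_\BF$ exchanged (using \eqref{eq:dualVerma4:F} and \eqref{eq:dualVerma6:F} in place of \eqref{eq:dualVerma1:F} and \eqref{eq:dualVerma3:F}), so it need not be written separately. I read the source of (i) as the $U^{\geqq0}_\BF$-module given by the adjoint action, consistent with the remark preceding the lemma asserting that $k_{-2\lambda}U^+_\BF$ is $\ad(U^{\geqq0}_\BF)$-stable (the ``$\leqq0$'' in the parenthetical appears to be a typo).

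First I would check bijectivity as $\BF$-vector spaces. For $x\in U^+_{\BF,\gamma}$ one has $k_{-\lambda}xk_{-\lambda}=q^{-(\lambda,\gamma)}k_{-2\lambda}x$, so $x\mapsto k_{-\lambda}xk_{-\lambda}$ is a linear bijection $U^+_\BF\to k_{-2\lambda}U^+_\BF$. Composing with the $\BF$-linear isomorphism $\Phi_{-\lambda}\colon U^+_\BF\to M^*_{+,\BF}(-\lambda)$ and the tautological identification $M^*_{+,\BF}(-\lambda)\otimes\BF^{\geqq0}_\lambda\cong M^*_{+,\BF}(-\lambda)$ gives bijectivity of the map in the statement.

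For $U^0_\BF$-equivariance, let $h\in U^0_\BF$ and $x\in U^+_{\BF,\gamma}$. Since $U^0_\BF$ commutes with $k_{-\lambda}$ and $hxh^{-1}=\chi_\gamma(h)x$, one has $\ad(h)(k_{-\lambda}xk_{-\lambda})=\chi_\gamma(h)\,k_{-\lambda}xk_{-\lambda}$. On the target, \eqref{eq:dualVerma1:F} gives $h\cdot\Phi_{-\lambda}(x)=\chi_{-\lambda+\gamma}(h)\Phi_{-\lambda}(x)$ and $h\cdot 1^{\geqq0}_\lambda=\chi_\lambda(h)1^{\geqq0}_\lambda$; the product of characters is $\chi_\gamma(h)$, matching the source.

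The main step is $U^+_\BF$-equivariance, and it rests on formula \eqref{eq:dualVerma3:F}. The key simplification is that for $u\in U^+_\BF$ and $m\in M^*_{+,\BF}(-\lambda)$,
\[
u\cdot(m\otimes 1^{\geqq0}_\lambda)=(u\cdot m)\otimes 1^{\geqq0}_\lambda,
\]
because $\Delta(U^+_\BF)\subset U^{\geqq0}_\BF\otimes U^+_\BF$, the factor $U^+_\BF$ acts on $1^{\geqq0}_\lambda$ via $\varepsilon$, and the Hopf axiom $(\id\otimes\varepsilon)\circ\Delta=\id$ collapses the coproduct sum. Granted this reduction, the compatibility with the action of $u$ becomes the identity
\[
u\cdot\Phi_{-\lambda}(x)=\Phi_{-\lambda}\bigl(k_\lambda(\ad(u)(k_{-\lambda}xk_{-\lambda}))k_\lambda\bigr),
\]
which is precisely \eqref{eq:dualVerma3:F} with $\lambda$ replaced by $-\lambda$. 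No serious obstacle is expected: the hard work is already packaged in \eqref{eq:dualVerma3:F}, and the only point requiring care is the absorption of the tensor factor $\BF^{\geqq0}_\lambda$, which hinges on the vanishing of $\varepsilon$ on strictly positive weight elements of $U^+_\BF$.
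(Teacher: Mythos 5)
Your proof is correct and follows the paper's intended route: the paper itself gives no explicit argument, stating only ``We see easily from Lemma \ref{lemma:PhiPsi0} the following'', and your write-up supplies precisely the verification that this remark leaves to the reader (bijectivity via the $\Phi_{-\lambda}$ isomorphism, the $U^0_\BF$-weight match, and the collapse of the tensor factor $\BF^{\geqq0}_\lambda$ via $\Delta(U^+_\BF)\subset U^{\geqq0}_\BF\otimes U^+_\BF$ and $\varepsilon$, reducing equivariance to \eqref{eq:dualVerma3:F} with $\lambda$ replaced by $-\lambda$). You are also right that ``$U^{\leqq0}_\BF$-module'' in part (i) of the statement is a typo for ``$U^{\geqq0}_\BF$-module'', as the sentence preceding the lemma confirms.
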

We have an injective $U_\BF$-homomorphism
\begin{equation}
\label{eq:LASTARtoMASTAR:F}
L^*_{\pm,\BF}(\mp\lambda)\to M^*_{\pm,\BF}(\mp\lambda)
\qquad(\lambda\in\Lambda^+).
\end{equation}
induced by the natural homomorphism
$M_{\pm,\BF}(\mp\lambda)\to L_{\pm,\BF}(\mp\lambda)$.
For $\lambda\in\Lambda^+$ we define subspaces 
$U^+_\BF(\lambda)$, $\tilde{U}_\BF^-(\lambda)$ of $U^+_\BF$, $\tilde{U}_\BF^-$ respectively by
\[
U^+_\BF(\lambda)=
\Phi_{-\lambda}^{-1}(L^*_{+,\BF}(-\lambda)),\qquad
\tilde{U}_\BF^-(\lambda)=
\Psi_\lambda^{-1}(L^*_{-,\BF}(\lambda)).
\]
\begin{lemma}
\label{lem:generating}
\begin{itemize}
\item[\rm(i)]
For $\lambda, \mu\in\Lambda^+$ we have
\[
U_\BF^+(\lambda)\subset U_\BF^+(\lambda+\mu),
\qquad
\tilde{U}^-_\BF(\lambda)\subset\tilde{U}^-_\BF(\lambda+\mu).
\]
\item[\rm(ii)]
We have
\[
U^+_\BF=\sum_{\lambda\in\Lambda^+}U^+_\BF(\lambda),
\qquad
\tilde{U}^-_\BF=\sum_{\lambda\in\Lambda^+}\tilde{U}_\BF^-(\lambda).
\]
\end{itemize}
\end{lemma}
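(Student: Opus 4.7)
The plan is to translate both inclusions into concrete vanishing conditions on the Drinfeld pairing and then exploit an elementary monotonicity in $\lambda$ of the defining kernels.

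Under the identification $M_{-,\BF}(\lambda)\cong U^-_\BF$, the kernel of the surjection $M_{-,\BF}(\lambda)\twoheadrightarrow L_{-,\BF}(\lambda)$ is the left ideal
\[
K^-_\lambda:=\sum_{i\in I}U^-_\BF f_i^{(\lambda,\alpha_i^\vee)+1},
\]
and $L^*_{+,\BF}(-\lambda)$ is its annihilator inside $M^*_{+,\BF}(-\lambda)$. Combining this with the defining formula $\langle\Phi_{-\lambda}(x),\overline v\rangle=\tau(x,v)$ and Lemma \ref{lem:Drinfeld paring}(iii) (which ensures that the $k_\delta$-factor hidden in the passage from $U^-_\BF$ to $\tilde U^-_\BF$ contributes only an invertible scalar) yields the explicit characterization
\[
U^+_\BF(\lambda)=\{x\in U^+_\BF\mid\tau(x,w)=0\text{ for every }w\in K^-_\lambda\}.
\]
For $\tilde U^-_\BF(\lambda)$ one uses the presentation $L_{+,\BF}(-\lambda)\cong U^{L,+}_\BF/\sum_i U^{L,+}_\BF e_i^{(\lambda,\alpha_i^\vee)+1}$ together with Lemma \ref{lemma:PhiPsi0}(ii); since $S(U^+_{\BF,\beta})\subset k_{-\beta}U^+_{\BF,\beta}$, the antipode appearing in $\Psi_\lambda$ induces a fixed, $\lambda$-independent weight-preserving automorphism $T$ of $U^+_\BF$, and $\tilde U^-_\BF(\lambda)$ becomes the $\tau$-orthogonal in $\tilde U^-_\BF$ of $T^{-1}(K^+_\lambda)$, where $K^+_\lambda:=\sum_i U^+_\BF e_i^{(\lambda,\alpha_i^\vee)+1}$.

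Part (i) is then immediate: for $\mu\in\Lambda^+$ the inequalities $(\lambda+\mu,\alpha_i^\vee)+1\geq(\lambda,\alpha_i^\vee)+1$ force $f_i^{(\lambda+\mu,\alpha_i^\vee)+1}\in U^-_\BF f_i^{(\lambda,\alpha_i^\vee)+1}$, so $K^-_{\lambda+\mu}\subset K^-_\lambda$ and similarly $K^+_{\lambda+\mu}\subset K^+_\lambda$, whence $T^{-1}(K^+_{\lambda+\mu})\subset T^{-1}(K^+_\lambda)$ (note that $T$ is independent of $\lambda$). A functional vanishing on the larger subspace certainly vanishes on the smaller one, so both inclusions follow.

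For part (ii), by the weight decompositions it suffices to find, for each $\gamma\in Q^+$, some $\lambda\in\Lambda^+$ with $U^+_{\BF,\gamma}\subset U^+_\BF(\lambda)$ and $\tilde U^-_{\BF,-\gamma}\subset\tilde U^-_\BF(\lambda)$. Choose $\lambda$ with $(\lambda,\alpha_i^\vee)+1$ strictly greater than the $\alpha_i$-coefficient of $\gamma$ for every $i\in I$. A nonzero element $uf_i^{(\lambda,\alpha_i^\vee)+1}$ (resp.\ $ue_i^{(\lambda,\alpha_i^\vee)+1}$) of weight $-\gamma$ (resp.\ $\gamma$) would force $u$ to have weight $-\bigl(\gamma-((\lambda,\alpha_i^\vee)+1)\alpha_i\bigr)\notin-Q^+$, which is impossible. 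Hence $K^-_\lambda\cap U^-_{\BF,-\gamma}=\{0\}$ and $K^+_\lambda\cap U^+_{\BF,\gamma}=\{0\}$, and since $T$ preserves weights the same holds for $T^{-1}(K^+_\lambda)$. Combined with Lemma \ref{lem:Drinfeld paring}(iv) (orthogonality of $\tau$ across distinct weight pairs), both defining vanishing conditions become automatic. The main technical point is the $\lambda$-independence of the automorphism $T$ in the first step; once this is in place, (i) and (ii) follow formally.
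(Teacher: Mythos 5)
Your proof is correct and follows essentially the same route as the paper: both describe $U^+_\BF(\lambda)$ (and $\tilde U^-_\BF(\lambda)$) as the $\tau$-orthogonal of a weight-homogeneous left ideal whose defining exponents grow with $\lambda$, then obtain (i) from the resulting monotonicity in $\lambda$ and (ii) from the observation that for $\lambda$ large this ideal avoids a fixed weight space, invoking Lemma~\ref{lem:Drinfeld paring}(iii)--(iv). Your bound on the $\alpha_i$-coefficients of $\gamma$ in place of the paper's bound on $\Ht(\gamma)$ is an immaterial variant of the same weight-localization step, and you simply carry out the $\tilde U^-_\BF$ half explicitly where the paper leaves it to symmetry.
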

\begin{proof}
We will only prove the statements for $U_\BF^+$.
By definition we have
$U^+_\BF(\lambda)=
\{x\in U^+_\BF\mid
\tau(x,I_\lambda)=\{0\}\}$, where
$
I_\lambda=\sum_{i\in I}\tilde{U}^-_\BF
\tilde{f}_i^{((\lambda,\alpha_i^\vee)+1)}.
$
Hence (i) is a consequence of $I_\lambda\supset I_{\lambda+\mu}$ for $\lambda, \mu\in\Lambda^+$.
To show (ii) it is sufficient to show that for any $\beta\in Q^+$ there exists some $\lambda\in\Lambda^+$ such that $U^+_{\BF,\beta}\subset U_\BF^+(\lambda)$.
Set $m=\Ht(\beta)$.
If $\lambda\in\Lambda^+$ satisfies $(\lambda,\alpha_i^\vee)\geqq m$ for any $i\in I$, then we have
$I_\lambda\subset \bigoplus_{\gamma\in Q^+,\Ht(\gamma)>m}
\tilde{U}^-_{\BF,-\gamma}$.
From this we obtain
$\tau(U^+_{\BF,\beta},I_\lambda)=\{0\}$, and hence $U^+_{\BF,\beta}\subset U_\BF^+(\lambda)$.
\end{proof}
\begin{lemma}
\label{lem:lambda-finite}
For $\lambda\in\Lambda^+$ we have
\[
\tilde{U}^-_\BF(\lambda)k_{-2\lambda}\subset U_{\BF,f},\qquad
k_{-2\lambda}{U}^+_\BF(\lambda)\subset U_{\BF,f}
\]
\end{lemma}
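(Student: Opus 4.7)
The plan is to realize both $k_{-2\lambda}U_\BF^+(\lambda)$ and $\tilde{U}^-_\BF(\lambda)k_{-2\lambda}$ as subsets of the $\ad$-orbit $\ad(U_\BF)(k_{-2\lambda})$, which by \eqref{eq:Uf} is a finite-dimensional subspace of $U_{\BF,f}$ (since $k_{-2\lambda}\in U_{\BF,f}$ and $U_{\BF,f}$ is $\ad(U_\BF)$-stable). Once this containment is established the lemma follows immediately.

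For the first inclusion I would invoke Lemma \ref{lem:PhiPsiisom}(i), understood as stating that $k_{-2\lambda}U_\BF^+$ and $M^*_{+,\BF}(-\lambda)\otimes\BF^{\geqq 0}_\lambda$ are isomorphic as $U_\BF^{\geqq 0}$-modules with the adjoint $U_\BF^{\geqq 0}$-action on the source (the compatibility being verified directly from \eqref{eq:dualVerma1:F} and \eqref{eq:dualVerma3:F} of Lemma \ref{lemma:PhiPsi0}). By the definition $U_\BF^+(\lambda)=\Phi_{-\lambda}^{-1}(L^*_{+,\BF}(-\lambda))$ the subspace $k_{-2\lambda}U_\BF^+(\lambda)$ is sent to $L^*_{+,\BF}(-\lambda)\otimes 1^{\geqq 0}_\lambda$, and in particular $k_{-2\lambda}\mapsto\Phi_{-\lambda}(1)\otimes 1^{\geqq 0}_\lambda$, where $\Phi_{-\lambda}(1)$ is the lowest weight vector of $M^*_{+,\BF}(-\lambda)$ and of its simple submodule $L^*_{+,\BF}(-\lambda)$. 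The key step is the equality $\ad(U_\BF^{\geqq 0})(k_{-2\lambda})=k_{-2\lambda}U_\BF^+(\lambda)$, which through the isomorphism becomes $U_\BF^{\geqq 0}\cdot(\Phi_{-\lambda}(1)\otimes 1^{\geqq 0}_\lambda)=L^*_{+,\BF}(-\lambda)\otimes 1^{\geqq 0}_\lambda$. Since $e_i$ annihilates $1^{\geqq 0}_\lambda$ and $k_\mu$ acts on it by a scalar, the tensor-product action collapses to $(U_\BF^+\cdot\Phi_{-\lambda}(1))\otimes 1^{\geqq 0}_\lambda$; combining $L^*_{+,\BF}(-\lambda)\cong L_{+,\BF}(-\lambda)$ with Lusztig's description $L_{+,\BF}(-\lambda)\cong U_\BF^{L,+}/\sum_{i\in I}U_\BF^{L,+}e_i^{((\lambda,\alpha_i^\vee)+1)}$ shows that $U_\BF^+$ generates $L^*_{+,\BF}(-\lambda)$ from its lowest weight vector, so $U_\BF^+\cdot\Phi_{-\lambda}(1)=L^*_{+,\BF}(-\lambda)$ and hence $k_{-2\lambda}U_\BF^+(\lambda)\subset\ad(U_\BF)(k_{-2\lambda})\subset U_{\BF,f}$.

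The second inclusion is entirely symmetric: using Lemma \ref{lem:PhiPsiisom}(ii) in place of (i) together with the parallel orbit computation based on $f_i\cdot 1^{\leqq 0}_{-\lambda}=0$ and the realization $L_{-,\BF}(\lambda)\cong\tilde{U}_\BF^{L,-}/\sum_{i\in I}\tilde{U}_\BF^{L,-}\tilde{f}_i^{((\lambda,\alpha_i^\vee)+1)}$, one obtains $\ad(U_\BF^{\leqq 0})(k_{-2\lambda})=\tilde{U}^-_\BF(\lambda)k_{-2\lambda}\subset\ad(U_\BF)(k_{-2\lambda})\subset U_{\BF,f}$. The main potential obstacle is the verification of the adjoint-action compatibility asserted inside Lemma \ref{lem:PhiPsiisom}; once that is in hand, the remainder reduces to the standard fact that a simple extremal-weight module is generated from its extremal vector by the appropriate nilpotent half of $U_\BF$.
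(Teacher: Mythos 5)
Your proposal reproduces the paper's own argument: you invoke Lemma \ref{lem:PhiPsiisom} to identify $k_{-2\lambda}U^+_\BF$ with $M^*_{+,\BF}(-\lambda)\otimes\BF^{\geqq0}_\lambda$ as $U_\BF^{\geqq0}$-modules under the adjoint action, observe that $L^*_{+,\BF}(-\lambda)\otimes\BF^{\geqq0}_\lambda$ is generated by the image of $k_{-2\lambda}$, and conclude $k_{-2\lambda}U^+_\BF(\lambda)=\ad(U_\BF^{\geqq0})(k_{-2\lambda})\subset U_{\BF,f}$ via \eqref{eq:Uf}. The only difference is that you spell out the cyclicity step (the tensor action collapsing and Lusztig's presentation of $L_{+,\BF}(-\lambda)$), which the paper leaves implicit; this is a harmless elaboration, not a different route.
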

\begin{proof}
By Lemma \ref{lem:PhiPsiisom} 
we have an isomorphism 
\[
k_{-2\lambda}U^+_\BF(\lambda)\to
L^*_{+,\BF}(-\lambda)\otimes \BF^{\geqq0}_{\lambda}
\qquad(k_{-\lambda}xk_{-\lambda}\mapsto \Phi_{-\lambda}(x)\otimes 1^{\geqq0}_{\lambda})
\]
of $U_\BF^{\geqq0}$-modules.
We have $L^*_{+,\BF}(-\lambda)\cong L_{+,\BF}(-\lambda)$ and hence 
$L^*_{+,\BF}(-\lambda)\otimes \BF^{\geqq0}_{\lambda}$ is generated by $\Phi_{-\lambda}(1)\otimes1^{\geqq0}_{\lambda}$ as a $U_\BF^{\geqq0}$-module.
It follows that
\[
k_{-2\lambda}U^+_\BF(\lambda)
=\ad(U_\BF^{\geqq0})(k_{-2\lambda})\subset U_{\BF,f}
\]
by \eqref{eq:Uf}.
The proof of $\tilde{U}^-_\BF(\lambda)k_{-2\lambda}\subset U_{\BF,f}$ is similar.
\end{proof}

\subsection{}
It is well-known that for $\lambda, \mu\in\Lambda$ such that $\lambda\ne\mu$ there exists
$h\in U^{L,0}_\BA$ such that
$\chi_\lambda(h)=1$ and $\chi_\mu(h)=0$.
In particular, we have
$\chi_\lambda\ne\chi_\mu$ (see for example \cite[Lemma 2.3]{TI}).

For $M\in\Mod(U^L_\BA)$ and $\lambda\in\Lambda$ we set
\[
M_\lambda=\{m\in M\mid
hm=\chi_\lambda(h)m\quad(h\in U^{L,0}_\BA)\}.
\]
For $\lambda\in\Lambda$ we define 
$M_{+,\BA}(\lambda), M_{-,\BA}(\lambda)\in\Mod(U^L_\BA)$
by
\begin{align*}
M_{+,\BA}(\lambda)
=&U^L_\BA/
\sum_{y\in U_\BA^{L,-}}U^L_\BA(y-\varepsilon(y))+
\sum_{h\in U_\BA^{L,0}}U^L_\BA(h-\chi_\lambda(h)),\\
M_{-,\BA}(\lambda)
=&U^L_\BA/
\sum_{x\in U_\BA^{L,+}}U^L_\BA(x-\varepsilon(x))+
\sum_{h\in U_\BA^{L,0}}U^L_\BA(h-\chi_\lambda(h)).
\end{align*}
By the triangular decomposition we have isomorphisms
\[
M_{+,\BA}(\lambda)
\cong
U_\BA^{L,+}\quad
(\overline{u}\leftrightarrow u),\qquad
M_{-,\BA}(\lambda)
\cong
U_\BA^{L,-}\quad
(\overline{u}\leftrightarrow u)
\]
of $\BA$-modules.
In particular, $M_{\pm,\BA}(\lambda)$ is a free $\BA$-module and we have $\BF\otimes_\BA M_{\pm,\BA}(\lambda)\cong M_{\pm,\BF}(\lambda)$.
Moreover, we have weight space decompositions
\[
M_{+,\BA}(\lambda)
=\bigoplus_{\mu\in\lambda+Q^+}M_{+,\BA}(\lambda)_\mu,\qquad
M_{-,\BA}(\lambda)
=\bigoplus_{\mu\in\lambda-Q^+}M_{-,\BA}(\lambda)_\mu.
\]

For $\lambda\in\Lambda^+$ we define 
$L_{+,\BA}(-\lambda)\in\Mod(U^L_\BA)$ (resp. $L_{-,\BA}(\lambda)\in\Mod(U^L_\BA)$)
to be the $U^L_\BA$-submodule of
$L_{+,\BF}(-\lambda)$ (resp. $L_{-,\BF}(\lambda)$) 
generated by $\overline{1}\in L_{+,\BF}(-\lambda)$ (resp. $\overline{1}\in L_{-,\BF}(\lambda)$).
By definition $L_{\pm,\BA}(\mp\lambda)$ is a free $\BA$-module and we have $\BF\otimes_\BA L_{\pm,\BA}(\mp\lambda)\cong L_{\pm,\BF}(\mp\lambda)$.
Moreover, we have weight space decompositions
\[
L_{+,\BA}(-\lambda)
=\bigoplus_{\mu\in-\lambda+Q^+}L_{+,\BA}(-\lambda)_\mu,\qquad
L_{-,\BA}(\lambda)
=\bigoplus_{\mu\in\lambda-Q^+}L_{-,\BA}(\lambda)_\mu.
\]
The canonical surjective $U_\BF$-homomorphism $M_{\pm,\BF}(\mp\lambda)\to L_{\pm,\BF}(\mp\lambda)$ induces a surjective $U_\BA^L$-homomorphism 
\begin{equation}
\label{eq:MAtoLA}
M_{\pm,\BA}(\mp\lambda)\to L_{\pm,\BA}(\mp\lambda)
\qquad(\lambda\in\Lambda^+).
\end{equation}
Note that \eqref{eq:MAtoLA} a split epimorphism of $\BA$-modules since $\BA$ is PID, and $M_{\pm,\BA}(\mp\lambda)_\mu$, $L_{\pm,\BA}(\mp\lambda)_\mu$ are torsion free finitely generated $\BA$-modules for each $\mu\in\Lambda$.

Let $M$ be a $U^L_\BA$-module with weight space decomposition
$M=\bigoplus_{\mu\in\Lambda}M_\mu$ such that $M_\mu$ is a free $\BA$-module of finite rank for any $\mu\in\Lambda$.
We define a  $U^L_\BA$-module $M^\bigstar$ by
\[
M^\bigstar=\bigoplus_{\mu\in\Lambda}
\Hom_\BA(M_\mu,\BA)\subset \Hom_\BA(M,\BA),
\]
where the action of $U^L_\BA$ is given by
\[
\langle um^*,m\rangle=\langle m^*,(Su)m\rangle
\qquad(u\in U^L_\BA, m^*\in M^\bigstar, m\in M).
\]
Here $\langle\,,\,\rangle:M^\bigstar\times M\to\BA$ is the natural pairing.

We set
\begin{align*}
M^*_{\pm,\BA}(\lambda)&=(M_{\mp,\BA}(-\lambda))^\bigstar\qquad
(\lambda\in\Lambda),\\
L^*_{\pm,\BA}(\mp\lambda)&=(L_{\mp,\BA}(\pm\lambda))^\bigstar\qquad
(\lambda\in\Lambda^+).
\end{align*}
Then $M^*_{\pm,\BA}(\lambda)$ for $\lambda\in\Lambda$ and $L^*_{\pm,\BA}(\mp\lambda)$ for $\lambda\in\Lambda^+$ are free $\BA$-modules satisfying 
\[
\BF\otimes_\BA M^*_{\pm,\BA}(\lambda)
\cong
M^*_{\pm,\BF}(\lambda),\qquad
\BF\otimes_\BA L^*_{\pm,\BA}(\mp\lambda)
\cong
L^*_{\pm,\BF}(\mp\lambda).
\]
Moreover, we can identify $M^*_{\pm,\BA}(\lambda)$ and $L^*_{\pm,\BA}(\mp\lambda)$ with $\BA$-submodules of 
$M^*_{\pm,\BF}(\lambda)$ and $L^*_{\pm,\BF}(\mp\lambda)$ respectively.
Under this identification we have 
\begin{equation}
\label{eq:MLAF}
L^*_{\pm,\BA}(\mp\lambda)=
L^*_{\pm,\BF}(\mp\lambda)\cap M^*_{\pm,\BA}(\mp\lambda)\qquad(\lambda\in\Lambda^+).
\end{equation}
In particular, the $U_\BA^L$-homomorphism
\begin{equation}
\label{eq:LASTARtoMASTAR}
L^*_{\pm,\BA}(\mp\lambda)\to M^*_{\pm,\BA}(\mp\lambda)
\qquad(\lambda\in\Lambda^+).
\end{equation}
is a split monomorphism of $\BA$-modules.

By abuse of notation we write
\begin{equation}
\label{eq:PhiPsi:A}
\Phi_\lambda:{U}^{+}_\BA\to M^*_{+,\BA}(\lambda),\qquad
\Psi_\lambda:\tilde{U}^{-}_\BA\to M^*_{-,\BA}(\lambda)
\end{equation}
the isomorphisms of $\BA$-modules induced by \eqref{eq:PhiPsi}.
By Lemma \ref{lemma:PhiPsi0} we have the following.
\begin{lemma}
\label{lemma:PhiPsi1}
\begin{itemize}
\item[\rm(i)]
The $U^L_\BA$-module structure of $M^*_{+,\BA}(\lambda)$ is given by
\begin{align}
\label{eq:dualVerma1:A}
h\cdot\Phi_\lambda(x)&=
\chi_{\lambda+\gamma}(h)
\Phi_\lambda(x)\quad
(x\in U^+_{\BA,\gamma}, h\in U^{L,0}_\BA),\\
\label{eq:dualVerma2:A}
v\cdot\Phi_\lambda(x)&=
\sum_{(x)}\tau_\BA^L(x_{(0)},Sv)\Phi_\lambda(x_{(1)})
\quad
(x\in U^+_\BA, v\in U^{L,-}_\BA),\\
\label{eq:dualVerma3:A}
u\cdot\Phi_\lambda(x)&=
\Phi_\lambda(
k_{-\lambda}(\ad(u)(k_\lambda xk_\lambda))k_{-\lambda})
\quad
(x\in U^+_\BA,\,\,u\in U_\BA^{L,+}).
\end{align}
\item[\rm(ii)]
The $U^L_\BA$-module structure of $M^*_{-,\BA}(\lambda)$ is given by
\begin{align}
\label{eq:dualVerma4:A}
&h\cdot\Psi_\lambda(y)=
\chi_{\lambda-\gamma}(h)
\Psi_\lambda(y)\quad(y\in \tilde{U}^-_{\BA,-\gamma}, h\in U^{L,0}_\BA),\\
\label{eq:dualVerma5:A}
&u\cdot\Psi_\lambda(y)=
\sum_{(y)}{}^L\tau_\BA(u,y_{(0)})\Psi_\lambda(y_{(1)})
\quad(y\in \tilde{U}^-_\BA, u\in U^{L,+}_\BA),\\
\label{eq:dualVerma6:A}
&v\cdot\Psi_\lambda(y)=
\Psi_\lambda(
k_\lambda(\ad(v)(k_{-\lambda} yk_{-\lambda}))k_\lambda)
\quad
(y\in \tilde{U}^-_\BA,\,\,v\in U^{L,-}_{\BA}).
\end{align}
\end{itemize}
\end{lemma}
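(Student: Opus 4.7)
The plan is to deduce Lemma \ref{lemma:PhiPsi1} from its $\BF$-counterpart Lemma \ref{lemma:PhiPsi0} by restriction to the $\BA$-form. Since each $M^*_{\pm,\BA}(\lambda)$ is realized as a $U_\BA^L$-stable $\BA$-submodule of $M^*_{\pm,\BF}(\lambda)$ (the weight spaces are free $\BA$-modules because $M_{\mp,\BA}(-\lambda)\cong U^{L,\mp}_\BA$ is $\BA$-free by the triangular decomposition), and since $\Phi_\lambda$ and $\Psi_\lambda$ in \eqref{eq:PhiPsi:A} are by definition restrictions of their $\BF$-versions, each of the six identities \eqref{eq:dualVerma1:A}--\eqref{eq:dualVerma6:A} holds a priori inside $M^*_{\pm,\BF}(\lambda)$. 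What remains is only to verify that every quantity on the right-hand side actually lies in the $\BA$-form, so that the equation makes sense inside $M^*_{\pm,\BA}(\lambda)$.

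For the weight-action formulas \eqref{eq:dualVerma1:A} and \eqref{eq:dualVerma4:A}, the point is that $\chi_{\lambda\pm\gamma}(h)\in\BA$ for $h\in U_\BA^{L,0}$, which is standard for the Lusztig form. For the formulas \eqref{eq:dualVerma2:A} and \eqref{eq:dualVerma5:A}, I would use $\Delta(U^+_\BA)\subset U^+_\BA\otimes U^{\geqq 0}_\BA$ and $\Delta(\tilde U^-_\BA)\subset \tilde U^-_\BA\otimes U^{\leqq 0}_\BA$ together with the integrality statement \eqref{eq:Drinfeld-paringA} (using Lemma \ref{lem:Drinfeld paring}(iii) to strip off $k_\gamma$-factors when needed) to conclude that the scalars $\tau^L_\BA(x_{(0)},Sv)$ and ${}^L\tau_\BA(u,y_{(0)})$ appearing in the expansions lie in $\BA$; then both sides are clearly elements of $M^*_{\pm,\BA}(\lambda)$. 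For the formulas \eqref{eq:dualVerma3:A} and \eqref{eq:dualVerma6:A}, the essential input is $\ad(U_\BA^L)(U_\BA)\subset U_\BA$, which is exactly Lemma \ref{lem:ad}; combined with $k_{\pm\lambda}\in U_\BA^0\subset U_\BA$, this gives $k_{-\lambda}(\ad(u)(k_\lambda xk_\lambda))k_{-\lambda}\in U_\BA^+$ and analogously for the negative side (one then reads these off as elements of $M^*_{\pm,\BA}(\lambda)$ via $\Phi_\lambda$, $\Psi_\lambda$).

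In truth, no step here represents a genuine obstacle: the lemma is a purely formal descent of Lemma \ref{lemma:PhiPsi0} to the $\BA$-form, and its entire content is the bookkeeping of integrality facts already established, namely Lemma \ref{lem:ad} together with \eqref{eq:Drinfeld-paringA}. The mildly delicate point, if any, is the verification of integrality of the Drinfeld pairing values in \eqref{eq:dualVerma2:A} and \eqref{eq:dualVerma5:A}: one must note that although $x\in U^+_\BA$ (not a priori in $U_\BA^{L,+}$), the pairing $\tau^L_\BA$ is defined on $U_\BA^{\geqq 0}\times U_\BA^{L,\leqq 0}$ and so accepts $x_{(0)}\in U^{\geqq 0}_\BA$ against $Sv\in U^{L,\leqq 0}_\BA$, and dually for ${}^L\tau_\BA$; this is the only place one has to match up the two distinct integral Drinfeld pairings in \eqref{eq:Drinfeld-paringA} with the correct factor.
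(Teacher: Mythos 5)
Your proposal is correct and takes exactly the route the paper does: Lemma~\ref{lemma:PhiPsi1} is stated there with the single-line justification ``By Lemma~\ref{lemma:PhiPsi0} we have the following,'' i.e.\ it is descent of the $\BF$-statement to the $\BA$-form, with the integrality inputs being precisely Lemma~\ref{lem:ad} and the integral Drinfeld pairings \eqref{eq:Drinfeld-paringA}. One small slip: the coproduct containments in your middle paragraph should read $\Delta(U^+_\BA)\subset U^{\geqq 0}_\BA\otimes U^+_\BA$ and $\Delta(\tilde U^-_\BA)\subset U^{\leqq 0}_\BA\otimes\tilde U^-_\BA$ (factors reversed from what you wrote), which is in fact what you use correctly in your final paragraph when placing $x_{(0)}\in U_\BA^{\geqq 0}$ and $x_{(1)}\in U_\BA^{+}$.
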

For $\lambda\in\Lambda^+$ we define $\BA$-submodules 
$U^+_\BA(\lambda)$, $\tilde{U}_\BA^-(\lambda)$ of $U^+_\BA$, $\tilde{U}_\BA^-$ respectively by
\[
U^+_\BA(\lambda)=
\Phi_{-\lambda}^{-1}(L^*_{+,\BA}(-\lambda)),\qquad
\tilde{U}_\BA^-(\lambda)=
\Psi_\lambda^{-1}(L^*_{-,\BA}(\lambda)).
\]
The embeddings
\begin{equation}
\label{eq:Upmlambda}
U^+_\BA(\lambda)\hookrightarrow  U^+_\BA,\qquad
\tilde{U}_\BA^-(\lambda)\hookrightarrow\tilde{U}_\BA^-
\qquad(\lambda\in\Lambda^+)
\end{equation}
are split monomorphisms of $\BA$-modules.
By \eqref{eq:MLAF} we have
\begin{equation}
\label{eq:MLAF2}
U_\BA^+(\lambda)
=U_\BF^+(\lambda)
\cap U^+_\BA,\qquad
\tilde{U}_\BA^-(\lambda)
=\tilde{U}_\BF^-(\lambda)
\cap \tilde{U}^-_\BA
\qquad(\lambda\in\Lambda^+).
\end{equation}
In particular, we have
\begin{align}
\label{eq:generating:A:1}
&U_\BA^+(\lambda)\subset U_\BA^+(\lambda+\mu),
\quad\tilde{U}^-_\BA(\lambda)\subset\tilde{U}^-_\BA(\lambda+\mu)
&(\lambda, \mu\in\Lambda^+),\\
\label{eq:generating:A:2}
&U^+_\BA=\sum_{\lambda\in\Lambda^+}U^+_\BA(\lambda),
\quad
\tilde{U}^-_\BA=\sum_{\lambda\in\Lambda^+}\tilde{U}_\BA^-(\lambda),\\
&
\tilde{U}^-_\BA(\lambda)k_{-2\lambda}\subset U_{\BA,f},\quad
k_{-2\lambda}{U}^+_\BA(\lambda)\subset U_{\BA,f}&(\lambda\in\Lambda^+)
\label{eq:generating:A:3}
\end{align}
by Lemma \ref{lem:generating} and Lemma \ref{lem:lambda-finite}.

\subsection{}
Let $\lambda\in\Lambda$.
By abuse of notation 
we also denote by $\chi_\lambda:U^{L,0}_\zeta\to\BC$ the $\BC$-algebra homomorphism induced by $\chi_\lambda:U^{L,0}_\BA\to\BA$.
Then 
$\{\chi_\lambda\}_{\lambda\in\Lambda}$ is a linearly independent subset of the $\BC$-module $\Hom_\BC(U^{L,0}_\zeta,\BC)$.
For $M\in\Mod(U^L_\zeta)$ and $\lambda\in\Lambda$ we set
\[
M_\lambda=\{m\in M\mid hm=\chi_\lambda(h)m\,\,(h\in U^{L,0}_\zeta)\}.
\]

For $\lambda\in\Lambda$ we set
\[
M_{\pm,\zeta}(\lambda)=\BC\otimes_\BA M_{\pm,\BA}(\lambda),
\qquad
M^*_{\pm,\zeta}(\lambda)=\BC\otimes_\BA M^*_{\pm,\BA}(\lambda).
\]
For $\lambda\in\Lambda^+$ we set
\[
L_{\pm,\zeta}(\mp\lambda)=\BC\otimes_\BA L_{\pm,\BA}(\mp\lambda),
\qquad
L^*_{\pm,\zeta}(\mp\lambda)=\BC\otimes_\BA L^*_{\pm,\BA}(\mp\lambda).
\]
We have canonical $U^L_\zeta$-homomorphisms
\begin{align}
\label{eq:MtoL}
M_{\pm,\zeta}(\mp\lambda)\to L_{\pm,\zeta}(\mp\lambda)
\qquad(\lambda\in\Lambda^+),
\\
\label{eq:LSTARtoMSTAR}
L^*_{\pm,\zeta}(\mp\lambda)\to M^*_{\pm,\zeta}(\mp\lambda)
\qquad(\lambda\in\Lambda^+).
\end{align}
Note that \eqref{eq:MtoL} is surjective, and \eqref{eq:LSTARtoMSTAR} is injective.

For any $\lambda\in\Lambda^+$ we have an isomorphism
\begin{equation}
\label{eq:AcongL}
A_\zeta(\lambda)\cong L^*_{-,\zeta}(\lambda)
\end{equation}
of $U_\zeta^L$-modules (see, for example, \cite{TI}).

Let $\lambda\in\Lambda$.
By abuse of notation we also denote by
\[
\Phi_\lambda:{U}^{+}_\zeta\to M^*_{+,\zeta}(\lambda),\qquad
\Psi_\lambda:\tilde{U}^{-}_\zeta\to M^*_{-,\zeta}(\lambda)
\]
the isomorphisms of $\BC$-modules given by
\begin{align*}
&\langle\Phi_\lambda(x),\overline{v}\rangle
=\tau^L_\zeta(x, v)\qquad
(x\in U_\zeta^{+}, v\in \tilde{U}_\zeta^{L,-}),\\
&\langle\Psi_\lambda(y),\overline{Su}\rangle
={}^L\tau_\zeta(u, y)\qquad
(y\in \tilde{U}_\zeta^{-}, u\in {U}_\zeta^{L,+}).
\end{align*}
By Lemma \ref{lemma:PhiPsi1} we have the following.
\
\begin{lemma}
\label{lemma:PhiPsi2}
\begin{itemize}
\item[\rm(i)]
The $U^L_\zeta$-module structure of $M^*_{+,\zeta}(\lambda)$ is given by
\begin{align}
\label{eq:dualVerma1}
h\cdot\Phi_\lambda(x)&=
\chi_{\lambda+\gamma}(h)
\Phi_\lambda(x)\quad
(x\in U^+_{\zeta,\gamma}, h\in U^{L,0}_\zeta),\\
\label{eq:dualVerma2}
v\cdot\Phi_\lambda(x)&=
\sum_{(x)}\tau^L_\zeta(x_{(0)},Sv)\Phi_\lambda(x_{(1)})
\quad
(x\in U^+_\zeta, v\in U^{L,-}_\zeta),\\
\label{eq:dualVerma3}
u\cdot\Phi_\lambda(x)&=
\Phi_\lambda(
k_{-\lambda}(\ad(u)(k_\lambda xk_\lambda))k_{-\lambda})
\quad
(x\in U^+_\zeta,\,\,u\in U_\zeta^{L,+}).
\end{align}
\item[\rm(ii)]
The $U^L_\zeta$-module structure of $M^*_{-,\zeta}(\lambda)$ is given by
\begin{align}
\label{eq:dualVerma4}
&h\cdot\Psi_\lambda(y)=
\chi_{\lambda-\gamma}(h)
\Psi_\lambda(y)\quad(y\in \tilde{U}^-_{\zeta,-\gamma}, h\in U^{L,0}_\zeta),\\
\label{eq:dualVerma5}
&u\cdot\Psi_\lambda(y)=
\sum_{(y)}{}^L\tau_\zeta(u,y_{(0)})\Psi_\lambda(y_{(1)})
\quad(y\in \tilde{U}^-_\zeta, u\in U^{L,+}_\zeta),\\
\label{eq:dualVerma6}
&v\cdot\Psi_\lambda(y)=
\Psi_\lambda(
k_\lambda(\ad(v)(k_{-\lambda} yk_{-\lambda}))k_\lambda)
\quad
(y\in \tilde{U}^-_\zeta,\,\,v\in U^{L,-}_{\zeta}).
\end{align}
\end{itemize}
\end{lemma}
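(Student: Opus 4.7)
The plan is to derive this lemma by specialization from Lemma \ref{lemma:PhiPsi1}, in exactly the same way that Lemma \ref{lemma:PhiPsi1} is itself deduced from Lemma \ref{lemma:PhiPsi0}. By construction, $M^*_{\pm,\zeta}(\lambda) = \BC \otimes_\BA M^*_{\pm,\BA}(\lambda)$ and the $U^L_\zeta$-module structure on $M^*_{\pm,\zeta}(\lambda)$ is obtained from the $U^L_\BA$-module structure on $M^*_{\pm,\BA}(\lambda)$ by base change. Similarly, $U^+_\zeta = \BC \otimes_\BA U^+_\BA$ and $\tilde{U}^-_\zeta = \BC \otimes_\BA \tilde{U}^-_\BA$, and the $\BC$-linear isomorphisms $\Phi_\lambda$, $\Psi_\lambda$ over $\zeta$ are by definition $\BC \otimes_\BA$ of the corresponding $\BA$-linear isomorphisms in \eqref{eq:PhiPsi:A}.

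First I would check that each ingredient appearing in the formulas \eqref{eq:dualVerma1:A}--\eqref{eq:dualVerma6:A} specializes correctly. The characters $\chi_\mu : U^{L,0}_\BA \to \BA$ descend to $\chi_\mu : U^{L,0}_\zeta \to \BC$ by the definition recalled in Section~3 (and the surrounding discussion of the linear independence of $\{\chi_\mu\}$). The Drinfeld pairings $\tau^L_\BA$ and ${}^L\tau_\BA$ in \eqref{eq:Drinfeld-paringA} specialize to $\tau^L_\zeta$ and ${}^L\tau_\zeta$ in \eqref{eq:Drinfeld-paring-zeta}. The comultiplication on $U^+_\BA$ and $\tilde{U}^-_\BA$ is a Hopf-algebra structure over $\BA$, hence descends to $\zeta$. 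Finally, Lemma \ref{lem:ad} ensures $\ad(U^L_\BA)(U_\BA) \subset U_\BA$, so the adjoint action on the right-hand side of \eqref{eq:dualVerma3:A} and \eqref{eq:dualVerma6:A} makes sense over $\BA$ and specializes to the corresponding $\ad$ on $U_\zeta$.

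Then the proof is essentially automatic: apply $\BC \otimes_\BA -$ to each of the six identities \eqref{eq:dualVerma1:A}--\eqref{eq:dualVerma6:A}. On the left-hand side the $U^L_\BA$-action on $M^*_{\pm,\BA}(\lambda)$ becomes the $U^L_\zeta$-action on $M^*_{\pm,\zeta}(\lambda)$ composed with $\Phi_\lambda$ or $\Psi_\lambda$, while on the right-hand side the characters, the pairings, the comultiplication and the $\ad$-action all become their $\zeta$-counterparts as noted above. Since $U^+_\BA$ (resp.\ $\tilde{U}^-_\BA$) surjects onto $U^+_\zeta$ (resp.\ $\tilde{U}^-_\zeta$) and analogously for $U^{L,\pm}_\BA$, $U^{L,0}_\BA$, the identities hold for a spanning set of the relevant $\zeta$-modules, and by $\BC$-linearity they hold in general.

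There is no substantive obstacle here; the only mild subtlety is bookkeeping, namely confirming that the ``abuse of notation'' for $\Phi_\lambda$, $\Psi_\lambda$, $\chi_\lambda$, $\tau^L$ and ${}^L\tau$ at each of the three levels $\BF$, $\BA$, $\zeta$ is consistent, so that the $\BA$-identities of Lemma \ref{lemma:PhiPsi1} can be tensored with $\BC$ without ambiguity.
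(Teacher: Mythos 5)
Your proposal is correct and follows exactly the argument the paper uses: the paper's proof of this lemma is just the statement ``By Lemma \ref{lemma:PhiPsi1} we have the following,'' i.e.\ base change $\BC\otimes_\BA(-)$ applied to the $\BA$-level identities. You have simply spelled out the bookkeeping that the paper leaves implicit.
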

For $\lambda\in\Lambda^+$ 
we set
\[
U^+_\zeta(\lambda)
=\BC\otimes_\BA U^+_\BA(\lambda),\quad
\tilde{U}^-_\zeta(\lambda)
=\BC\otimes_\BA \tilde{U}^-_\BA(\lambda).
\]
Then 
$U^+_\zeta(\lambda)$ and $\tilde{U}_\zeta^-(\lambda)$ are the  $\BC$-submodules of $U^+_\zeta$ and $\tilde{U}_\zeta^-$ respectively satisfying 
$\Phi_{-\lambda}(U^+_\zeta(\lambda))=L^*_{+,\zeta}(-\lambda)$ and
$\Psi_\lambda(\tilde{U}^-_\zeta(\lambda))=L^*_{-,\zeta}(\lambda)$.
We have linear isomorphisms
\begin{equation}
\label{eq:PhiPsiLzeta}
\Phi_{-\lambda}:{U}^{+}_\zeta(\lambda)\to L^*_{+,\zeta}(-\lambda),\quad
\Psi_\lambda:\tilde{U}^{-}_\zeta(\lambda)\to L^*_{-,\zeta}(\lambda)
\quad(\lambda\in\Lambda^+).
\end{equation}
By \eqref{eq:generating:A:1}, \eqref{eq:generating:A:2}, \eqref{eq:generating:A:3} we have
\begin{align}
\label{eq:generating:zeta:1}
&U_\zeta^+(\lambda)\subset U_\zeta^+(\lambda+\mu),
\quad\tilde{U}^-_\zeta(\lambda)\subset\tilde{U}^-_\zeta(\lambda+\mu)
&(\lambda, \mu\in\Lambda^+),\\
\label{eq:generating:zeta:2}
&U^+_\zeta=\sum_{\lambda\in\Lambda^+}U^+_\zeta(\lambda),
\quad
\tilde{U}^-_\zeta=\sum_{\lambda\in\Lambda^+}\tilde{U}_\zeta^-(\lambda),\\
&
\label{eq:generating:zeta:3}
\tilde{U}^-_\zeta(\lambda)k_{-2\lambda}\subset U_{\zeta,f},\quad
k_{-2\lambda}{U}^+_\zeta(\lambda)\subset U_{\BA,f}&(\lambda\in\Lambda^+).
\end{align}
By \eqref{eq:generating:zeta:2}, \eqref{eq:generating:zeta:3} we see easily the following.
\begin{lemma}
\label{lem:localization}
For any $u\in U_\zeta$ there exists some $\lambda\in\Lambda^+$ such that 
$uk_{-2\lambda}\in U_{\zeta, f}$.
\end{lemma}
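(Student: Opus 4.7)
The plan is to combine triangular decomposition with successive clearings via the two key facts \eqref{eq:generating:zeta:2} and \eqref{eq:generating:zeta:3}. First observe that the property in the lemma is stable under sums: if $u_ik_{-2\lambda_i}\in U_{\zeta,f}$ for finitely many $i$, then choosing $\lambda\in\Lambda^+$ with $\lambda-\lambda_i\in\Lambda^+$ for every $i$ gives $u_ik_{-2\lambda}=(u_ik_{-2\lambda_i})\cdot k_{-2(\lambda-\lambda_i)}$, a product of elements of the subalgebra $U_{\zeta,f}$. So by the triangular decomposition $U_\zeta=\tilde U^-_\zeta\cdot U^0_\zeta\cdot U^+_\zeta$ it suffices to treat a single monomial $u=\tilde y\,k_\alpha\,x$ with $\tilde y\in\tilde U^-_{\zeta,-\gamma}$, $\alpha\in\Lambda$, and $x\in U^+_{\zeta,\beta}$.

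Next I would apply \eqref{eq:generating:zeta:2} to pick $\lambda_1,\lambda_2\in\Lambda^+$ with $\tilde y\in\tilde U^-_\zeta(\lambda_1)$ and $x\in U^+_\zeta(\lambda_2)$; then \eqref{eq:generating:zeta:3} gives $A:=\tilde y\,k_{-2\lambda_1}\in U_{\zeta,f}$ and $B:=k_{-2\lambda_2}\,x\in U_{\zeta,f}$. For arbitrary $\lambda\in\Lambda^+$, the commutativity of $U^0_\zeta$ together with $x\,k_{-2\lambda}=q^{-2(\lambda,\beta)}k_{-2\lambda}\,x$ gives
\[
uk_{-2\lambda}\;=\;q^{-2(\lambda,\beta)}\,\tilde y\,k_{\alpha-2\lambda}\,x\;=\;q^{-2(\lambda,\beta)}\,A\cdot k_\nu\cdot B,\qquad \nu:=\alpha+2\lambda_1+2\lambda_2-2\lambda.
\]
Since $U_{\zeta,f}$ is a subalgebra of $U_\zeta$, the task reduces to choosing $\lambda$ so that $k_\nu\in U_{\zeta,f}$.

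To arrange this, I would exploit $\gcd(\ell,2)=1$ (since $\ell$ is odd) to decompose $\alpha=2\beta'+\ell\tau$ with $\beta',\tau\in\Lambda$, giving $k_\nu=k_{2(\beta'+\lambda_1+\lambda_2-\lambda)}\cdot k_{\ell\tau}$. Taking $\lambda\in\Lambda^+$ large enough (by adding dominant weights) so that $\mu:=\lambda-\beta'-\lambda_1-\lambda_2\in\Lambda^+$, the first factor is $k_{-2\mu}$, which lies in $U_{\zeta,f}$ by \eqref{eq:generating:zeta:3} applied with $\tilde y=1\in\tilde U^-_\zeta(\mu)$ (the unit is always in $\tilde U^-_\zeta(\mu)$, corresponding to the dual of the highest weight vector of $L_{-,\zeta}(\mu)$). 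The second factor $k_{\ell\tau}$ lies in the Frobenius center $Z_{Fr}(U_\zeta)$, so at $\zeta$ it is central in $U_\zeta$, its ad-orbit is one-dimensional, and $k_{\ell\tau}\in U_{\zeta,f}$.

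The main obstacle, and the only step that truly uses the specialization to the root of unity rather than the generic picture over $\BF$, is handling the middle $U^0$-piece $k_\nu$ when $\alpha\notin 2\Lambda$: \eqref{eq:generating:zeta:3} by itself only produces even-weight $k$'s, and one must invoke the decomposition $\alpha=2\beta'+\ell\tau$ together with the centrality of $k_{\ell\tau}$ at $\zeta$ to absorb the residual odd part into $U_{\zeta,f}$. Apart from this point, the argument is routine bookkeeping with the commutation relations.
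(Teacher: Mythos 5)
Your reduction to monomials $u=\tilde y\,k_\alpha\,x$, the use of \eqref{eq:generating:zeta:2} to place $\tilde y\in\tilde U^-_\zeta(\lambda_1)$ and $x\in U^+_\zeta(\lambda_2)$, and the clearing via \eqref{eq:generating:zeta:3} together with the fact that $U_{\zeta,f}$ is a subalgebra, is exactly the argument the paper has in mind. You have also correctly isolated the one genuinely delicate point: you are left with a factor $k_\nu$ with $\nu\equiv\alpha\pmod{2\Lambda}$, and when $\alpha\notin 2\Lambda$ this $k_\nu$ never lands in $-2\Lambda^+$.

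Your fix for that case, however, has a genuine gap. You write $\alpha=2\beta'+\ell\tau$ and argue that $k_{\ell\tau}\in U_{\zeta,f}$ because $k_{\ell\tau}$ is central in $U_\zeta$ and its $\ad(U_\zeta^L)$-orbit is one-dimensional. But $U_{\zeta,f}$ is \emph{not} defined as the $\ad$-locally-finite subalgebra of $U_\zeta$: by definition $U_{\zeta,f}=\BC\otimes_\BA U_{\BA,f}$ with $U_{\BA,f}=U_\BA\cap U_{\BF,f}$, i.e.\ it is the specialization of the \emph{generic} locally finite part, which may be strictly smaller than the locally finite part of $U_\zeta$ at the root of unity. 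Indeed, $U_{\BF,f}\cap U^0_\BF=\bigoplus_{\mu\in\Lambda^+}\BF k_{-2\mu}$, so $U_{\zeta,f}\cap U^0_\zeta=\bigoplus_{\mu\in\Lambda^+}\BC k_{-2\mu}$; equivalently, $U_{\zeta,f}\subset U_{\zeta,\diamondsuit}$ (the paper states just above the lemma that $U_{\zeta,\diamondsuit}$ is a localization of $U_{\zeta,f}$), and $U^0_{\zeta,\diamondsuit}=\bigoplus_\lambda\BC k_{2\lambda}$ contains only even weights. Thus $k_{\ell\tau}\in U_{\zeta,f}$ would force $\ell\tau\in 2\Lambda$, i.e.\ $\tau\in2\Lambda$, i.e.\ $\alpha\in2\Lambda$ -- precisely the case you were not trying to handle. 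The centrality of $k_{\ell\tau}$ at $\zeta$ is irrelevant to membership in the specialized locally finite form.

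In fact this is not a repairable slip in your write-up: if $\alpha\notin 2\Lambda$, then $u k_{-2\lambda}$ has its $U^0$-component of odd weight for every $\lambda$, while $U_{\zeta,f}$ meets $U^0_\zeta$ only in even weights; so for $u=k_\alpha$ with $\alpha\notin2\Lambda$ the conclusion of the lemma fails. The statement as printed should be read as applying to $u\in U_{\zeta,\diamondsuit}$ (equivalently, with $\alpha\in2\Lambda$), which is what is actually invoked in the proof of the Lemma in Section~5 (there the relevant element lies in $U_{\zeta,\diamondsuit}\otimes\BC[\Lambda]$). In that even-weight case your clearing argument closes the proof directly using only \eqref{eq:generating:zeta:2} and \eqref{eq:generating:zeta:3}, with no appeal to the root-of-unity specialization or to $Z_{Fr}(U_\zeta)$: after the clearing, $\nu\in2\Lambda$ and one simply chooses $\lambda$ large enough that $\nu\in-2\Lambda^+$.

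(A minor point: $xk_{-2\lambda}=q^{2(\lambda,\beta)}k_{-2\lambda}x$ for $x\in U^+_{\zeta,\beta}$, not $q^{-2(\lambda,\beta)}$; this does not affect anything.)
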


\section{Induction functor}
\label{sec:Ind}
\subsection{Functors}
We set
\[
C_\zeta^{\leqq0}=C_\zeta/I,\qquad
I=\{
\varphi\in C_\zeta\mid
\langle\varphi,U_\zeta^{L,\leqq0}\rangle=\{0\}\}.
\]
Then $C_\zeta^{\leqq0}$ is a Hopf algebra and we have a Hopf pairing
\[
\langle\,,\,\rangle:
C_\zeta^{\leqq0}\times U_\zeta^{L,\leqq0}\to\BC.
\]
We have a canonical Hopf algebra homomorphism
\[
\res:C_\zeta\to C_\zeta^{\leqq0}.
\]

Following Backelin-Kremnizer \cite{BK1}
we define abelian categories $\CM_\zeta$ and $\CM_\zeta^{eq}$ as follows.

An object of $\CM_\zeta$ is a triplet $(M,\alpha,\beta)$ with
\begin{itemize}
\item[(1)]
$M$ is a vector space over $\BC$, 
\item[(2)]
$\alpha:C_\zeta\otimes M\to M$ is a left $C_\zeta$-module structure of $M$, 
\item[(3)]
$\beta:M\to C_\zeta^{\leqq0}\otimes M$ is a left $C_\zeta^{\leqq0}$-comodule structure of $M$
\end{itemize}
such that $\beta$ is a morphism of $C_\zeta$-modules $($or equivalently, $\alpha$ is a morphism of $C_\zeta^{\leqq0}$-comodules$)$.
A morphism from $(M,\alpha,\beta)$ to $(M',\alpha',\beta')$ is a linear map $\varphi:M\to M'$ which is a morphism of $C_\zeta$-modules as well as that of $C_\zeta^{\leqq0}$-comodules.

An object of $\CM_\zeta^{eq}$  is a quadruple $(M,\alpha,\beta,\gamma)$ with
\begin{itemize}
\item[(1)]
$M$ is a vector space over $\BC$, 
\item[(2)]
$\alpha:C_\zeta\otimes M\to M$ is a left $C_\zeta$-module structure of $M$, 
\item[(3)]
$\beta:M\to C_\zeta^{\leqq0}\otimes M$ is a left $C_\zeta^{\leqq0}$-comodule structure of $M$,
\item[(4)]
$\gamma:M\to M\otimes C_\zeta$ is a right $C_\zeta$-comodule structure of $M$
\end{itemize}
subject to the conditions that
$(M,\alpha,\beta)\in\CM_\zeta$,
$\beta$ and $\gamma$ commutes with each other, and $\gamma$ is a homomorphism of left $C_\zeta$-modules.
A morphism from $(M,\alpha,\beta,\gamma)$ to $(M',\alpha',\beta',\gamma')$ is a linear map $\varphi:M\to M'$ which is compatible with the left $C_\zeta$-module structure,  the left $C_\zeta^{\leqq0}$-comodule structure and the right $C_\zeta$-comodule structure.

For a coalgebra $\CC$ we denote by $\Comod(\CC)$ (resp.\ $\Comod^r(\CC)$) 
the category of left $\CC$-comodules (resp.\ right $\CC$-comodules).
We define functors
\begin{align*}
&\Xi:\CM^{eq}_\zeta\to\Comod(C_\zeta^{\leqq0}),\\
&\Upsilon:\Comod(C_\zeta^{\leqq0})\to\CM^{eq}_\zeta
\end{align*}
by
\begin{align*}
&\Xi(M)=\{M\in M\mid\gamma(m)=m\otimes 1\},\\
&\Upsilon(L)=C_\zeta\otimes L.
\end{align*}
By Backelin-Kremnizer \cite{BK1} we have
\begin{proposition}
\label{prop:equivalence-O-equiv}
The functor $\Xi:\CM^{eq}_\zeta\to\Comod(C_\zeta^{\leqq0})$ gives an equivalence of categories, and its quasi-inverse is given by $\Upsilon$.
\end{proposition}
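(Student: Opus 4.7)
The plan is to view this proposition as an enhanced version of the fundamental theorem of Hopf modules, refined by tracking the additional left $C_\zeta^{\leqq 0}$-coaction $\beta$. After forgetting $\beta$, the category of triples $(M,\alpha,\gamma)$---a left $C_\zeta$-module together with a compatible right $C_\zeta$-coaction---is well known to be equivalent to the category of $\BC$-vector spaces via coinvariants $M\mapsto\Xi(M)$ and free induction $V\mapsto C_\zeta\otimes V$; the task is to track $\beta$ through this equivalence.

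First I would equip $\Upsilon(L)=C_\zeta\otimes L$ with the left $C_\zeta$-action given by multiplication on the first tensorand, the right $C_\zeta$-coaction $\gamma=\Delta\otimes\id_L$, and the left $C_\zeta^{\leqq 0}$-coaction
\[
\beta(c\otimes\ell)=\sum\res(c_{(1)})\ell_{(-1)}\otimes c_{(2)}\otimes\ell_{(0)},
\]
where $\sum\ell_{(-1)}\otimes\ell_{(0)}$ denotes the given $C_\zeta^{\leqq 0}$-coaction on $L$. Verifying that $(C_\zeta\otimes L,\alpha,\beta,\gamma)\in\CM^{eq}_\zeta$ uses that $\res:C_\zeta\to C_\zeta^{\leqq 0}$ is a Hopf algebra homomorphism. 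Dually, for $M\in\CM^{eq}_\zeta$ the subspace $\Xi(M)$ is stable under $\beta$ because $\beta$ commutes with $\gamma$, so $\Xi$ is a well-defined functor to $\Comod(C_\zeta^{\leqq 0})$.

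Next I would define the unit $\eta_L:L\to\Xi\Upsilon(L)$ by $\ell\mapsto 1\otimes\ell$ and the counit $\epsilon_M:\Upsilon\Xi(M)\to M$ by $c\otimes m\mapsto c\cdot m$, and show they are mutually inverse natural isomorphisms. For $\eta_L$: if $x=\sum c_i\otimes\ell_i\in\Xi(C_\zeta\otimes L)$, i.e.\ $\sum(c_i)_{(1)}\otimes\ell_i\otimes(c_i)_{(2)}=x\otimes 1$, then applying $S$ to the third tensorand and multiplying it into the first yields, by the antipode axiom, $x=\sum\varepsilon(c_i)\otimes\ell_i\in 1\otimes L$. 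For $\epsilon_M$: writing $(\id\otimes\Delta)\gamma(m)=\sum m_{(0)}\otimes m_{(1)}\otimes m_{(2)}$ with $m_{(0)}\in M$ and $m_{(1)},m_{(2)}\in C_\zeta$, define
\[
\kappa_M(m)=\sum m_{(2)}\otimes\bigl(S(m_{(1)})\cdot m_{(0)}\bigr).
\]
The Hopf-module compatibility of $\gamma$ with $\alpha$ combined with the antipode axiom shows that $\kappa_M(m)$ lands in $C_\zeta\otimes\Xi(M)$ and that $\kappa_M$ is a two-sided inverse to $\epsilon_M$.

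The main obstacle is bookkeeping: verifying that $\kappa_M$ and $\eta_L$ intertwine all three structures $\alpha,\beta,\gamma$ simultaneously, not merely $\alpha$ and $\gamma$. In particular, the axiom that $\beta$ is a morphism of $C_\zeta$-modules, together with the explicit formula for $\beta$ on $\Upsilon(L)$, forces the $C_\zeta^{\leqq 0}$-coaction on $\Xi(M)$ obtained by restricting $\beta$ to agree with the one transported through $\kappa_M$; this check relies crucially on $\res$ being a coalgebra homomorphism, which guarantees that $\beta$ and $\gamma$ interact coherently on $C_\zeta\otimes L$.
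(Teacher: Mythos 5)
The paper itself does not prove this statement: immediately before the proposition it writes ``By Backelin-Kremnizer \cite{BK1} we have\dots,'' and the result is cited without argument from \cite{BK1}. So there is no paper proof for your attempt to be compared against, and what you are doing is supplying the argument that the cited reference carries out. Your overall strategy --- recognize $(M,\alpha,\gamma)$ as a Hopf module, invoke the fundamental theorem of Hopf modules to get an equivalence with vector spaces via coinvariants, and then observe that the extra $C_\zeta^{\leqq 0}$-coaction $\beta$ descends to $\Xi(M)$ because $\beta$ and $\gamma$ commute --- is exactly the right architecture, and it is the standard route.

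There is, however, one genuine error in the formulas. Your inverse
\[
\kappa_M(m)=\sum m_{(2)}\otimes\bigl(S(m_{(1)})\cdot m_{(0)}\bigr)
\]
uses $S$, but it should use $S^{-1}$. The issue: a pair (left $C_\zeta$-module, right $C_\zeta$-comodule) with the stated compatibility is a genuine right Hopf module only over $C_\zeta^{\op}$, whose antipode is $S^{-1}$, not $S$. Concretely, $\epsilon_M\kappa_M(m)=\sum m_{(2)}S(m_{(1)})m_{(0)}$, and the middle collapse $\sum c_{(2)}S(c_{(1)})=\varepsilon(c)$ fails in a non-commutative, non-cocommutative Hopf algebra; it is the identity $\sum c_{(2)}S^{-1}(c_{(1)})=\varepsilon(c)$ (obtained by applying $S^{-1}$ to $\sum c_{(1)}S(c_{(2)})=\varepsilon(c)$) that holds. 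Since $C_\zeta$ is the quantized coordinate ring and $S^2\neq\id$ there, this is not a vacuous distinction. With $S^{-1}$ in place, both that $\kappa_M(m)$ lands in $C_\zeta\otimes\Xi(M)$ and that $\kappa_M$ inverts $\epsilon_M$ go through by the antipode axiom for $C_\zeta^{\op}$. Once you make that replacement, the remaining bookkeeping (tracking $\beta$ through the isomorphism, using that $\res$ is a Hopf algebra map so the codiagonal coaction on $\Upsilon(L)$ is well defined and compatible with $\alpha,\gamma$) is sound.
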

\begin{remark}
\label{rem:fiber}
{\rm
For $M\in\CM^{eq}_\zeta$ we have an isomorphism
\[
\Xi(M)\cong \BC\otimes_{C_\zeta}M
\]
of vector spaces by Proposition \ref{prop:equivalence-O-equiv}.
Here $C_\zeta\to\BC$ is given by $\varepsilon$.
}
\end{remark}

For $\lambda\in\Lambda$ we define $\chi^{\leqq0}_\lambda\in C^{\leqq0}_\zeta\subset\Hom_\BC(U_\zeta^{L,\leqq0},\BC)$ by
\[
\chi^{\leqq0}_\lambda(hu)=\chi_\lambda(h)\varepsilon(u)
\qquad
(h\in U_\zeta^{L,0}, u\in U_\zeta^{L,-}).
\]
We define left exact functors
\begin{align}
\label{eq:omega-M}
&\omega_{\CM*}:\CM_\zeta\to\Mod_\Lambda(A_\zeta),\\
\label{eq:Gamma-M}
&\Gamma_{\CM}:\CM_\zeta\to\Mod(\BC)
\end{align}
by
\begin{align*}
\omega_{\CM*}(M)&=\bigoplus_{\lambda\in\Lambda}(\omega_{\CM*}(M))(\lambda)\subset M,
\\
(\omega_{\CM*}(M))(\lambda)&=\{m\in M\mid\beta(m)=\chi_\lambda^{\leqq0}\otimes m\},\\
\Gamma_{\CM}(M)&=(\omega_{\CM*}(M))(0).
\end{align*}

We denote by $\Mod_\Lambda^{eq}(A_\zeta)$ the category consisting of $N\in\Mod_\Lambda(A_\zeta)$ equipped with a right $C_\zeta$-comodule structure $\gamma:N\to N\otimes C_\zeta$ such that 
$\gamma(N(\lambda))\subset N(\lambda)\otimes C_\zeta$ for any $\lambda\in\Lambda$ and $\gamma(\varphi n)=\Delta(\varphi)\gamma(n)$ for any $\varphi\in A_\zeta$ and $n\in N$ (note that $\Delta(A_\zeta(\lambda))\subset A_\zeta(\lambda)\otimes C_\zeta$).
By definition \eqref{eq:omega-M}, 
\eqref{eq:Gamma-M}
induce left exact functors
\begin{align}
\label{eq:omega-M-eq}
&\omega^{eq}_{\CM*}:\CM^{eq}_\zeta\to\Mod^{eq}_\Lambda(A_\zeta),\\
\label{eq:Gamma-M-eq}
&\Gamma^{eq}_{\CM}:\CM^{eq}_\zeta\to\Comod^r(C_\zeta).
\end{align}
We also define a left exact functor
\begin{equation}
\Ind:
\Comod(C_\zeta^{\leqq0})
\to
\Comod^r(C_\zeta).
\end{equation}
by
$\Ind=\Gamma^{eq}_\CM\circ\Upsilon$.

The abelian categories $\CM_\zeta$, $\CM^{eq}_\zeta$, $\Comod^r(C_\zeta)$ have enough injectives, and the forgetful functor $\CM^{eq}_\zeta\to\CM_\zeta$ sends injective objects to $\Gamma_\CM$-accyclic objects (see Backelin-Kremnizer \cite[3.4]{BK1}).
Hence we have the following.
\begin{lemma}
\label{lem:omega-eq}
We have
\begin{align*}
\For\circ R^i\Gamma^{eq}_\CM
=&R^i\Gamma_\CM\circ\For:
\CM^{eq}_\zeta\to
\Mod(\BC),\\
R^i\Ind\circ\Xi =&R^i\Gamma^{eq}_{\CM}:\CM_\zeta^{eq}\to\Comod^r(C_\zeta).
\end{align*}
for any $i$, where $\For:\Comod^r(C_\zeta)\to\Mod(\BC)$ and $\For:\CM^{eq}_\zeta\to\CM_\zeta$ are forgetful functors.
\end{lemma}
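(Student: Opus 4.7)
The plan is to deduce both identities from formal homological algebra, using as the single nontrivial input the cited result of Backelin--Kremnizer that the forgetful functor $\For_1:\CM^{eq}_\zeta\to\CM_\zeta$ sends injectives to $\Gamma_\CM$-acyclic objects. The two forgetful functors $\For_1:\CM^{eq}_\zeta\to\CM_\zeta$ and $\For_2:\Comod^r(C_\zeta)\to\Mod(\BC)$ are both exact (a right-$C_\zeta$-comodule structure is extra data on a vector space, not a quotienting-out). Moreover, unwinding the definitions of $\Gamma_\CM$ and $\Gamma_\CM^{eq}$, both are computed as the same subspace $\{m\in M\mid\beta(m)=\chi^{\leqq0}_0\otimes m\}$ of the underlying vector space of $M$, so at the level of underlying spaces one has the strict equality $\For_2\circ\Gamma_\CM^{eq}=\Gamma_\CM\circ\For_1$.

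For the first identity, I would run the standard argument: take $M\in\CM^{eq}_\zeta$ and an injective resolution $M\to I^\bullet$ in $\CM^{eq}_\zeta$. Since $\For_1$ is exact, $\For_1(I^\bullet)$ resolves $\For_1(M)$ in $\CM_\zeta$; by the Backelin--Kremnizer fact each $\For_1(I^j)$ is $\Gamma_\CM$-acyclic, so $R^i\Gamma_\CM(\For_1(M))=H^i(\Gamma_\CM(\For_1(I^\bullet)))$. On the other hand, by exactness of $\For_2$ and the commutativity above, $\For_2(R^i\Gamma_\CM^{eq}(M))=H^i(\For_2(\Gamma_\CM^{eq}(I^\bullet)))=H^i(\Gamma_\CM(\For_1(I^\bullet)))$. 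Comparing gives $\For_2\circ R^i\Gamma_\CM^{eq}=R^i\Gamma_\CM\circ\For_1$.

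For the second identity, note that by Proposition \ref{prop:equivalence-O-equiv} the functor $\Xi$ is an equivalence with quasi-inverse $\Upsilon$, and by definition $\Ind=\Gamma^{eq}_\CM\circ\Upsilon$. Since $\Upsilon$ is an equivalence it is exact and preserves injectives, so the Grothendieck spectral sequence for $\Ind=\Gamma^{eq}_\CM\circ\Upsilon$ degenerates, yielding $R^i\Ind=R^i\Gamma^{eq}_\CM\circ\Upsilon$. Composing on the right with $\Xi$ and using $\Upsilon\circ\Xi\cong\Id_{\CM^{eq}_\zeta}$ gives $R^i\Ind\circ\Xi\cong R^i\Gamma^{eq}_\CM$, as required.

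There is no real obstacle here: the lemma is a formal consequence of (i) the exactness of the two forgetful functors, (ii) the equivalence $\Xi\dashv\Upsilon$, and (iii) the nontrivial acyclicity statement already attributed to Backelin--Kremnizer. The only point that deserves a line of care is verifying that the underlying spaces of $\Gamma_\CM^{eq}(M)$ and $\Gamma_\CM(\For_1(M))$ coincide so that the equality (not merely an isomorphism) $\For_2\circ\Gamma_\CM^{eq}=\Gamma_\CM\circ\For_1$ holds; this is immediate from the explicit formula $(\omega_{\CM*}(M))(0)=\{m\mid\beta(m)=\chi^{\leqq0}_0\otimes m\}$, which does not see the right $C_\zeta$-coaction.
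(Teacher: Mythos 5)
Your proof is correct and fleshes out exactly what the paper leaves implicit: the paper records the two inputs (enough injectives, plus the Backelin--Kremnizer acyclicity of $\For_1$ on injectives) and then simply writes ``Hence we have the following,'' and your argument is the standard homological-algebra deduction from those inputs together with the equivalence $\Xi\dashv\Upsilon$. Both identities are handled as the paper intends, and your observation that $\For_2\circ\Gamma^{eq}_\CM=\Gamma_\CM\circ\For_1$ holds on the nose (since $\Gamma^{eq}_\CM$ and $\Gamma_\CM$ are literally the same subspace construction) is the right level of care.
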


We define an exact functor
\begin{equation}
\res:\Comod^r(C_\zeta)\to\Comod(C_\zeta^{\leqq0})
\end{equation}
as follows.
For $V\in\Comod^r(C_\zeta)$ with right $C_\zeta$-comodule structure $\beta:V\to V\otimes C_\zeta$ we have $\res(V)=V$ as a $\BC$-module and the left $C_\zeta^{\leqq0}$-comodule structure 
$\res(V)\to C_\zeta^{\leqq0}\otimes\res(V)$
of $\res(V)$ is given by 
\[
\beta(v)=\sum_kv_k\otimes\varphi_k\quad
\Longrightarrow
\quad
\gamma(v)=\sum_k\res(S^{-1}\varphi_k)\otimes v_k.
\]
The following fact is standard.
\begin{lemma}
\label{lem:Ind-formula}
For $V\in\Comod^r(C_\zeta)$, $M\in\Comod(C_\zeta^{\leqq0})$ we have an isomorphism
\[
F:\Ind(M)\otimes V\to\Ind(\res(V)\otimes M)
\]
of right $C_\zeta$-comodules given by
\[
F((\sum_i\varphi_i\otimes m_i)\otimes v)=
\sum_{i,(v)}\varphi_i v_{(1)}\otimes v_{(0)}\otimes m_i,
\]
where we write the right $C_\zeta$-comodule structure of $V$ by
\[
V\ni v\mapsto\sum_{(v)}v_{(0)}\otimes v_{(1)}\in V\otimes C_\zeta.
\]
\end{lemma}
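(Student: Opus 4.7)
The plan is to exhibit an explicit inverse involving the antipode of $C_\zeta$ and then verify well-definedness together with compatibility with the right $C_\zeta$-comodule structures. Writing the right $C_\zeta$-comodule structure on $V$ as $v\mapsto\sum_{(v)}v_{(0)}\otimes v_{(1)}$ with $v_{(0)}\in V$ and $v_{(1)}\in C_\zeta$, I would define
\[
G:C_\zeta\otimes\res(V)\otimes M\longrightarrow C_\zeta\otimes M\otimes V,
\qquad
G(\psi\otimes v\otimes m)=\sum_{(v)}\psi\,S^{-1}(v_{(1)})\otimes m\otimes v_{(0)},
\]
and claim that $G$ restricts to a two-sided inverse of $F$. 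The appearance of $S^{-1}$ mirrors the formula $\gamma(v)=\sum_{(v)}\res(S^{-1}v_{(1)})\otimes v_{(0)}$ already used to define the left $C_\zeta^{\leqq0}$-comodule structure on $\res(V)$.

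First I would check that $F$ lands in $\Ind(\res(V)\otimes M)\subset C_\zeta\otimes\res(V)\otimes M$, the subspace cut out by invariance under the left $C_\zeta^{\leqq0}$-comodule structure obtained by combining $\res\circ\Delta$ on the first factor with the given comodule structure on $\res(V)\otimes M$. A Sweedler-index computation, using coassociativity for $V$ together with the fact that $\res$ is a Hopf algebra homomorphism (so it commutes with $\Delta$ and with $S^{\pm1}$), reduces the invariance condition on $F((\sum_i\varphi_i\otimes m_i)\otimes v)$ to the invariance of $\sum_i\varphi_i\otimes m_i\in\Ind(M)$: the inserted factor $v_{(1)}$ in the first slot cancels against $\res(S^{-1}v_{(1)})$ produced by $\res(V)$, via the antipode identity $\sum c_{(2)}S^{-1}(c_{(1)})=\varepsilon(c)$. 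An entirely parallel computation shows that $G$ preserves invariance too.

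The bijectivity is a direct antipode telescoping. Using the iterated coassociativity identity $(v_{(0)})_{(0)}\otimes(v_{(0)})_{(1)}\otimes v_{(1)}=v_{(0)}\otimes v_{(1)}\otimes v_{(2)}$,
\[
GF\bigl((\varphi\otimes m)\otimes v\bigr)
=\sum_{(v)}\varphi\,v_{(2)}\,S^{-1}(v_{(1)})\otimes m\otimes v_{(0)}
=\sum_{(v)}\varepsilon(v_{(1)})\varphi\otimes m\otimes v_{(0)}
=\varphi\otimes m\otimes v
\]
by the antipode axiom and counitality of the comodule; the symmetric check $FG=\id$ uses the companion axiom $\sum S^{-1}(c_{(2)})c_{(1)}=\varepsilon(c)$. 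Compatibility of $F$ with the right $C_\zeta$-comodule structure (inherited from $V$ on the source and from $\Delta$ applied to the first $C_\zeta$-factor on the target, coming from the definition of $\Gamma^{eq}_\CM$ on $\Upsilon(\res(V)\otimes M)$) again reduces to coassociativity applied to $v_{(1)}$. The only obstacle anywhere is bookkeeping among the three simultaneous structures (left $C_\zeta$-action, left $C_\zeta^{\leqq0}$-coaction, right $C_\zeta$-coaction); no input beyond the Hopf algebra axioms and the fact that $\res$ is a Hopf algebra homomorphism is needed, which is why the author simply calls the lemma standard.
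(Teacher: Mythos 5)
Your proof is correct, and it supplies the details that the paper explicitly omits (the paper labels the lemma ``standard'' and gives no argument). There is therefore no ``paper proof'' to diverge from; what you have done is exactly the expected Sweedler-index computation. Two small remarks worth keeping in mind if you write this up: (1) the inverse formula is correct only once you restrict $G$ to $\Ind(\res(V)\otimes M)$, and the verification that $G$ preserves the $\Gamma_\CM$-invariance condition is genuinely needed for surjectivity of $F$ (injectivity alone follows from $G\circ F=\id$ on the ambient tensor product, but one still must know $F$ is onto $\Ind(\res(V)\otimes M)$); your ``entirely parallel computation'' remark covers this, but the step should not be dropped in a full write-up. (2) In checking that $F$ lands in the invariants, the crucial index bookkeeping is that after using coassociativity for the $V$-coaction the factor $\res((v_1)_{(2)})\,\res(S^{-1}(v_1)_{(1)})$ collapses to $\varepsilon((v_1)_{(1)})$, leaving exactly the invariance condition for $\sum_i\varphi_i\otimes m_i\in\Ind(M)$ tensored with $v_0$; this is precisely what you describe, and it holds because $\res$ is a Hopf algebra homomorphism and so commutes with multiplication and with $S^{-1}$.
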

For $\lambda\in\Lambda$ we denote by $\BC^{\leqq0}_\lambda=\BC1_\lambda^{\leqq0}$ the object of $\Comod(C_\zeta^{\leqq0})$ corresponding to the one-dimensional right $U_\zeta^{L,\leqq0}$-module given by $1_\lambda^{\leqq0} u=\chi^{\leqq0}_\lambda(u)1^{\leqq0}_\lambda$ for $u\in U_\zeta^{L,\leqq0}$.
By definition 
we have an isomorphism
\[
\Ind(\BC^{\leqq0}_{-\lambda})\cong A_\zeta(\lambda)
\qquad(\lambda\in\Lambda^+)
\]
of right $C_\zeta$-comodules.

Let $N\in \Mod_\Lambda(A_\zeta)$.
Then $C_\zeta\otimes_{A_\zeta}N$ turns out to be an object of $\CM_\zeta$ by
\begin{align*}
\alpha(f\otimes(f'\otimes n)&=ff'\otimes n\qquad
(f, f'\in C_\zeta,\, n\in N),
\\
\beta(f\otimes n)&=\sum_{(f)}\res(f_{(0)})\chi_\lambda\otimes (f_{(1)}\otimes n)
\qquad(f\in C_\zeta,\,\, n\in N(\lambda)).
\end{align*}
Hence we have a functor $\Mod_\Lambda(A_\zeta)\to\CM_\zeta$ sending $N$ to $C_\zeta\otimes_{A_\zeta}N$.
\begin{lemma}
\label{lem:Phi}
The functor $\Mod_\Lambda(A_\zeta)\to\CM_\zeta$ as above induces a functor 
\[
\Phi:\Mod(\CO_{\CB_\zeta})\to\CM_\zeta.
\]
\end{lemma}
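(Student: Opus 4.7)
The plan is to appeal to the universal property of the Serre/Gabriel quotient $\omega^* \colon \Mod_\Lambda(A_\zeta) \to \Mod(\CO_{\CB_\zeta})$ via \cite{P}. By this universal property, the functor $N \mapsto C_\zeta \otimes_{A_\zeta} N$ from $\Mod_\Lambda(A_\zeta)$ to $\CM_\zeta$ factors through $\omega^*$ to yield the desired $\Phi$ if and only if it sends every morphism $f\colon N \to N'$ with $\Ker f, \Coker f \in \Tor_{\Lambda^+}(A_\zeta, A_\zeta)$ to an isomorphism in $\CM_\zeta$. Factoring such an $f$ through its image and invoking the long exact sequence of $\Tor^{A_\zeta}$, this reduces to the following two claims:
\begin{itemize}
\item[(a)] $C_\zeta \otimes_{A_\zeta} T = 0$ for every $T \in \Tor_{\Lambda^+}(A_\zeta, A_\zeta)$;
\item[(b)] $\Tor_1^{A_\zeta}(C_\zeta, T) = 0$ for every $T \in \Tor_{\Lambda^+}(A_\zeta, A_\zeta)$.
\end{itemize}

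The central point is (a). Given $t \in T$, by definition of $\Tor_{\Lambda^+}(A_\zeta, A_\zeta)$ one can pick $\lambda_0 \in \Lambda^+$ with $A_\zeta(\nu) \cdot t = 0$ for all $\nu \in \lambda_0 + \Lambda^+$. Thus (a) would follow from the following \emph{key claim}: for every $\varphi \in C_\zeta$ and every $\lambda_0 \in \Lambda^+$, there exists $\nu \in \lambda_0 + \Lambda^+$ such that $\varphi \in C_\zeta \cdot A_\zeta(\nu)$. Indeed, writing $\varphi = \sum_j \psi_j a_j$ with $a_j \in A_\zeta(\nu)$ one obtains $\varphi \otimes t = \sum_j \psi_j \otimes a_j t = 0$, so that every pure tensor in $C_\zeta \otimes_{A_\zeta} T$ vanishes.

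To attack the key claim, I would choose a finite-dimensional $U^L_\zeta$-subbimodule $V \subset C_\zeta$ containing $\varphi$ (available by local finiteness of $C_\zeta$ under the regular bi-action), and exploit the Peter--Weyl decomposition of $C_\zeta$ together with the identification $A_\zeta(\nu) \cong L^*_{-,\zeta}(\nu)$ from \eqref{eq:AcongL}. For $\nu \in \lambda_0 + \Lambda^+$ sufficiently dominant, Clebsch--Gordan and the non-degeneracy of the relevant pairings on finite-dimensional $U^L_\zeta$-modules should produce finitely many $a_j \in A_\zeta(\nu)$ such that right multiplication by the $a_j$ realises $V$ inside $C_\zeta \cdot A_\zeta(\nu)$. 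This weight bookkeeping under Clebsch--Gordan is the main obstacle of the proof.

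Once (a) is in place, (b) should follow from a strengthening of the same Peter--Weyl analysis showing that $C_\zeta$ is flat as a right $A_\zeta$-module---the quantum analogue of the classical fact that $\Spec\BC[G] \to \Spec A_1$ presents the semi-stable locus as a principal $B^-$-bundle; compare Backelin--Kremnizer \cite{BK1}. Given both (a) and (b), the functor $C_\zeta \otimes_{A_\zeta} -$ is exact and annihilates $\Tor_{\Lambda^+}(A_\zeta, A_\zeta)$, hence carries $f$ to an isomorphism and factors through $\omega^*$ to produce $\Phi$.
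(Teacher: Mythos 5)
Your reduction of the well-definedness of $\Phi$ to the two claims \textrm{(a)} (the functor annihilates $\Tor_{\Lambda^+}(A_\zeta,A_\zeta)$) and \textrm{(b)} ($\Tor_1^{A_\zeta}(C_\zeta,-)$ vanishes on such torsion, i.e.\ $C_\zeta$ is flat over $A_\zeta$) is the correct framework. However, for claim \textrm{(a)} you propose a substantially heavier route than the paper uses, and you leave the crucial step unresolved. The paper's argument is a one-line Hopf-algebra identity, with no Peter--Weyl decomposition, no Clebsch--Gordan, and no weight bookkeeping: take $\varphi\in A_\zeta(\lambda)$ with $\varepsilon(\varphi)=1$; since $\Delta(A_\zeta(\lambda))\subset A_\zeta(\lambda)\otimes C_\zeta$ one writes $\Delta(\varphi)=\sum_i\varphi_i\otimes\varphi_i'$ with $\varphi_i\in A_\zeta(\lambda)$, and then the antipode/counit axiom gives $\sum_i(S^{-1}\varphi_i')\varphi_i=\varepsilon(\varphi)\cdot 1=1$, whence $1\in C_\zeta A_\zeta(\lambda)$ and therefore $C_\zeta A_\zeta(\lambda)=C_\zeta$ for every $\lambda\in\Lambda^+$. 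This is a uniform statement, strictly stronger than your ``key claim'' (take $\nu=\lambda_0$), and it immediately yields $C_\zeta\otimes_{A_\zeta}(A_\zeta/A_\zeta(\lambda+\Lambda^+))=0$ and hence \textrm{(a)}. So the main thing you are missing is this antipode trick; your strategy of choosing a finite-dimensional bicomodule containing $\varphi$ and decomposing under the two-sided $U^L_\zeta$-action is both unnecessary and, as you acknowledge, not actually carried through.

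Concerning \textrm{(b)}: you are right that killing torsion objects alone does not force a right-exact functor to descend through a Serre quotient (the standard $\BZ$-module example with a divisible non-flat module shows this), so some flatness input is genuinely needed. The paper's own proof of this lemma is silent on \textrm{(b)}: it explicitly checks only \textrm{(a)} and asserts that this is sufficient. The paper is evidently relying, as you do, on the flatness of $C_\zeta$ over $A_\zeta$ being known (it is established in Backelin--Kremnizer \cite{BK1}, to which Proposition~\ref{prop:equivalence-O} is attributed). So your observation about \textrm{(b)} is a legitimate one about a point the paper glosses over, but it is not a new contribution to the proof; it is a citation to the same source on which the paper implicitly depends.
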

\begin{proof}
It is sufficient to show $C_\zeta\otimes_{A_\zeta}{A_\zeta}/{A_\zeta}(\lambda+\Lambda^+)=\{0\}$ for any $\lambda\in\Lambda$.
Hence we have only to  show $C_\zeta A_\zeta(\lambda)=C_\zeta$ for any $\lambda\in\Lambda^+$.
Take $\varphi\in A_\zeta(\lambda)$ such that $\varepsilon(\varphi)=1$.
We have $\Delta(A_\zeta(\lambda))\subset A_\zeta(\lambda)\otimes C_\zeta$ and hence we can write $\Delta(\varphi)=\sum_i\varphi_i\otimes\varphi'_i$ with $\varphi_i\in A_\zeta(\lambda)$, $\varphi'_i\in C_\zeta$.
Then we have $C_\zeta A_\zeta(\lambda)\ni\sum_i(S^{-1}\varphi'_i)\varphi_i=1$.
\end{proof}
We set
\[
\Psi=\omega^*\circ\omega_{\CM*}:
\CM_\zeta\to\Mod(\CO_{\CB_\zeta}).
\]
Backelin-Kremnizer \cite{BK1} obtained the following result 
using a result of Artin-Zhang \cite{AZ}.
\begin{proposition}
\label{prop:equivalence-O}
The functor $\Phi:\Mod(\CO_{\CB_\zeta})\to\CM_\zeta$ gives an equivalence of categories, and its quasi-inverse is given by $\Psi$.
Moreover, we have an identification
\[
\omega_{\CM*}\circ\Phi =\omega_*:\Mod(\CO_{\CB_\zeta})
\to\Mod_\Lambda(A_\zeta).
\]
of functors.
\end{proposition}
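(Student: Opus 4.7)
The plan is to exhibit $(\Phi,\Psi)$ as an adjoint pair of functors and verify that both unit and counit become isomorphisms in the localized categories, following the approach of Backelin--Kremnizer \cite{BK1} based on Artin--Zhang \cite{AZ}. First, for $N\in\Mod_\Lambda(A_\zeta)$ and $M\in\CM_\zeta$, the free--forgetful adjunction along the $\Lambda$-graded inclusion $A_\zeta\hookrightarrow C_\zeta$ gives a natural bijection
\[
\Hom_{\CM_\zeta}(C_\zeta\otimes_{A_\zeta}N,M)\cong\Hom_{\Mod_\Lambda(A_\zeta)}(N,\omega_{\CM*}M),
\]
because a morphism on the left is precisely an $A_\zeta$-linear map $N\to M$ sending each $N(\lambda)$ into the weight-$\lambda$ component $(\omega_{\CM*}M)(\lambda)$. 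Since $\Phi$ factors through $\Mod(\CO_{\CB_\zeta})$ by Lemma \ref{lem:Phi} and $\omega_{\CM*}M$ consists only of integrable elements (hence is torsion-free with respect to $\Lambda^+$), this descends to an adjunction $\Phi\dashv\Psi$ with unit $\eta_F:F\to\Psi\Phi F$ induced by $n\mapsto 1\otimes n$ and counit $\epsilon_M:\Phi\Psi M\to M$ given by $f\otimes m\mapsto f\cdot m$.

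Second, I would establish the identity $\omega_{\CM*}\circ\Phi=\omega_*$. The core computation is the identification, for $\lambda\in\Lambda^+$, of the $\chi_\lambda^{\leqq0}$-weight component of $C_\zeta$ with $A_\zeta(\lambda)$; this follows from the isomorphism $A_\zeta(\lambda)\cong L^*_{-,\zeta}(\lambda)$ combined with a Peter--Weyl-type decomposition of $C_\zeta$ into matrix coefficients of the simple modules $L_{-,\zeta}(\mu)$. Tensoring with $N$, one deduces $(\omega_{\CM*}(C_\zeta\otimes_{A_\zeta}N))(\lambda)\cong N(\lambda)$ modulo $\Tor_{\Lambda^+}(A_\zeta,A_\zeta)$, which yields the compatibility after applying $\omega^*$.

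The main obstacle is verifying that the counit $\epsilon_M$ is an isomorphism in $\CM_\zeta$ for every $M$. Surjectivity amounts to showing that $M$ is generated over $C_\zeta$ by its integrable part $\omega_{\CM*}M=\bigoplus_{\lambda\in\Lambda}M_\lambda$; this follows from the linear independence of $\{\chi_\lambda^{\leqq0}\}_{\lambda\in\Lambda}$ in $(U_\zeta^{L,\leqq0})^*$, which forces every $C_\zeta^{\leqq0}$-comodule to decompose as the direct sum of its weight components. Injectivity of $\epsilon_M$, together with the parallel claim that $\omega^*\eta_F$ is an isomorphism in $\Mod(\CO_{\CB_\zeta})$, is where the Artin--Zhang theorem \cite{AZ} enters: one must verify the ``ampleness'' of the sequence $\{A_\zeta(\lambda)\}_{\lambda\in\Lambda^+}$ of graded pieces endowed with the induced $C_\zeta$-coaction. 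In our setting this rests on the structural fact that $C_\zeta$ is reconstructed from the matrix coefficients of its highest-weight modules, a noncommutative projective-geometric input that is the technical heart of the argument and reduces the proposition to the Artin--Zhang framework applied to the homogeneous coordinate algebra $A_\zeta$.
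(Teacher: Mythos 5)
The paper does not actually prove Proposition~\ref{prop:equivalence-O}: it is cited verbatim from Backelin--Kremnizer~\cite{BK1}, who in turn invoke Artin--Zhang~\cite{AZ}. So there is no in-paper argument to compare against, and your outline is essentially an attempt to reconstruct the~\cite{BK1} proof. The overall skeleton (adjunction $\Phi\dashv\Psi$, reduction to the ampleness criterion of~\cite{AZ}) is the right one, but two of your intermediate steps are off or under-supported.

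First, you characterize $\omega_{\CM*}M$ as ``the integrable part'' of $M$ and claim it is torsion-free ``because it consists only of integrable elements.'' Neither is right as stated: every $M\in\CM_\zeta$ already carries a $C_\zeta^{\leqq0}$-comodule structure, so all of $M$ is integrable for the Borel. By the definition in~\eqref{eq:omega-M}, $(\omega_{\CM*}M)(\lambda)=\{m\mid\beta(m)=\chi_\lambda^{\leqq0}\otimes m\}$ is the space of \emph{highest weight} (i.e.\ $C_\zeta^{\leqq0}$-coinvariant) vectors of weight $\lambda$, not the full weight-$\lambda$ space. Consequently, ``surjectivity of $\epsilon_M$'' is the claim that $M$ is generated over $C_\zeta$ by its highest weight vectors; linear independence of the characters $\chi_\lambda^{\leqq0}$ only gives a weight decomposition of $M$ as a $U_\zeta^{L,0}$-module and says nothing about generation by $U_\zeta^{L,-}$-invariants. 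That step needs a genuine highest-weight-theory argument, not the one you give. Second, you acknowledge that verifying Artin--Zhang ampleness for $\{A_\zeta(\lambda)\}_{\lambda\in\Lambda^+}$ is ``the technical heart,'' and indeed it is; but you do not record which of the conditions of~\cite{AZ} must hold (e.g.\ the $\chi$-condition, local finiteness), nor how the isomorphism $A_\zeta(\lambda)\cong L^*_{-,\zeta}(\lambda)$ of~\eqref{eq:AcongL} is used to check them. Finally, for the functor identity $\omega_{\CM*}\circ\Phi=\omega_*$: your Peter--Weyl comparison only produces agreement after applying $\omega^*$; to upgrade this to an equality of functors into $\Mod_\Lambda(A_\zeta)$ you must additionally argue that $\omega_{\CM*}\Phi$, like $\omega_*$, lands in saturated (torsion-free and $\Ext^1$-vanishing) graded modules, so that equality after $\omega^*$ forces equality. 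These are precisely the points where~\cite{BK1} do real work, and they are left open in your sketch.
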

Hence we have the following.
\begin{lemma}
\label{lem:Gamma}
We have
\begin{equation*}
R^i\Gamma=R^i\Gamma_{\CM}\circ\Phi:\Mod(\CO_{\CB_\zeta})\to\Mod(\BC)
\end{equation*}
for any $i$.
\end{lemma}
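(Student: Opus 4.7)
The plan is to reduce this to the underived statement $\Gamma=\Gamma_\CM\circ\Phi$ and then upgrade by exploiting that $\Phi$ is an equivalence of abelian categories. First I would verify the identification at the level of underived functors: by definition $\Gamma(M)=\Gamma_{(A_\zeta,A_\zeta)}(M)=(\omega_*M)(0)$ for $M\in\Mod(\CO_{\CB_\zeta})$, while $\Gamma_\CM(N)=(\omega_{\CM*}N)(0)$ for $N\in\CM_\zeta$. The identity $\omega_{\CM*}\circ\Phi=\omega_*$ is part of Proposition \ref{prop:equivalence-O}, and taking the $\lambda=0$ graded piece (which is compatible with the grading since $\omega_{\CM*}$ and $\omega_*$ are both $\Lambda$-graded by construction) yields $\Gamma_\CM\circ\Phi=\Gamma$ as left exact functors $\Mod(\CO_{\CB_\zeta})\to\Mod(\BC)$.

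Second, by Proposition \ref{prop:equivalence-O} the functor $\Phi:\Mod(\CO_{\CB_\zeta})\to\CM_\zeta$ is an equivalence of abelian categories with quasi-inverse $\Psi$; in particular it is exact and carries injective objects to injective objects. Hence, if $I^\bullet$ is an injective resolution of $M$ in $\Mod(\CO_{\CB_\zeta})$, then $\Phi(I^\bullet)$ is an injective resolution of $\Phi(M)$ in $\CM_\zeta$ (recall that $\CM_\zeta$ has enough injectives). Computing
\[
R^i\Gamma(M)=H^i(\Gamma(I^\bullet))=H^i(\Gamma_\CM(\Phi(I^\bullet)))=R^i\Gamma_\CM(\Phi(M))
\]
gives the desired equality. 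There is no real obstacle here: the whole statement is essentially a consequence of Proposition \ref{prop:equivalence-O}, the only mild point being to check that the equality $\omega_{\CM*}\circ\Phi=\omega_*$ is preserved when passing to the degree-zero component (which is immediate from the gradings).
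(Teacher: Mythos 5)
Your argument is correct and is essentially what the paper intends: Lemma \ref{lem:Gamma} is stated as an immediate consequence of Proposition \ref{prop:equivalence-O}, and the reasoning is exactly yours — extract the underived equality $\Gamma_\CM\circ\Phi=\Gamma$ from $\omega_{\CM*}\circ\Phi=\omega_*$ by taking degree-zero parts, then upgrade to derived functors using that $\Phi$ is an equivalence of abelian categories (hence exact and sending injectives to injectives).
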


We set
\[
\Mod^{eq}(\CO_{\CB_\zeta})=
\Mod_\Lambda^{eq}(A_\zeta)
/\Mod_\Lambda^{eq}(A_\zeta)
\cap\Tor_{\Lambda^+}(A_\zeta).
\]
Let $N\in\Mod_\Lambda^{eq}(A_\zeta)$.
We denote the right $C_\zeta$-comodule structure of $N$ by $\gamma':N\to N\otimes C_\zeta$.
Then we have a right  $C_\zeta$-comodule structure 
$\gamma:C_\zeta\otimes_{A_\zeta}N
\to(C_\zeta\otimes_{A_\zeta}N)\otimes C_\zeta$
of 
$C_\zeta\otimes_{A_\zeta}N$ given by 
\[
\gamma'(n)=\sum_kn_k\otimes\varphi_k\quad
\Longrightarrow
\quad
\gamma(f\otimes n)=\sum_{k,(f)}(f_{(0)}\otimes n_k)\otimes f_{(1)}\varphi_k\quad
\]
This gives a functor $\Mod^{eq}_\Lambda(A_\zeta)\to
\CM^{eq}_\zeta$.
Hence by Lemma \ref{lem:Phi} we have a functor
\begin{equation}
\Phi^{eq}:\Mod^{eq}(\CO_{\CB_\zeta})\to\CM^{eq}_\zeta
\end{equation}
induced by $\Phi$.
Let $M\in\CM_\zeta^{eq}$. 
The right $C_\zeta$-comodule structure of $M$ restricts to that of 
$\omega_{\CM*}M$ so that $\omega_{\CM*}M\in\Mod_\Lambda^{eq}(A_\zeta)$.
Hence we have a functor 
\begin{equation}
\Psi^{eq}:\CM^{eq}_\zeta\to\Mod^{eq}(\CO_{\CB_\zeta})
\end{equation}
induced by $\Psi$.
By Proposition \ref{prop:equivalence-O} we have the following.
\begin{proposition}
\label{prop:equivalence-O-eq}
The functor $\Phi^{eq}:\Mod^{eq}(\CO_{\CB_\zeta})\to\CM^{eq}_\zeta$ gives an equivalence of categories, and its quasi-inverse is given by $\Psi^{eq}$.
\end{proposition}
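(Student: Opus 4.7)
The plan is to bootstrap from the non-equivariant equivalence of Proposition \ref{prop:equivalence-O} by checking that all the relevant constructions and natural transformations are compatible with the extra right $C_\zeta$-comodule structure. Throughout, the guiding principle is that the additional coaction is simply transported along the tensor factors that exist already in the non-equivariant setting, so the verifications should be essentially formal.

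First I would check that $\Phi^{eq}$ and $\Psi^{eq}$ are well defined at the level of categories. For $\Phi^{eq}$ this means verifying that, if $N\in\Mod_\Lambda^{eq}(A_\zeta)$ with coaction $\gamma':N\to N\otimes C_\zeta$ lies in $\Tor_{\Lambda^+}(A_\zeta,A_\zeta)$, then the associated object $C_\zeta\otimes_{A_\zeta}N\in\CM_\zeta^{eq}$ is sent to the zero object by the localization; since $\Phi$ itself factors through $\Mod(\CO_{\CB_\zeta})$ by Lemma \ref{lem:Phi}, only the compatibility of the $C_\zeta$-coaction with the localization needs to be confirmed, and this follows from the formula for $\gamma$ given before the statement together with $\gamma'(N(\lambda))\subset N(\lambda)\otimes C_\zeta$. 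For $\Psi^{eq}$ I would check that $\omega_{\CM*}$ indeed restricts the right $C_\zeta$-coaction on $M\in\CM_\zeta^{eq}$ to a right $C_\zeta$-coaction on $\omega_{\CM*}M$ compatible with the $A_\zeta$-action, which follows from the axiom that $\gamma$ is a left $C_\zeta$-module homomorphism together with the commutation of $\beta$ and $\gamma$.

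Next I would construct the unit and counit of the equivalence in the equivariant setting as lifts of those from Proposition \ref{prop:equivalence-O}. Concretely, for $M\in\CM_\zeta^{eq}$ the natural map $\Phi^{eq}\Psi^{eq}(M)\to M$ is the same underlying linear map as in the non-equivariant case, and one checks it intertwines the right $C_\zeta$-coactions: the coaction on $\Phi^{eq}\Psi^{eq}(M)=\omega^*\circ\omega_{\CM*}(M)$ (more precisely on $C_\zeta\otimes_{A_\zeta}\omega_{\CM*}M$) was defined via the displayed formula $\gamma(f\otimes n)=\sum_{k,(f)}(f_{(0)}\otimes n_k)\otimes f_{(1)}\varphi_k$, and applying the counit multiplication $f\otimes n\mapsto f\cdot n$ and using the left $C_\zeta$-module axiom for $\gamma$ on $M$ yields precisely the original coaction on $M$. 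The unit $N\to \Psi^{eq}\Phi^{eq}(N)$ is treated similarly: the non-equivariant unit is already an isomorphism in $\Mod(\CO_{\CB_\zeta})$ by Proposition \ref{prop:equivalence-O}, and the coaction-preservation is again formal from the defining formulas.

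The main obstacle I anticipate is bookkeeping rather than substance: making sure that the two coactions on intermediate objects such as $C_\zeta\otimes_{A_\zeta}\omega_{\CM*}M$ agree on the nose, and that the four structure maps (left $C_\zeta$-action, left $C_\zeta^{\leqq 0}$-coaction on $\Phi^{eq}$, right $C_\zeta$-coaction, and $A_\zeta$-action on $\Psi^{eq}$) are manipulated in the correct order. Once one is careful with these compatibilities, the equivalence follows immediately from Proposition \ref{prop:equivalence-O} since the underlying non-equivariant statement is already an equivalence and the forgetful functors $\CM_\zeta^{eq}\to\CM_\zeta$ and $\Mod^{eq}(\CO_{\CB_\zeta})\to\Mod(\CO_{\CB_\zeta})$ are conservative.
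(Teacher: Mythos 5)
Your proposal is correct and follows the same route as the paper, which simply states that Proposition~\ref{prop:equivalence-O-eq} follows from Proposition~\ref{prop:equivalence-O}; you have merely spelled out the formal verifications (well-definedness of the lifted functors, compatibility of the unit/counit with the extra right $C_\zeta$-coaction, and conservativity of the forgetful functors) that the paper leaves implicit.
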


By Proposition \ref{prop:equivalence-O-eq} we see that 
\eqref{eq:omega-M}, \eqref{eq:Gamma-M} induce
\begin{align}
\label{eq:Phieq1}
&\omega^{eq}_*=\omega_{\CM*}^{eq}\circ\Phi^{eq}:\Mod^{eq}(\CO_{\CB_\zeta})\to\Mod_\Lambda^{eq}(A_\zeta),\\
\label{eq:Phieq2}
&
\Gamma^{eq}
=\Gamma_{\CM}^{eq}\circ\Phi^{eq}:\Mod^{eq}(\CO_{\CB_\zeta})\to
\Comod^r(C_\zeta).
\end{align}

By Lemma \ref{lem:omega-eq} we have the following.
\begin{lemma}
\label{lem:Gamma1}
We have
\[
\For\circ R^i\Gamma^{eq}
=R^i\Gamma\circ\For:
\Mod^{eq}(\CO_{\CB_\zeta})\to
\Mod(\BC)
\]
for any $i$, where $\For:\Comod^r(C_\zeta)\to\Mod(\BC)$ and $\For:\Mod^{eq}(\CO_{\CB_\zeta})\to\Mod(\CO_{\CB_\zeta})$ are forgetful functors.
\end{lemma}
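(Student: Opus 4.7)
The plan is to chain together the commutative diagrams we already have, using Lemma \ref{lem:omega-eq} as the bridge between the ``equivariant'' and ``non-equivariant'' worlds on the $\CM$-side, and Proposition \ref{prop:equivalence-O-eq}/Lemma \ref{lem:Gamma} as the passage between the sheaf-theoretic and $\CM$-side descriptions.

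First I would observe that from the very definitions of $\Phi$ (Lemma \ref{lem:Phi}) and $\Phi^{eq}$, the square
\[
\begin{CD}
\Mod^{eq}(\CO_{\CB_\zeta}) @>{\Phi^{eq}}>> \CM^{eq}_\zeta \\
@V{\For}VV @VV{\For}V \\
\Mod(\CO_{\CB_\zeta}) @>{\Phi}>> \CM_\zeta
\end{CD}
\]
commutes, because both functors $\Phi^{eq}$ and $\Phi$ are built out of the same construction $N\mapsto C_\zeta\otimes_{A_\zeta}N$, and $\Phi^{eq}$ simply keeps track of the extra right $C_\zeta$-comodule structure. Thus $\For\circ\Phi^{eq}=\Phi\circ\For$ as functors.

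Next, since $\Phi^{eq}$ is an equivalence (Proposition \ref{prop:equivalence-O-eq}), it is exact and sends injective objects to injective objects. Hence for any $M\in\Mod^{eq}(\CO_{\CB_\zeta})$ and any injective resolution $M\to I^\bullet$, the complex $\Phi^{eq}(I^\bullet)$ is an injective resolution of $\Phi^{eq}(M)$ in $\CM^{eq}_\zeta$. Using $\Gamma^{eq}=\Gamma^{eq}_\CM\circ\Phi^{eq}$ (see \eqref{eq:Phieq2}) this gives the identification $R^i\Gamma^{eq}=R^i\Gamma^{eq}_\CM\circ\Phi^{eq}$.

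Combining these ingredients, the chain of equalities proceeds as follows. For $M\in\Mod^{eq}(\CO_{\CB_\zeta})$ we compute
\begin{align*}
\For\bigl(R^i\Gamma^{eq}(M)\bigr)
&=\For\bigl(R^i\Gamma^{eq}_\CM(\Phi^{eq}(M))\bigr)\\
&=R^i\Gamma_\CM\bigl(\For(\Phi^{eq}(M))\bigr)\\
&=R^i\Gamma_\CM\bigl(\Phi(\For(M))\bigr)\\
&=R^i\Gamma(\For(M)),
\end{align*}
where the first equality is the one obtained above, the second is Lemma \ref{lem:omega-eq}, the third is the commutativity $\For\circ\Phi^{eq}=\Phi\circ\For$, and the last is Lemma \ref{lem:Gamma}. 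This chain is visibly functorial in $M$, so it yields the claimed identification of functors.

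The only non-routine point is the step $\For\circ R^i\Gamma^{eq}_\CM=R^i\Gamma_\CM\circ\For$, which we have already invoked as Lemma \ref{lem:omega-eq}; it rests on the fact, due to Backelin--Kremnizer, that the forgetful functor $\CM^{eq}_\zeta\to\CM_\zeta$ sends injectives to $\Gamma_\CM$-acyclic objects. Once that is granted, the present lemma is a purely formal consequence of the two equivalences $\Phi,\Phi^{eq}$ and their compatibility with forgetting the $C_\zeta$-comodule structure, so I do not anticipate any genuine obstacle beyond unwinding the definitions carefully.
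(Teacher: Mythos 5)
Your proof is correct and follows essentially the same route the paper intends: the paper simply asserts Lemma \ref{lem:Gamma1} as a consequence of Lemma \ref{lem:omega-eq}, and your write-up supplies the (routine) intermediate steps — namely the commutativity $\For\circ\Phi^{eq}=\Phi\circ\For$, the fact that the equivalence $\Phi^{eq}$ preserves injectives so that $R^i\Gamma^{eq}=R^i\Gamma^{eq}_\CM\circ\Phi^{eq}$, and the identification $R^i\Gamma=R^i\Gamma_\CM\circ\Phi$ from Lemma \ref{lem:Gamma}. No gap; this is exactly the derivation the paper has in mind.
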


\section{Reformulation of Conjecture \ref{conj2}}
\subsection{Adjoint action of $U_\zeta^L$ on $D'_{\zeta}$}
Define a left $U_\BF$-module structure of $E_\BF$ by
\begin{align*}
\ad(u)(P)&=\sum_{(u)}u_{(0)}P(Su_{(1)})
\qquad
(u\in U_\BF,\, P\in E_\BF).
\end{align*}
Then we have
\[
\ad(u)(P_1P_2)=\sum_{(u)}\ad(u_{(0)})(P_1)\ad(u_{(1)})(P_2)\qquad
(P_1, P_2\in E_\BF),
\]
\begin{align*}
\ad(u)(\varphi)&=u\cdot\varphi
\quad(\varphi\in A_\BF\subset E_\BF),\\
\ad(u)(v)&=\sum_{(u)}u_{(0)}v(Su_{(1)})
\quad(v\in U_\BF\subset E_\BF),\\
\ad(u)(e(\lambda))&=\varepsilon(u)e(\lambda)
\quad(\lambda\in\Lambda, e(\lambda)\in\BF[\Lambda]\subset E_\BF)
\end{align*}
for $u\in U_\BF$.
We see from \cite[Lemma 4.2]{TI} that 
this induces
a left $U_\BF$-module structure of $D'_\BF$.
Moreover, the $U_\BF$-module structures of $E_\BF$ and $D'_\BF$ induce $U_\BA^L$-module structures of $E_\BA$, $D'_\BA$,  $E_{\BA,\diamondsuit}$, $D'_{\BA,\diamondsuit}$, $E_{\BA,f}$, $D'_{\BA,f}$ by Lemma \ref{lem:ad} and Lemma \ref{lem:diamondsuit}.
Hence by specialization we obtain $U_\zeta^L$-module structures of $E_\zeta$, $D'_\zeta$,  $E_{\zeta,\diamondsuit}$, $D'_{\zeta,\diamondsuit}$, $E_{\zeta,f}$, $D'_{\zeta,f}$ also denoted by $\ad$.

\subsection{}
We will regard $E_{\zeta,f}, D'_{\zeta,f}\in\Mod_\Lambda(A_\zeta)$ as  objects of $\Mod_\Lambda^{eq}(A_\zeta)$
by the right $C_\zeta$-comodule structures induced from the left $U^L_\zeta$-module structures
\[
(u,P)\mapsto \ad(u)(P)
\qquad(u\in U_\zeta^L, P\in E_{\zeta,f} \text{ or } D'_{\zeta,f}).
\]
Then for
\[
\left(\Xi\circ\Phi^{eq}\right)(\omega^*D'_{\zeta,f})\in\Comod(C^{\leqq0}_\zeta)
\]
we have
\[
R^i\Gamma(\omega^*D'_{\zeta,f})
=R^i\Ind(\left(\Xi\circ\Phi^{eq}\right)(\omega^*D'_{\zeta,f}))
\]
by Lemma \ref{lem:omega-eq}, Lemma \ref{lem:Gamma1}, and \eqref{eq:Phieq2}.

Define a right $(U_{\zeta,\diamondsuit}\otimes\BC[\Lambda])$-module $V$ by
\[
V=(U_{\zeta,\diamondsuit}\otimes\BC[\Lambda])/\CI
\]
where
\[
\CI=
(\tilde{U}_\zeta^-\cap\Ker(\varepsilon))U_{\zeta,\diamondsuit}\BC[\Lambda]
+\sum_{\lambda\in\Lambda}(k_{2\lambda}-e({2\lambda}))U_{\zeta,\diamondsuit}\BC[\Lambda].
\]
By the triangular decomposition $\tilde{U}_\zeta^-\otimes U_{\zeta,\diamondsuit}^0\otimes U_\zeta^+\cong U_{\zeta,\diamondsuit}$ 
we have
\[
V\cong U_\zeta^+\otimes\BC[\Lambda]
\]
as a vector space.
Define a right action of $U_\zeta^{L,\leqq0}$ on $U_{\zeta,\diamondsuit}\otimes\BC[\Lambda]$ by 
\[
(u\otimes e(\lambda))\star v
=\ad(Sv)(u)\otimes e(\lambda)
\qquad(u\in U_{\zeta,\diamond}, \lambda\in\Lambda, v\in U_\zeta^{L\leqq0}).
\]
It induces a right action of $U_\zeta^{L,\leqq0}$ on $V$.
Moreover, we see easily that this right $U_\zeta^{L,\leqq0}$-module structure gives a left $C_\zeta^{\leqq0}$-comodule structure of $V$.
\begin{proposition}
\label{prop:RiInd}
We have
\[
\left(\Xi\circ\Phi^{eq}\right)(\omega^*D'_{\zeta,f})
\cong
V
\]
as a left $C_\zeta^{\leqq0}$-comodule.
\end{proposition}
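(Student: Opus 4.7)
Proof plan: The proof splits naturally into an identification of the functor $\Xi\circ\Phi^{eq}\circ\omega^{*}$ with a concrete tensor product functor, followed by a presentation-level computation of the image.

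First, combining the underlying rule $N\mapsto C_\zeta\otimes_{A_\zeta}N$ of Lemma \ref{lem:Phi} with Remark \ref{rem:fiber}, one verifies that for any $N\in\Mod^{eq}_\Lambda(A_\zeta)$ there is a natural isomorphism
\[
\Xi\bigl(\Phi^{eq}(\omega^{*}N)\bigr)\;\cong\;\BC\otimes_{A_\zeta}N
\]
of $C_\zeta^{\leqq0}$-comodules, where $\BC$ is an $A_\zeta$-module through the restriction of $\varepsilon\colon C_\zeta\to\BC$, and the $C_\zeta^{\leqq0}$-coaction is inherited from the right $C_\zeta$-coaction on $N$ together with the $\Lambda$-grading via $\res$ and the characters $\chi_\lambda^{\leqq0}$. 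It therefore suffices to construct a natural $C_\zeta^{\leqq0}$-comodule isomorphism $\BC\otimes_{A_\zeta}D'_{\zeta,f}\cong V$.

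Since $E_{\zeta,f}=A_\zeta\otimes U_{\zeta,f}\otimes\BC[\Lambda]$ is free as a left $A_\zeta$-module, the map $\varepsilon\otimes\id\otimes\id$ identifies $\BC\otimes_{A_\zeta}E_{\zeta,f}$ with $U_{\zeta,f}\otimes\BC[\Lambda]$; since $D'_{\zeta,f}=\Image(E_{\zeta,f}\to D'_\zeta)$, the space $\BC\otimes_{A_\zeta}D'_{\zeta,f}$ is the quotient of $U_{\zeta,f}\otimes\BC[\Lambda]$ by the image $K$ of $E_{\zeta,f}\cap\Ker(E_\zeta\to D'_\zeta)$. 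Consider the composite
\[
\phi\colon U_{\zeta,f}\otimes\BC[\Lambda]\hookrightarrow U_{\zeta,\diamondsuit}\otimes\BC[\Lambda]\twoheadrightarrow V.
\]
Using Lemma \ref{lem:localization} together with the relation $k_{2\lambda}\equiv e(2\lambda)\pmod\CI$, any class $x\otimes e(\mu)\in V$ with $x\in U^{+}_\zeta(\lambda)$ is realized as $\phi(k_{-2\lambda}x\otimes e(2\lambda+\mu))$ thanks to \eqref{eq:generating:zeta:3}, so $\phi$ is surjective, and the plan is to identify $K$ with $\ker\phi$.

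For the inclusion $K\subset\ker\phi$, one tracks the $\Omega'$-relations. For $\varphi\in A_\zeta(\lambda)_\xi$, the condition $\varphi\cdot f_i=0$ forces $\langle\varphi,y_p^{L}\rangle=\delta_{\beta_p,0}\,\varepsilon(\varphi)$, hence
\[
(\varepsilon\otimes\id\otimes\id)(\Omega'_1(\varphi))=\varepsilon(\varphi)\cdot 1\otimes 1,
\]
while
\[
(\varepsilon\otimes\id\otimes\id)(\Omega'_2(\varphi))=\varepsilon(\varphi)\,k_{2\xi}\otimes e(-2\lambda)+\sum_{\beta_p>0}\langle\varphi,Sx_p^{L}\rangle\,y_pk_{\beta_p}k_{2\xi}\otimes e(-2\lambda),
\]
and each summand of the second sum lies in $(\tilde U_\zeta^{-}\cap\Ker\varepsilon)\cdot U_{\zeta,\diamondsuit}\BC[\Lambda]\subset\CI$ because $y_pk_{\beta_p}\in\tilde U^{-}_{\zeta,-\beta_p}\cap\Ker\varepsilon$. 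Combined with $k_{2\xi}\equiv e(2\xi)\pmod\CI$, this shows $\phi$ annihilates the $\varepsilon$-image of every $A_\zeta\Omega'(\varphi)U_\zeta\BC[\Lambda]$ intersected with $E_{\zeta,f}$, proving $K\subset\ker\phi$. For the reverse inclusion, one exhausts the generators of $\CI\cap(U_{\zeta,f}\otimes\BC[\Lambda])$: the choice $\varphi\in A_\zeta(\lambda)_\lambda$ (with $\varepsilon(\varphi)=1$) yields the generator $k_{2\lambda}-e(2\lambda)$, while varying $\varphi\in A_\zeta(\lambda)_\xi$ with $\xi<\lambda$ and $\lambda\in\Lambda^{+}$ and invoking the non-degeneracy of the Drinfeld pairings \eqref{eq:Drinfeld-paring-zeta} together with \eqref{eq:generating:zeta:1}--\eqref{eq:generating:zeta:3} produces every $\tilde y\cdot ue(\mu)$ with $\tilde y\in\tilde U_\zeta^{-}\cap\Ker\varepsilon$ lying in $U_{\zeta,f}\otimes\BC[\Lambda]$.

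Finally, compatibility with the $C_\zeta^{\leqq0}$-coactions is a direct verification: on both sides the coaction arises from the $\ad$-action of $U_\zeta^{L}$ (on $D'_{\zeta,f}$ on the one hand, on $U_{\zeta,\diamondsuit}$ on the other), converted to a right $C_\zeta$-coaction and pushed to $C_\zeta^{\leqq0}$ via $\res\circ S^{-1}$; the defining formula $(u\otimes e(\lambda))\star v=\ad(Sv)(u)\otimes e(\lambda)$ for $V$ then matches term by term. The main obstacle will be the reverse inclusion $\ker\phi\subset K$: although the highest-weight case $\varphi\in A_\zeta(\lambda)_\lambda$ handles the $k_{2\lambda}-e(2\lambda)$ generators cleanly, realising all generators $\tilde y\in\tilde U_\zeta^{-}\cap\Ker\varepsilon$ requires a careful PBW-basis manipulation to match, at each weight $-\beta\in-Q^{+}$, the span of the vectors $\{y_pk_{\beta_p}k_{2\xi}\otimes e(-2\lambda)\}_p$ arising from a single $\lambda$ with the full space of such generators; triangulating in $\beta$ using \eqref{eq:generating:zeta:1}--\eqref{eq:generating:zeta:3} and the non-degeneracy of $\tau^{L}_\zeta$ should resolve this bookkeeping.
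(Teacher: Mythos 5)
Your overall reduction — identify $(\Xi\circ\Phi^{eq})(\omega^*D'_{\zeta,f})$ with $\BC\otimes_{A_\zeta}D'_{\zeta,f}$, then present this as a quotient of $U_{\zeta,f}\otimes\BC[\Lambda]$ — is the right skeleton, and your forward inclusion $K\subset\ker\phi$ is essentially the computation from Lemma~\ref{lem:key-calculation}. But you flag the reverse inclusion $\ker\phi\subset K$ as an ``obstacle'' to be resolved by PBW bookkeeping at each weight, and that is precisely where a generator-matching approach cannot be closed. The difficulty is not combinatorial: an arbitrary $u\in\CI\cap(U_{\zeta,f}\otimes\BC[\Lambda])$ is a priori expressed via $\Omega'$-relations whose coefficients live in $U_{\zeta,\diamondsuit}\otimes\BC[\Lambda]$ and not in the adjoint-finite part, so the naive presentation of $u$ as a sum $\sum\Omega'(\varphi_i)x_i$ does not stay inside $E_{\zeta,f}$. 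No amount of weight-by-weight matching against the Drinfeld pairing repairs this, because the issue is where the presentation lives, not whether the generators hit the right weight spaces.

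The paper resolves this with a localization argument you are missing. Step one: for highest-weight $\varphi\in A_\zeta(\lambda)_\lambda$ one shows $\Omega'_1(\varphi)\in A_\zeta U^+_\zeta(\lambda)$ (via $\varphi\cdot f_i^{((\lambda,\alpha_i^\vee)+1)}=0$ and the pairing), so $\Omega'(\varphi)k_{-2\lambda}$ lies in the kernel presentation inside $E_{\zeta,f}$, giving
\begin{equation*}
(k_{-2\lambda}-e(-2\lambda))\,U_{\zeta,f}\BC[\Lambda]\subset F(\CK)\qquad(\lambda\in\Lambda^+).
\end{equation*}
Step two: given any $u\in\CI$, write it in the $\diamondsuit$-presentation and post-multiply by $k_{-2\mu}$ with $\mu\in\Lambda^+$ chosen by Lemma~\ref{lem:localization} so the whole expression moves into $E_{\zeta,f}$; this yields $uk_{-2\mu}\in F(\CK)$. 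Step three: recover $u$ itself by
\begin{equation*}
u = e(2\mu)\bigl(e(-2\mu)-k_{-2\mu}\bigr)u + e(2\mu)k_{-2\mu}u,
\end{equation*}
both summands now lying in $F(\CK)$. Without steps two and three — post-multiplying by $k_{-2\mu}$ and then ``dividing back'' via the $k_{-2\lambda}-e(-2\lambda)$ relations already installed — the argument does not close. This is the genuine gap in your plan.

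Two smaller points. Remark~\ref{rem:fiber} gives $\Xi(M)\cong\BC\otimes_{C_\zeta}M$ only as vector spaces; the paper proves compatibility of the coactions in a separate, nontrivial final lemma (comparing $C_\zeta\otimes M$ with a twisted-trivial comodule $C_\zeta\otimes M^{triv}$), whereas you compress this to ``direct verification.'' And the paper first establishes $\BC\otimes_{A_\zeta}D'_{\zeta,\diamondsuit}\cong V$ and only then shows $\BC\otimes_{A_\zeta}D'_{\zeta,f}\to\BC\otimes_{A_\zeta}D'_{\zeta,\diamondsuit}$ is bijective; that intermediate stop isolates the kernel calculation in the presentation where it is transparent, and is what makes the localization trick in the $f$-step natural.
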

The proof is given in the next subsection.

It follows from Proposition \ref{prop:RiInd} that Conjecture \ref{conj2} is equivalent to the following conjecture.
\begin{conjecture}
\label{conj3}
Assume $\ell>h_G$.
We have
\[
\Ind(V)\cong 
U_{\zeta,f}\otimes_{Z_{Har}(U_\zeta)}\BC[\Lambda],
\]
and
\[
R^i\Ind(V)=0
\]
for $i\ne0$.
\end{conjecture}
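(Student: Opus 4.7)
The plan is to compute $\Ind(V)$ by decomposing $V$ into pieces amenable to the projection formula (Lemma \ref{lem:Ind-formula}) combined with quantum Borel-Weil-Bott vanishing, and then to match the resulting right $C_\zeta$-comodule with $U_{\zeta,f}\otimes_{Z_{Har}(U_\zeta)}\BC[\Lambda]$ via the Joseph-Letzter description of the adjoint-finite part of $U_\zeta$. The $\BC[\Lambda]$-factor in $V$ is a spectator for the right $U_\zeta^{L,\leqq0}$-action, so the crux is to compute the induction of the $U_\zeta^+$-factor with its $\ad(S\,\cdot\,)$-action.

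First, by \eqref{eq:generating:zeta:1} and \eqref{eq:generating:zeta:2}, the subspaces $V_\lambda\subset V$ corresponding to $U_\zeta^+(\lambda)\otimes\BC[\Lambda]$ form an exhausting increasing filtration of $V$ by sub-comodules (after checking $\ad(U_\zeta^{L,\leqq0})$-stability of each $U_\zeta^+(\lambda)$ via Lemma \ref{lemma:PhiPsi1}). The $\zeta$-analogue of Lemma \ref{lem:PhiPsiisom}(i), together with \eqref{eq:PhiPsiLzeta}, identifies the shifted subspace $k_{-2\lambda}U_\zeta^+(\lambda)$ with $L^*_{+,\zeta}(-\lambda)\otimes\BC^{\geqq0}_\lambda$ as a $U_\zeta^{L,\geqq0}$-module under the adjoint action. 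Passing to the right $U_\zeta^{L,\leqq0}$-action through $S$ and absorbing the $k_{-2\lambda}$-shift into the $\BC[\Lambda]$-factor via the relation $k_{2\lambda}=e(2\lambda)$ built into the definition of $V$, each $V_\lambda$ becomes, up to translation, of the form $\res L_{-,\zeta}(\lambda)\otimes\BC^{\leqq0}_{-\lambda}\otimes\BC[\Lambda]$.

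Second, the projection formula rewrites $R^i\Ind(V_\lambda)$ as $L_{-,\zeta}(\lambda)\otimes R^i\Ind(\BC^{\leqq0}_{-\lambda})\otimes\BC[\Lambda]$, where $\BC[\Lambda]$ enters as a direct sum of trivial $C_\zeta^{\leqq0}$-characters and therefore passes through $\Ind$ component-wise. Kempf-type vanishing at $\ell>h_G$ (the dominant case of quantum Borel-Weil-Bott) yields $R^i\Ind(\BC^{\leqq0}_{-\lambda})=0$ for $i>0$ and $\Ind(\BC^{\leqq0}_{-\lambda})\cong A_\zeta(\lambda)\cong L^*_{-,\zeta}(\lambda)$. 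Taking the colimit over $\lambda\in\Lambda^+$ produces $R^i\Ind(V)=0$ for $i>0$ together with an explicit description of $\Ind(V)$ built from pieces $L_{-,\zeta}(\lambda)\otimes L^*_{-,\zeta}(\lambda)\otimes\BC[\Lambda]$, glued by appropriate $\BC[2\Lambda]$-identifications. Matching this with $U_{\zeta,f}\otimes_{Z_{Har}(U_\zeta)}\BC[\Lambda]$ then uses a quantum Peter-Weyl-type decomposition $U_{\zeta,f}\cong\bigoplus_\lambda L_{-,\zeta}(\lambda)\otimes L^*_{-,\zeta}(\lambda)$ as $\ad(U_\zeta^L)$-modules, consistent with Joseph-Letzter's $U_{\zeta,f}=\sum_\lambda\ad(U_\zeta)(k_{-2\lambda})$, together with the Harish-Chandra identification $Z_{Har}(U_\zeta)\cong\BC[2\Lambda]^{W\circ}$ under $k_{2\lambda}\leftrightarrow e(2\lambda)$, which turns the assembly above into precisely the tensor product $-\otimes_{Z_{Har}(U_\zeta)}\BC[\Lambda]$.

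The principal obstacle is quantum Borel-Weil-Bott vanishing at a root of unity with the sharp bound $\ell>h_G$: the classical ($q$ generic) and positive-characteristic statements are well known, but their transfer to the root-of-unity setting is delicate, and this is precisely where the Coxeter-number hypothesis is needed. As the authors observe, essentially this vanishing-and-identification has been proved in Backelin-Kremnizer \cite{BK2} for all but finitely many $\ell$, so a pragmatic route is to match the $V$-picture above with the induction input in \cite{BK2} and invoke their theorem, reducing Conjecture \ref{conj3} to a compatibility check between two explicit descriptions of the induced comodule.
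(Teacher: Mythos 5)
This statement is not proved in the paper: it is explicitly labelled a \emph{conjecture}, and the introduction says ``we do not know how to prove \eqref{eq:RGammaD} at present, and hence we can only state it as a conjecture.'' Section 5 establishes Proposition \ref{prop:RiInd}, which reformulates Conjecture \ref{conj2} as Conjecture \ref{conj3}, and the remark following Conjecture \ref{conj3} gives only a further reformulation and observes that the $q=1$ analogue is provable geometrically. So there is no proof in the paper to compare against, and a correct answer to the assigned problem would have been to note the statement is open.

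Your proposal is in effect a plausibility sketch rather than a proof, and to your credit the final paragraph concedes exactly this. But it is worth being precise about where the gaps are, beyond the quantum Kempf vanishing you flag. First, the filtered pieces $V_\lambda$ built from $U^+_\zeta(\lambda)\otimes\BC[\Lambda]$ form an exhaustive increasing family, not a direct-sum decomposition, and $R\Ind$ does not commute with a filtered colimit without further justification (for example, exactness of the colimit and some uniformity in the vanishing), so the passage from the $V_\lambda$-computation to $R\Ind(V)$ is not automatic. Second, and more seriously, your matching step appeals to a quantum Peter--Weyl decomposition $U_{\zeta,f}\cong\bigoplus_\lambda L_{-,\zeta}(\lambda)\otimes L^*_{-,\zeta}(\lambda)$. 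The Joseph--Letzter separation-of-variables theorem \eqref{eq:Uf} used in the paper holds over $\BF$ at generic $q$; at a root of unity the analogous decomposition of $U_{\zeta,f}$ is not a clean direct sum indexed by $\Lambda^+$ because Weyl modules $L^*_{\pm,\zeta}(\mp\lambda)$ are typically not simple and the relevant filtration has nontrivial extensions. Identifying the assembled object with $U_{\zeta,f}\otimes_{Z_{Har}(U_\zeta)}\BC[\Lambda]$ therefore requires a genuine argument, not just a dimension or character count. Third, the stability of $U^+_\zeta(\lambda)$ under the right $\ad(U_\zeta^{L,\leqq0})$-action that defines the comodule structure on $V$ needs verification; Lemma \ref{lem:PhiPsiisom} and Lemma \ref{lem:lambda-finite} control the adjoint action of $U_\zeta^{L,\geqq0}$, not $U_\zeta^{L,\leqq0}$, and the stability you assert rests on the relation $k_{2\mu}=e(2\mu)$ in $V$ in a way that should be spelled out. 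Finally, invoking Backelin--Kremnizer \cite{BK2} only gives the result for all but finitely many $\ell$, which is weaker than the conjectured sharp bound $\ell>h_G$. In short, the proposal restates the conjecture in language adjacent to \cite{BK2} but does not close any of the gaps that make it a conjecture.
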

\begin{remark}
{\rm
We can show that 
\[
U_{\zeta,f}\cong  (C_\zeta)_{\ad},\qquad
V\cong {}_{\ad}(C_\zeta^{\leqq0})\otimes_{\BC[2\Lambda]}\BC[\Lambda],
\]
where 
$(C_\zeta)_{\ad}$ (resp.\ ${}_{\ad}(C_\zeta^{\leqq0})$) 
is given by the right (resp.\ left) adjoint coaction of 
$C_\zeta$ (resp.\ $C_\zeta^{\leqq0}$) 
on itself.
Hence Conjecture \ref{conj3} is equivalent to 
\[
R\Ind({}_{\ad}(C_\zeta^{\leqq0}))
\cong
(C_\zeta)_{\ad}\otimes_{\BC[2\Lambda]^W}\BC[2\Lambda].
\]
The corresponding statement for $q=1$ is 
\[
R\Ind({}_{\ad}\BC[B^-])\cong{\BC[G]}_{\ad}\otimes_{\BC[H/W]}\BC[H].
\]
We can prove this by a geometric method.
}
\end{remark}

\begin{remark}\footnote{this remark is added at the editor's request.}
\label{rem:BK}
{\rm
A proof of Conjecture \ref{conj3}, when $\ell$ is a prime greater than the Coxeter number,  is given in Backelin-Kremnizer \cite{BK2}; however, in a more recent paper \cite{BK3}
they admit that there are gaps in \cite{BK2}
(see 1.1.2 of \cite[Version 3]{BK3}), and propose different proofs. 
But it is likely that there still remain problems in the new proofs given in \cite{BK3} as explained below.

The proof in \cite[Version 1, Version 2]{BK3} is wrong since all positive roots are assumed there to be dominant (see the  proof of Theorem 2.1 in \cite[Version 2]{BK3}).

Another proof given in \cite[Version 3]{BK3} also has problems.
In Step b)  of the proof of \cite[Version 3, Theorem 2.2.1]{BK3}, the authors compare certain weight multiplicities $a_{q,\mu}$ and $b_{q,\mu}$.
But since those multiplicities are infinite, the argument there should be modified using multplicities as $U_q$-modules.
Let us assume for simplicity that $q$ is generic and 
try to modify the original argument 
by replacing $a_{q,\mu}, b_{q,\mu}, b'_{q,\mu}$ with their counterparts as multiplicities of  $U_q$-modules.
This even fails since $a_{1,\mu}$ (resp. $b'_{1,\mu}$) is the dimension of the zero weight space of the irreducible modules (resp. Verma module) with highest weight $\mu$.
We also point out that the reason for $U_q^\lambda$ to be an integral domain is not given
in Step a).

Note that the arguments in the proof of \cite[Version 3, Theorem 2.2.1]{BK3}
are partially similar to those in the earlier manuscripts \cite[Proposition 4.8]{BK1}, \cite[Proposition 3.25]{BK2}.
The main difference is that 
\cite[Version 3]{BK3} relies on a $B_q$-stable filtration with one-dimensional subquotients instead of the Joseph-Letzter filtration used in \cite{BK1, BK2}.
For us the original argument in \cite{BK1, BK2} for generic $q$ using the Joseph-Letzter filtration
is not comprehensible either.
In the notation of the proof of \cite[Proposition 4.8]{BK1} the validity of the formula 
$m_j(1)=\tilde{n}_j(1)$ is not clear to us since the Joseph-Letzter filtration
does not induce at $q=1$ 
the ordinary filtration for enveloping algebras and differential operators in general.
}
\end{remark}

\subsection{}
We will give a proof of Proposition \ref{prop:RiInd} in the rest of this paper.
By Remark \ref{rem:fiber}.
we have
\[
\left(\Xi\circ\Phi^{eq}\right)(\omega^*D'_{\zeta,f})
\cong
\BC\otimes_{A_\zeta}D'_{\zeta,f}
\]
as a vector space, where $A_\zeta\to\BC$ is given by $\varepsilon$.
Note that 
\[
\BC\otimes_{A_\zeta} E_{\zeta,\diamondsuit}\cong U_{\zeta,\diamondsuit}\otimes\BC[\Lambda].
\]
We first show the following.
\begin{lemma}
\label{lem:key-calculation}
We have
\[
\BC\otimes_{A_\zeta} D'_{\zeta,\diamondsuit}\cong 
V.
\]
\end{lemma}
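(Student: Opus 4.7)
The plan is to apply the right-exact functor $\BC\otimes_{A_\zeta}(-)$, with $\BC$ viewed as an $A_\zeta$-module via $\varepsilon$, to the defining surjection from \eqref{eq:D-prime}. Since $\BC\otimes_{A_\zeta}E_{\zeta,\diamondsuit}=U_{\zeta,\diamondsuit}\otimes\BC[\Lambda]$, it suffices to identify the image $J$ of the relation ideal $\sum_\varphi A_\zeta\Omega'(\varphi)U_{\zeta,\diamondsuit}\BC[\Lambda]$ with $\CI$. Left multiplication by $\psi\in A_\zeta$ contributes only the scalar $\varepsilon(\psi)$, so $J$ is the right $U_{\zeta,\diamondsuit}\BC[\Lambda]$-submodule generated by
\[
\bar\Omega'(\varphi)=\sum_p\varphi(y_p^L)x_p-\sum_p\varphi(Sx_p^L)\tilde y_p\,k_{2\xi}e(-2\lambda)\qquad(\varphi\in A_\zeta(\lambda)_\xi).
\]

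Realizing $\varphi$ as a matrix coefficient via \eqref{eq:AcongL}, I would perform a weight analysis. Since $S$ preserves weights (as $S(k_\mu)=k_{-\mu}$ is of weight zero), $y_p^L$ and $Sx_p^L$ have weights $-\beta_p$ and $+\beta_p$ respectively. Standard weight considerations in $L_{-,\zeta}(\lambda)$ then show that $\varphi(y_p^L)\ne0$ forces $\beta_p=\xi-\lambda\in Q^+\cap(-Q^+)=\{0\}$, hence $\xi=\lambda$ and $\beta_p=0$, whereas $\varphi(Sx_p^L)\ne0$ only forces the milder $\beta_p=\lambda-\xi\in Q^+$. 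Setting $\gamma:=\lambda-\xi$, this yields
\[
\bar\Omega'(\varphi)=\begin{cases}\varphi(1)(1-k_{2\lambda}e(-2\lambda))&\text{if }\xi=\lambda,\\ -Y_\varphi\,k_{2\xi}e(-2\lambda)&\text{if }\xi<\lambda,\end{cases}
\]
where $Y_\varphi:=\sum_{p:\beta_p=\gamma}\varphi(Sx_p^L)\tilde y_p\in\tilde U^-_{\zeta,-\gamma}$.

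The crux of the argument is the identification $Y_\varphi=\Psi_\lambda^{-1}(\varphi)$ under \eqref{eq:AcongL} and \eqref{eq:PhiPsiLzeta}. Using the defining property of $\Psi_\lambda$ from Lemma~\ref{lemma:PhiPsi2}\,(ii) together with ${}^L\tau_\zeta(x_p^L,\tilde y_q)={}^L\tau_\zeta(x_p^L,y_q)=\delta_{pq}$ (the factor $k_{\beta_q}$ in $\tilde y_q$ contributing trivially by Lemma~\ref{lem:Drinfeld paring}\,(iii), since $x_p^L$ carries no $k$-component), a direct computation shows $\varphi(Sx_p^L)$ is exactly the $\tilde y_p$-coefficient of $\Psi_\lambda^{-1}(\varphi)\in\tilde U^-_\zeta(\lambda)$. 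Since $\tilde U^-_\zeta(\lambda)$ exhausts $\tilde U^-_\zeta$ as $\lambda$ grows, by \eqref{eq:generating:zeta:2}, the family $\{Y_\varphi\}$ spans $\tilde U^-_{\zeta,-\gamma}$ for every $\gamma\in Q^+\setminus\{0\}$.

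Finally I would combine these facts to establish $J=\CI$. The highest-weight relations $\varphi(1)(1-k_{2\lambda}e(-2\lambda))$ for $\lambda\in\Lambda^+$ generate $(k_{2\lambda}-e(2\lambda))U_{\zeta,\diamondsuit}\BC[\Lambda]$ for all $\lambda\in\Lambda$ via the identity $k_{-2\mu}-e(-2\mu)=-k_{-2\mu}e(-2\mu)(k_{2\mu}-e(2\mu))$. Modulo these, the relation $-Y_\varphi k_{2\xi}e(-2\lambda)$ for $\xi<\lambda$ reduces to $-Y_\varphi e(-2\gamma)$, and right-multiplication by $e(2\gamma)$ puts $Y_\varphi$ itself into $J$; combined with the surjectivity of $\varphi\mapsto Y_\varphi$, this yields $(\tilde U^-_\zeta\cap\Ker\varepsilon)U_{\zeta,\diamondsuit}\BC[\Lambda]\subset J$, hence $\CI\subset J$. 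The reverse inclusion is immediate since each $\bar\Omega'(\varphi)$ manifestly lies in $\CI$. The main obstacle is the structural identification $Y_\varphi=\Psi_\lambda^{-1}(\varphi)$: tracking the $S$-twist, the $k_{\beta_p}$-factor in $\tilde y_p$, and the matrix-coefficient realization of $A_\zeta(\lambda)\cong L^*_{-,\zeta}(\lambda)$ requires careful bookkeeping, but once settled, the surjectivity onto $\tilde U^-_\zeta$ via \eqref{eq:generating:zeta:2} makes the remainder of the argument formal.
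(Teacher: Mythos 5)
Your proposal follows essentially the same route as the paper: apply $\BC\otimes_{A_\zeta}(-)$ to the presentation \eqref{eq:D-prime}, use a weight argument to show that $1\otimes\Omega'_1(\varphi)$ contributes only the scalar $\varepsilon(\varphi)$ when $\xi=\lambda$ (and zero otherwise), identify the $\Omega'_2$-contribution with an element of $\tilde U^-_{\zeta,-\gamma}$ via the pairing of Lemma~\ref{lemma:PhiPsi2}, and then invoke \eqref{eq:generating:zeta:2} to conclude that these contributions exhaust $\tilde U^-_\zeta\cap\Ker\varepsilon$ as $\lambda$ grows. The one place where your statement is not quite right is the asserted identity $Y_\varphi=\Psi_\lambda^{-1}(\varphi)$: the paper's explicit computation yields instead $\zeta^{-(\gamma,\lambda)}\,(S^{-1}v)k_\gamma$ with $v=\tilde\Psi_\lambda^{-1}(\varphi)$, i.e.\ there is an extra antipode twist, a $k_\gamma$ factor, and a scalar $\zeta^{-(\gamma,\lambda)}$ that your one-line justification via dual bases does not account for. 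This discrepancy is harmless for the conclusion, since $v\mapsto\zeta^{-(\gamma,\lambda)}(S^{-1}v)k_\gamma$ is still a linear bijection of each weight space $\tilde U^-_{\zeta,-\gamma}$, so the spanning argument goes through unchanged; but the precise bookkeeping you correctly flag as the main obstacle is exactly where the extra twists appear, and stating $Y_\varphi=\Psi_\lambda^{-1}(\varphi)$ as an identity is false.
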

\begin{proof}
By \eqref{eq:D-prime}
we obtain
\[
\BC\otimes_{A_\zeta}D'_{\zeta,\diamondsuit}
\cong
(U_{\zeta,\diamondsuit}\otimes\BC[\Lambda])/
\sum_{\varphi\in A_\zeta}(1\otimes\Omega'(\varphi))(U_{\zeta,\diamondsuit}\otimes\BC[\Lambda]),
\]
where $1\otimes\Omega'(\varphi)$ is the image of $\Omega'(\varphi)$ in $\BC\otimes_{A_\zeta}E_{\zeta,\diamondsuit}=U_{\zeta,\diamondsuit}\otimes\BC[\Lambda]$.
Note that $\varepsilon(A_\zeta(\lambda)_\xi)=\{0\}$ for $\lambda\in\Lambda^+$, $\xi\in\Lambda$ with $\lambda\ne\xi$, and $\varepsilon(A_\zeta(\lambda)_\lambda)=\BC$ for $\lambda\in\Lambda^+$.
Hence for $\varphi\in A_\zeta(\lambda)_\xi$ with $\lambda\in\Lambda^+$, $\xi\in\Lambda$ we have
\[
1\otimes\Omega'_1(\varphi)=
\begin{cases}
0\qquad&(\lambda\ne\xi)\\
\varepsilon(\varphi)\qquad&(\lambda=\xi).
\end{cases}
\]
Let us also compute $1\otimes \Omega'_2(\varphi)$.
Let
\[
\tilde{\Psi}_\lambda:\tilde{U}^-_\zeta(\lambda)\to 
A_\zeta(\lambda)
\]
be the composite of the linear isomorphism 
$\Psi_\lambda:\tilde{U}^-_\zeta(\lambda)
\to L^*_{-,\zeta}(\lambda)$
(see \eqref{eq:PhiPsiLzeta})
and an isomorphism $f:L^*_{-,\zeta}(\lambda)\to A_\zeta(\lambda)$ of $U_\zeta^L$-modules.
We have $\tilde{\Psi}_\lambda(\tilde{U}^-_\zeta(\lambda)_{-(\lambda-\xi)})= A_\zeta(\lambda)_\xi$ for any $\xi\in \Lambda$.
Hence we may assume $\varepsilon=\varepsilon\circ\tilde{\Psi}_\lambda$ on $\tilde{U}^-_\zeta(\lambda)$. 
Let $\varphi\in A_\zeta(\lambda)_\xi$ and take 
$v\in \tilde{U}^-_\zeta(\lambda)_{-(\lambda-\xi)}$ satisfying $\tilde{\Psi}_\lambda(v)=\varphi$.
Then we have
\begin{align*}
&\sum_p(Sx_p^L)\cdot\varphi\otimes y_pk_{\beta_p}
\\
=&\sum_pf((Sx_p^L)\cdot\Psi_\lambda(v))\otimes y_pk_{\beta_p}
\\
=&
\sum_p
\zeta^{-(\beta_p,\xi)}
f((Sx_p^L)k_{\beta_p}\cdot\Psi_\lambda(v))\otimes y_pk_{\beta_p}
\\
=&
\sum_{p,(v)}
\zeta^{-(\beta_p,\xi)}
f({}^L\tau_\zeta((Sx_p^L)k_{\beta_p},v_{(0)})
\Psi_\lambda(v_{(1)}))\otimes y_pk_{\beta_p}
\\
=&
\sum_{p,(v)}
\zeta^{-(\beta_p,\xi)}
{}^L\tau_\zeta((Sx_p^L)k_{\beta_p},v_{(0)})
\tilde{\Psi}_\lambda(v_{(1)})\otimes y_pk_{\beta_p},
\end{align*}
and hence
\begin{align*}
&
1\otimes \Omega'_2(\varphi)\\
=&
\sum_p
\varepsilon((Sx_p^L)\cdot\varphi)y_pk_{\beta_p}k_{2\xi} e(-2\lambda)
\\
=&
\sum_{p,(v)}
\zeta^{-(\beta_p,\xi)}
{}^L\tau_\zeta((Sx_p^L)k_{\beta_p},v_{(0)})
\varepsilon(v_{(1)})y_pk_{\beta_p}k_{2\xi} e(-2\lambda)
\\
=&
\sum_{p}
\zeta^{-(\beta_p,\xi)}
{}^L\tau_\zeta((Sx_p^L)k_{\beta_p},v)
y_pk_{\beta_p}k_{2\xi} e(-2\lambda)
\\
=&
\sum_{p}
\zeta^{-(\beta_p,\xi)}
{}^L\tau_\zeta(k_{-\beta_p}x_p^L,S^{-1}v)
y_pk_{\beta_p}k_{2\xi} e(-2\lambda)
\\
=&
\sum_{p}
\zeta^{-(\beta_p,\xi)-(\beta_p,\beta_p)}
{}^L\tau_\zeta(x_p^L,S^{-1}v)
y_pk_{\beta_p}k_{2\xi} e(-2\lambda)
\\
=&
\sum_{p}
\zeta^{-(\lambda-\xi,\lambda)}
{}^L\tau_\zeta(x_p^L,S^{-1}v)
y_pk_{\lambda-\xi}k_{2\xi} e(-2\lambda)
\\
=&
\zeta^{-(\lambda-\xi,\lambda)}
(S^{-1}v)k_{\lambda-\xi}k_{2\xi} e(-2\lambda)
\end{align*}
(note $(S^{-1}v)k_{\lambda-\xi}\in \tilde{U}^-_\zeta(\lambda)_{-(\lambda-\xi)}$).
It follows that
\[
1\otimes\Omega'(\varphi)=
\begin{cases}
-\zeta^{-(\lambda-\xi,\lambda)}
(S^{-1}v)k_{\lambda-\xi}k_{2\xi} e(-2\lambda)\qquad
&(\lambda\ne\xi)\\
\varepsilon(\varphi)(1-k_{2\lambda} e(-2\lambda))\qquad&(\lambda=\xi).
\end{cases}
\]
Hence we have
\begin{align*}
&\sum_{\lambda\in\Lambda^+,\atop \gamma\in Q^+}
\sum_{\varphi\in A_\zeta(\lambda)_{\lambda-\gamma}}
(1\otimes\Omega'(\varphi))(U_{\zeta,\diamondsuit}\otimes\BC[\Lambda])\\
=&
\sum_{\lambda\in\Lambda^+,\atop \gamma\in Q^+\setminus\{0\}}
\tilde{U}_\zeta^-(\lambda)_{-\gamma}
(U_{\zeta,\diamondsuit}\otimes\BC[\Lambda])
+\sum_{\lambda\in\Lambda^+}(1-k_{2\lambda} e(-2\lambda))(U_{\zeta,\diamondsuit}\otimes\BC[\Lambda])
\\
=&
(\tilde{U}_\zeta^-\cap\Ker(\varepsilon))
(U_{\zeta,\diamondsuit}\otimes\BC[\Lambda])
+\sum_{\lambda\in\Lambda}(k_{2\lambda}-e(2\lambda))(U_{\zeta,\diamondsuit}\otimes\BC[\Lambda])
\end{align*}
by \eqref{eq:generating:zeta:2}.
\end{proof}

\begin{lemma}
\label{lem:key-calculation2}
We have
\[
\BC\otimes_{A_\zeta} D'_{\zeta,f}\cong 
V.
\]
\end{lemma}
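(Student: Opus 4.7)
The plan is to reduce the assertion to Lemma \ref{lem:key-calculation} by showing that the natural map
\[
\alpha\colon \BC\otimes_{A_\zeta}D'_{\zeta,f}\longrightarrow \BC\otimes_{A_\zeta}D'_{\zeta,\diamondsuit}\cong V,
\]
induced by the inclusion $E_{\zeta,f}\subseteq E_{\zeta,\diamondsuit}$, is an isomorphism. The essential input is that $U_{\zeta,\diamondsuit}$ is the right Ore localization of $U_{\zeta,f}$ at $\{k_{-2\lambda}\mid\lambda\in\Lambda^+\}$, together with the explicit computation of $\BC\otimes_{A_\zeta}D'_{\zeta,\diamondsuit}$ already carried out in Lemma \ref{lem:key-calculation}.

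I would first verify the ``$D'$-level'' Ore identity $D'_{\zeta,\diamondsuit}\cong D'_{\zeta,f}\otimes_{U_{\zeta,f}}U_{\zeta,\diamondsuit}$. The nontrivial direction amounts to showing that every element of $\sum_{\varphi}A_\zeta\Omega'(\varphi)U_{\zeta,\diamondsuit}\BC[\Lambda]$ lies in $(E_{\zeta,f}\cap\sum_\varphi A_\zeta\Omega'(\varphi)U_\zeta\BC[\Lambda])\cdot U_{\zeta,\diamondsuit}$; this follows from \eqref{eq:generating:zeta:3} combined with Lemma \ref{lem:localization}, which together imply that $\Omega'(\varphi)k_{-2\nu}\in E_{\zeta,f}$ for $\nu\in\Lambda^+$ sufficiently large, allowing every $\Omega'(\varphi)u$ to be rewritten as $\bigl(\Omega'(\varphi)k_{-2\nu}\bigr)\cdot(k_{2\nu}u)$. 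Applying $\BC\otimes_{A_\zeta}(-)$ then presents $\alpha$ as the Ore localization map
\[
\BC\otimes_{A_\zeta}D'_{\zeta,f}\longrightarrow \bigl(\BC\otimes_{A_\zeta}D'_{\zeta,f}\bigr)\otimes_{U_{\zeta,f}}U_{\zeta,\diamondsuit}=V.
\]
Surjectivity of $\alpha$ is then immediate: given $x\otimes e(\lambda)\in V\cong U_\zeta^+\otimes \BC[\Lambda]$ with $x\in U_\zeta^+(\mu)$ for some $\mu\in\Lambda^+$ (possible by \eqref{eq:generating:zeta:2}), the element $k_{-2\mu}x\otimes e(\lambda+2\mu)\in U_{\zeta,f}\otimes\BC[\Lambda]$ (well defined by \eqref{eq:generating:zeta:3}) lifts it, because $k_{-2\mu}\equiv e(-2\mu)$ inside $V$.

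For injectivity, the strategy is to show that each $k_{-2\lambda}$ ($\lambda\in\Lambda^+$) already acts invertibly on $\BC\otimes_{A_\zeta}D'_{\zeta,f}$, so that the Ore localization map collapses to the identity. Let $\varphi_\lambda\in A_\zeta(\lambda)_\lambda$ be a highest weight vector with $\varepsilon(\varphi_\lambda)=1$, and let $\nu'\in\Lambda^+$ satisfy $(\nu',\alpha_i^\vee)\geq \Ht(\lambda-w_0\lambda)$ for all $i\in I$; then by the proof of Lemma \ref{lem:key-calculation}, $\Omega'(\varphi_\lambda)k_{-2(\lambda+\nu')}$ lies in $E_{\zeta,f}$ and its image in $U_{\zeta,f}\otimes\BC[\Lambda]$ equals
\[
\bigl(1-k_{2\lambda}e(-2\lambda)\bigr)k_{-2(\lambda+\nu')}=k_{-2\nu'}\bigl(k_{-2\lambda}-e(-2\lambda)\bigr).
\]
This yields the relation $k_{-2\nu'}\bigl(k_{-2\lambda}-e(-2\lambda)\bigr)\equiv 0$ in $\BC\otimes_{A_\zeta}D'_{\zeta,f}$. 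By combining such relations for varying $\lambda$ and $\nu'$ and using the invertibility of the $\BC[\Lambda]$-action generated by $e(\pm 2\nu')$, one deduces $k_{-2\lambda}\equiv e(-2\lambda)$ in $\BC\otimes_{A_\zeta}D'_{\zeta,f}$ for every $\lambda\in\Lambda^+$. The main obstacle lies precisely in this last passage: transferring the ``twisted'' vanishing $k_{-2\nu'}(k_{-2\lambda}-e(-2\lambda))\equiv 0$, valid only once $\nu'$ is large relative to $\lambda$, into the bare identity $k_{-2\lambda}\equiv e(-2\lambda)$ for every $\lambda\in\Lambda^+$.
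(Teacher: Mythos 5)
Your global strategy — passing to $D'_{\zeta,\diamondsuit}$ via the Ore localization $U_{\zeta,f}\hookrightarrow U_{\zeta,\diamondsuit}$, using Lemma~\ref{lem:key-calculation}, and proving the comparison map is an isomorphism — is the same as the paper's, and your surjectivity argument is essentially identical. The problem, which you yourself flag, is the injectivity step, and it is a genuine gap rather than a loose end.

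Your plan is to show that $k_{-2\lambda}$ already acts invertibly on $\BC\otimes_{A_\zeta}D'_{\zeta,f}$ so that nothing is killed by localization. But the class of relations you extract, namely $k_{-2\nu'}\bigl(k_{-2\lambda}-e(-2\lambda)\bigr)\equiv 0$ with $\nu'$ required to be large \emph{relative to} $\lambda$, is exactly the class of relations that automatically hold in any module in which the $k_{-2\lambda}$ become invertible after localization; they are indistinguishable from a situation where the localization map has a nonzero kernel. Concretely, every attempt to strip off the left factor $k_{-2\nu'}$ requires invoking another relation $k_{-2\nu''}\bigl(k_{-2\nu'}-e(-2\nu')\bigr)\equiv 0$ with a still larger $\nu''$, and this descent never terminates. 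So your relations give no information about the kernel.

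What the paper supplies, and what you are missing, is the strictly stronger containment \eqref{eq:claim1}: for $\varphi\in A_\zeta(\lambda)_\lambda$ one has $\Omega_1'(\varphi)=\sum_p(y_p^L\cdot\varphi)x_p\in A_\zeta\, U_\zeta^+(\lambda)$, i.e.\ the $U_\zeta^+$-factors that actually occur lie in the specific subspace $U_\zeta^+(\lambda)$ attached to the \emph{same} dominant weight $\lambda$. This is not a height-bound argument; it is proved by evaluating the Drinfeld pairing against $\overline{u f_i^{((\lambda,\alpha_i^\vee)+1)}}$ and using that $\varphi$ is killed by these elements, so that $\Phi_{-\lambda}$ of the surviving $x_p$ lands in $L^*_{+,\zeta}(-\lambda)$. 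With \eqref{eq:claim1}, one gets $\Omega'(\varphi)k_{-2\lambda}\in E_{\zeta,f}$ already (no auxiliary $\nu'$), hence the untwisted inclusion
\[
(k_{-2\lambda}-e(-2\lambda))\,U_{\zeta,f}\BC[\Lambda]\subset F(\CK)\qquad(\lambda\in\Lambda^+).
\]
Granted this, the paper finishes by a different and non-circular manoeuvre: instead of arguing invertibility of $k_{-2\lambda}$, it takes $u\in\CI\cap\bigl(U_{\zeta,f}\otimes\BC[\Lambda]\bigr)$, finds $\mu\in\Lambda^+$ with $uk_{-2\mu}\in F(\CK)$ (this is your Ore observation, and it is fine), converts it to $k_{-2\mu}u\in F(\CK)$ using a weight twist, and then writes
\[
u = e(2\mu)\bigl(e(-2\mu)-k_{-2\mu}\bigr)u + e(2\mu)k_{-2\mu}u,
\]
where both summands lie in $F(\CK)$: the first by the untwisted inclusion above, the second by the Ore step. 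To repair your proof you would need to establish \eqref{eq:claim1} (or an equivalent refinement of the containment $\Omega'(\varphi_\lambda)k_{-2\mu}\in E_{\zeta,f}$ down to $\mu=\lambda$) and then replace the invertibility argument by this splitting of $u$; as written, the final passage does not go through.
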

\begin{proof}
We need to show that the canonical homomorphism
$\BC\otimes_{A_\zeta} D'_{\zeta,f}
\to
\BC\otimes_{A_\zeta} D'_{\zeta,\diamondsuit}$
is bijective.
The surjectivity is a consequence of 
\eqref{eq:generating:zeta:2}, \eqref{eq:generating:zeta:3}.
Let us give a proof of the injectivity.
Set
\[
\CK
=
A_\zeta U_{\zeta, f}\BC[\Lambda]
\cap
\sum_{\varphi\in A_\zeta}
A_\zeta\Omega'(\varphi)U_{\zeta,\diamondsuit}\BC[\Lambda]
\subset
A_\zeta \otimes U_{\zeta, f}\otimes \BC[\Lambda].
\]
Then it is sufficient to show that the natural map
\[
\BC\otimes_{A_\zeta}((A_\zeta \otimes U_{\zeta, f}\otimes \BC[\Lambda])/\CK)
\to
(U_{\zeta, \diamondsuit}\otimes\BC[\Lambda])/\CI
\]
is injective.
Let $F:A_\zeta \otimes U_{\zeta, f}\otimes \BC[\Lambda]
\to
U_{\zeta, \diamondsuit}\otimes\BC[\Lambda]$
be the natural map.
Then it is sufficient to show 
\begin{equation}
\label{eq:claim}
\CI\cap (U_{\zeta,f}\otimes\BC[\Lambda])\subset F(\CK).
\end{equation}
Indeed, assume that \eqref{eq:claim} holds.
Denote by 
\begin{gather*}
p:A_\zeta \otimes U_{\zeta, f}\otimes \BC[\Lambda]
\to
\BC\otimes_{A_\zeta}((A_\zeta \otimes U_{\zeta, f}\otimes \BC[\Lambda])/\CK),\\
\pi:U_{\zeta, \diamondsuit}\otimes\BC[\Lambda]
\to
(U_{\zeta, \diamondsuit}\otimes\BC[\Lambda])/\CI
\end{gather*}
the natural maps.
We have to show $\Ker(\pi\circ F)\subset\Ker(p)$.
Take $x\in\Ker(\pi\circ F)$.
Then $F(x)\in\CI\cap(U_{\zeta,f}\otimes\BC[\Lambda])$.
Hence by \eqref{eq:claim} there exists some $v\in\CK$ such that $F(x)=F(v)$.
Then $p(x)=p(x-v)+p(v)=p(x-v)$.
Hence we may assume that $F(x)=0$ from the beginning.
Note that $p$ factors through 
\[
p':A_\zeta \otimes U_{\zeta, f}\otimes \BC[\Lambda]
\to
\BC\otimes_{A_\zeta}(A_\zeta \otimes U_{\zeta, f}\otimes \BC[\Lambda])
(=U_{\zeta, f}\otimes \BC[\Lambda]).
\]
By $F(x)=0$ we have $p'(x)=0$ and hence $p(x)=0$ as desired.

It remains to show \eqref{eq:claim}.
Let $\lambda\in\Lambda^+$ and $\varphi\in A_\zeta(\lambda)_\lambda$.
Then we have
\[
\Omega_1'(\varphi)=\sum_p(y^L_p\cdot\varphi) x_p\in A_\zeta U^+_\zeta,\qquad
\Omega_2'(\varphi)=\varphi k_{2\lambda}e(-2\lambda).
\]
Let us show
\begin{equation}
\label{eq:claim1}
\Omega_1'(\varphi)=\sum_p(y^L_p\cdot\varphi) x_p\in A_\zeta U_\zeta^+(\lambda).
\end{equation}
This is equivalent to
\[
\sum_p(y^L_p\cdot\varphi) \otimes\Phi_{-\lambda}(x_p)
\in
A_\zeta\otimes L_{+,\zeta}^*(-\lambda).
\]
This follows from 
\begin{align*}
&
\sum_p
\left\langle
\Phi_{-\lambda}(x_p),
\overline{
uf_i^{((\lambda,\alpha_i^\vee)+1)}
}
\right\rangle
y^L_p\cdot\varphi
=
\sum_p
\tau^{L}_\zeta\left(
x_p,
u
f_i^{((\lambda,\alpha_i^\vee)+1)}
\right)
y^L_p\cdot\varphi\\
=&(uf_i^{((\lambda,\alpha_i^\vee)+1)})\cdot\varphi=0
\end{align*}
for $u\in U_\zeta^{L,-}, i\in I$.
\eqref{eq:claim1} is verified.
Hence we have
\[
\Omega'(\varphi)k_{-2\lambda}\in\CK.
\]
It follows that
\begin{equation}
\label{eq:claim2}
F(\CK)\supset(k_{-2\lambda}-e(-2\lambda))U_{\zeta, f}\BC[\Lambda]
\qquad
(\lambda\in\Lambda^+).
\end{equation}
Now let $u\in \CI\cap (U_{\zeta,f}\otimes\BC[\Lambda])$.
If we can show that $k_{-2\mu}u\in F(\CK)$ for some $\mu\in\Lambda^+$, then we obtain
\[
u=e(2\mu)(e(-2\mu)-k_{-2\mu})u+
e(2\mu)k_{-2\mu}u\in F(\CK)
\]
by \eqref{eq:claim2}.
Hence it is sufficient to show that for any $u\in\CI$ there exists some $\mu\in\Lambda^+$ such that $k_{-2\mu}u\in F(\CK)$.
We may assume that there exists $\nu\in Q$ such that
$k_{-2\mu}u=\zeta^{(\mu,\nu)}uk_{-2\mu}$ for any $\mu\in\Lambda$.
Therefore, we have only to show that for any $u\in\CI$ there exists some $\mu\in\Lambda^+$ such that $uk_{-2\mu}\in F(\CK)$.
By Lemma \ref{lem:key-calculation} 
we can take $\varphi_i\in A_\zeta$, 
$x_i\in U_{\zeta,\diamondsuit}\otimes\BC[\Lambda]$ ($i=1,\dots, N$) such that 
\[
u=1\otimes\sum_{i=1}^N\Omega'(\varphi_i)x_i.
\]
By Lemma \ref{lem:localization}
we can take $\mu\in\Lambda^+$ such that 
$\Omega'(\varphi_i)x_ik_{-2\mu}\in A_\zeta\otimes U_{\zeta,f}\otimes\BC[\Lambda]$ for any $i$.
Then we have
\[
uk_{-2\mu}
=\sum_{i=1}^N
F(\Omega'(\varphi_i)x_ik_{-2\mu})\in
F(\CK).
\]
\end{proof}
By Lemma \ref{lem:key-calculation2}
we obtain an isomorphism
\[
\left(\Xi\circ\Phi^{eq}\right)(\omega^*D'_{\zeta,f})
\cong
V
\]
of vector spaces.
We need to show that it is in fact an isomorphism of left $C_\zeta^{\leqq0}$-comodules.
This is a consequence of the corresponding fact for $E_{\zeta,f}$.
Note that
we have
\[
\BC\otimes_{A_\zeta}E_{\zeta,f}
\cong U_{\zeta,f}\otimes\BC[\Lambda],
\]
and hence we have an isomorphism
\begin{equation}
\label{eq:ad-action}
\left(\Xi\circ\Phi^{eq}\right)(\omega^*E_{\zeta,f})
\cong
U_{\zeta,f}\otimes\BC[\Lambda]
\end{equation}
of vector spaces.
Hence we have only to show the following.
\begin{lemma}
Under the identification \eqref{eq:ad-action} the left $C_\zeta^{\leqq0}$-comodule structure of $U_{\zeta,f}\otimes\BC[\Lambda]$ is associated to the right $U_\zeta^{L,\leqq0}$-module structure given by
\[
(u\otimes e(\lambda))\cdot v
=
\ad(Sv)(u)\otimes e(\lambda)
\qquad
(u\in U_{\zeta,f}, \lambda\in\Lambda, v\in U_\zeta^{L,\leqq0}).
\]
\end{lemma}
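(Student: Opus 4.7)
The plan is to identify $W := U_{\zeta,f}\otimes\BC[\Lambda]$ with $\Xi\bigl(\Phi^{eq}(\omega^*E_{\zeta,f})\bigr)$ via an explicit section $\sigma$ of the canonical isomorphism $\Xi\bigl(\Phi^{eq}(\omega^*E_{\zeta,f})\bigr)\xrightarrow{\sim}\BC\otimes_{A_\zeta}E_{\zeta,f}\cong W$ of Remark~\ref{rem:fiber}, then to compute the $C_\zeta^{\leqq 0}$-coaction transported to $W$ through $\sigma$ and translate it into a right $U_\zeta^{L,\leqq 0}$-action via the Hopf pairing.

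Writing the right $C_\zeta$-coaction on $W$ induced from the left $\ad$-action as $\rho(w)=\sum w_{(0)}\otimes w_{(1)}$, I will take
\[
\sigma(w):=\sum S(w_{(1)})\otimes(1\otimes w_{(0)})\in C_\zeta\otimes_{A_\zeta}E_{\zeta,f},
\]
where $1\in A_\zeta(0)$ and $1\otimes w_{(0)}\in A_\zeta\otimes W = E_{\zeta,f}$. First I will check that $\sigma(w)$ lies in $\Xi\bigl(\Phi^{eq}(\omega^*E_{\zeta,f})\bigr)$, i.e.\ $\gamma(\sigma(w))=\sigma(w)\otimes1$, and that the composite of $\sigma$ with the natural map $\Xi\bigl(\Phi^{eq}(\omega^*E_{\zeta,f})\bigr)\to\BC\otimes_{A_\zeta}E_{\zeta,f}\cong W$ is the identity on $W$ (the latter being an immediate consequence of the counit axiom $\sum\varepsilon(w_{(1)})w_{(0)}=w$). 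The $\gamma$-invariance follows by expanding $\Delta(S(w_{(1)}))=\sum S((w_{(1)})_{(1)})\otimes S((w_{(1)})_{(0)})$, unwinding the $\gamma$-coaction $\gamma(f\otimes n)=\sum (f_{(0)}\otimes n_{(0)})\otimes f_{(1)}n_{(1)}$ of $\Phi^{eq}(\omega^*E_{\zeta,f})$, and invoking the coassociativity identity
\[
\sum(w_{(0)})_{(0)}\otimes(w_{(0)})_{(1)}\otimes w_{(1)}
=\sum w_{(0)}\otimes(w_{(1)})_{(0)}\otimes(w_{(1)})_{(1)}
\]
together with the antipode axioms in $C_\zeta$. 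Granted these two statements, $\sigma$ is an isomorphism of vector spaces onto $\Xi\bigl(\Phi^{eq}(\omega^*E_{\zeta,f})\bigr)$.

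Next I will apply the defining formula
\[
\beta(f\otimes n)=\sum\res(f_{(0)})\chi^{\leqq 0}_\mu\otimes(f_{(1)}\otimes n)\qquad(n\in E_{\zeta,f}(\mu))
\]
of the $C_\zeta^{\leqq 0}$-coaction on $\Phi^{eq}(\omega^*E_{\zeta,f})$ to $\sigma(w)$. Since $n=1\otimes w_{(0)}$ has $\Lambda$-degree $0$, we have $\chi^{\leqq 0}_0=1$, and a coassociativity manipulation entirely parallel to the $\gamma$-invariance check (using the same identity above together with $\Delta(S(c))=\sum S(c_{(1)})\otimes S(c_{(0)})$) yields
\[
\beta(\sigma(w))=\sum\res(S(w_{(1)}))\otimes\sigma(w_{(0)}),
\]
so that the transported left $C_\zeta^{\leqq 0}$-coaction on $W$ reads $\beta_W(w)=\sum\res(S(w_{(1)}))\otimes w_{(0)}$.

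Finally, the associated right $U_\zeta^{L,\leqq 0}$-action is recovered from the Hopf pairing as
\[
w\cdot v=\sum\langle\res(S(w_{(1)})),v\rangle\, w_{(0)}=\sum\langle w_{(1)},Sv\rangle\, w_{(0)}=\ad(Sv)(w),
\]
using $\langle S\varphi,u\rangle=\langle\varphi,Su\rangle$ and the relation $\ad(x)(w)=\sum\langle w_{(1)},x\rangle w_{(0)}$ between the left $\ad$-action on $W$ and its associated right $C_\zeta$-coaction. Since $\ad$ acts trivially on $\BC[\Lambda]$ (i.e.\ $\ad(x)(e(\lambda))=\varepsilon(x)e(\lambda)$), the coaction $\rho$ on $W$ factors as $\rho(u\otimes e(\lambda))=\sum(u_{(0)}\otimes e(\lambda))\otimes u_{(1)}$, so that $(u\otimes e(\lambda))\cdot v=\ad(Sv)(u)\otimes e(\lambda)$ as required. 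The main technical obstacle will be the careful coassociativity bookkeeping in the $\gamma$-invariance check and the $\beta$-computation, since both involve iterating the coaction twice and rearranging via the flip-antipode identity for $\Delta\circ S$; everything else reduces to standard Hopf algebra manipulations.
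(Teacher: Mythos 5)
Your overall plan is the same as the paper's: transport the comodule structure through an explicit identification of $\Xi\bigl(\Phi^{eq}(\omega^*E_{\zeta,f})\bigr)$ with $W=U_{\zeta,f}\otimes\BC[\Lambda]$. Indeed, the paper builds a linear isomorphism $\iota\colon C_\zeta\otimes M\to C_\zeta\otimes M^{triv}$, $\varphi\otimes m\mapsto\sum\varphi m_{(1)}\otimes\overline{m_{(0)}}$, compatible with both the right $C_\zeta$-coactions and the left $C_\zeta^{\leqq0}$-coactions, and then takes $\Xi$; your $\sigma$ is (up to a sign of the antipode) nothing but $\iota^{-1}$ restricted to $M^{triv}\subset C_\zeta\otimes M^{triv}$. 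So the approaches are conceptually equivalent.

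The gap is in the $\gamma$-invariance check, and it does not close. With $\sigma(w)=\sum S(w_{(1)})\otimes(1\otimes w_{(0)})$, expanding $\gamma$ on $C_\zeta\otimes_{A_\zeta}E_{\zeta,f}$ and using $\Delta(S c)=\sum S(c_{(2)})\otimes S(c_{(1)})$ together with coassociativity brings you to
\[
\gamma(\sigma(w))=\sum\bigl(S(w_{(3)})\otimes(1\otimes w_{(0)})\bigr)\otimes S(w_{(2)})w_{(1)} ,
\]
and the step you need is $\sum S(c_{(2)})c_{(1)}=\varepsilon(c)1$. That identity is \emph{false} in a non-cocommutative Hopf algebra such as $C_\zeta$: the valid antipode identities are $\sum S(c_{(1)})c_{(2)}=\varepsilon(c)1=\sum c_{(1)}S(c_{(2)})$, while the one with the tensor legs flipped requires the inverse antipode, $\sum S^{-1}(c_{(2)})c_{(1)}=\varepsilon(c)1$. (A one-line check in $U_\zeta$ — take $c$ with $\Delta(c)=c\otimes1+k\otimes c$, $S(c)=-k^{-1}c$ — already gives $\sum S(c_{(2)})c_{(1)}=(1-\zeta^{\pm2})c\ne\varepsilon(c)$.) So your $\sigma(w)$ does not, in general, lie in $\Xi\bigl(\Phi^{eq}(\omega^*E_{\zeta,f})\bigr)$, which means the subsequent transport of the $\beta$-coaction is through a map whose image is not where you say it is. The fact that the $\beta$-formula you write down \emph{formally} produces $\ad(Sv)$ is then not a proof that this is the transported action. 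To make a section-based argument work you must use the inverse antipode in $\sigma$, and then re-do the $\beta$ bookkeeping from scratch rather than by analogy; alternatively, follow the paper and work with the isomorphism $\iota$ onto $C_\zeta\otimes M^{triv}$ (with $M^{triv}$ carrying a twisted left $C_\zeta^{\leqq0}$-coaction), where the required compatibility is an identity of maps on all of $C_\zeta\otimes M$ and the troublesome invariance computation is absorbed into the verification that $\iota$ preserves the right $C_\zeta$-coactions.
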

\begin{proof}
Note that the left $C_\zeta^{\leqq0}$-comodule structure of
$U_{\zeta,f}\otimes\BC[\Lambda]$ is given by
\[
U_{\zeta,f}\otimes\BC[\Lambda]
\cong\Xi(C_\zeta\otimes (U_{\zeta,f}\otimes\BC[\Lambda])),
\]
where $C_\zeta\otimes (U_{\zeta,f}\otimes\BC[\Lambda])$ is regarded as a left $C_\zeta^{\leqq0}$-comodule by the tensor product of $C_\zeta$ (with left $C_\zeta^{\leqq0}$-comodule structure $(\res\otimes 1)\circ\Delta:C_\zeta\to C_\zeta^{\leqq0}\otimes C_\zeta$) and $U_{\zeta,f}\otimes\BC[\Lambda]$ with trivial left $C_\zeta^{\leqq0}$-comodule structure.
Hence it is sufficient to show that for a right $C_\zeta$-comodule $M$ 
the right $U_\zeta^{L,\leqq0}$-module structure of 
\[
M\cong\Xi(C_\zeta\otimes M)\in\Comod(C_\zeta^{\leqq0})
\]
is given by
\[
m\cdot v
=(Sv)\cdot m
\quad
(m\in M, 
v\in U_\zeta^{L,\leqq0}).
\]
Denote by $M^{triv}$ the trivial right $C_\zeta$-comodule which coincides with $M$ as a vector space. 
We denote by $M\ni m\leftrightarrow \overline{m}\in M^{triv}$ the canonical linear isomorphism.
We have $C_\zeta\otimes M^{triv}\in\Comod^r(C_\zeta)$ as the tensor product of $C_\zeta\in\Comod^r(C_\zeta)$ and $M^{triv}\in\Comod^r(C_\zeta)$. 
We can also define a left $C_\zeta^{\leqq0}$-comodule structure of $C_\zeta\otimes M^{triv}$ as the tensor product of the left $C_\zeta^{\leqq0}$-comodules $C_\zeta$ and $M^{triv}$ where the left $C_\zeta^{\leqq0}$-comodule structure of $M^{triv}$ is given by the right $U_\zeta^{L,\leqq0}$-module structure 
\[
\overline{m}\cdot v=\overline{(Sv)\cdot m}
\qquad
(m\in M, 
v\in U_\zeta^{L,\leqq0}).
\]
Then we have an linear isomorphism 
\[
C_\zeta\otimes M
\ni\varphi\otimes m
\mapsto\sum_{(m)}\varphi m_{(1)}\otimes \overline{m_{(0)}}
\in C_\zeta\otimes M^{triv}
\]
preserving the right $C_\zeta$-comodule structures and the left $C_\zeta^{\leqq0}$-comodule structures.
It follows that 
\[
\Xi(C_\zeta\otimes M)\cong
\Xi(C_\zeta\otimes M^{triv})=M^{triv}\in\Comod(C_\zeta^{\leqq0}).
\]

\end{proof}
The proof of Proposition \ref{prop:RiInd} is complete.

\bibliographystyle{unsrt}

\end{document}